\setlist{nolistsep}
\newcommand*\defbb[1]{
	\expandafter\newcommand\csname I#1\endcsname{\mathbb{#1}}}
\newcommand*\defbbs[1]{
	\@for\@i:=#1\do{\expandafter\defbb\expandafter{\@i}}}
\newcommand*\deffrak[1]{
	\expandafter\newcommand\csname frak#1\endcsname{\mathfrak{#1}}}
\newcommand*\deffraks[1]{
	\@for\@i:=#1\do{\expandafter\deffrak\expandafter{\@i}}}
\newcommand*\defcal[1]{
	\expandafter\newcommand\csname cal#1\endcsname{\mathcal{#1}}}
\newcommand*\defcals[1]{
	\@for\@i:=#1\do{\expandafter\defcal\expandafter{\@i}}}
\newcommand*\defopname[1]{
	\expandafter\newcommand\csname #1\endcsname{\operatorname{#1}}}
\newcommand*\defopnames[1]{
	\@for\@i:=#1\do{\expandafter\defopname\expandafter{\@i}}}
\DeclareMathOperator{\di}{Div}
\DeclareMathOperator{\spbdiv}{\operatorname{Div}_\partial^{\operatorname{sp}}}
\newcommand{\bs}{\ensuremath{\backslash}}
\theoremstyle{definition}
\newtheorem{defn}{Definition}[section]
\theoremstyle{plain}
\newtheorem{thm}[defn]{Theorem}
\newtheorem{lem}[defn]{Lemma}
\newtheorem{cor}[defn]{Corollary}
\newtheorem{prop}[defn]{Proposition}
\theoremstyle{remark}
\newtheorem{rem}[defn]{Remark}
\numberwithin{equation}{section}
\title{A converse theorem for Borcherds products in signature $(2,2)$}
\author{Patrick Bieker}
\address{
Department of Mathematics, Massachusetts Institute of Technology, 77 Massachusetts Ave, Cambridge, MA
02139
}
\email{patri323@mit.edu}
\author{Paul Kiefer}
\address{
Fakultät für Mathematik, Universität Bielefeld, D-33501 Bielefeld, Germany.
}
\email{paul.kiefer@uni-bielefeld.de}
\thanks{The authors are partially funded by the Deutsche Forschungsgemeinschaft (DFG, German Research Foundation) -- SFB-TRR 358/1 2023 -- 491392403. P.B. is supported by the Deutsche Forschungsgemeinschaft, project number 520675682.}
\begin{document}

\begin{abstract}
    We show that a modular unit on two copies of the upper half-plane is a Borcherds product if and only if its boundary divisor is a special boundary divisor. Therefore, we define a subspace of the space of invariant vectors for the Weil representation which maps surjectively onto the space of modular units that are Borcherds products. Moreover, we show that every boundary divisor of a Borcherds product can be obtained in this way. As a byproduct we obtain new identities of eta products.
\end{abstract}

\maketitle

{\parskip 0pt 
\tableofcontents}

	\section{Introduction}

    In his famous paper \cite{Borcherds}, Borcherds constructed a multiplicative map from the space of weakly holomorphic modular forms of weight $1 - \frac{n}{2}$ for the Weil representation associated to an even lattice $L$ of signature $(2, n)$ to meromorphic orthogonal modular forms for a certain subgroup $\Gamma(L)$ of the orthogonal group of $L$. These orthogonal modular forms, also called Borcherds products, have infinite product expansions, from which one can see that their divisors are supported on so called special divisors. Borcherds posed the question, when a meromorphic orthogonal modular form is a Borcherds product, see \cite[Problem 16.10]{Borcherds}.

    A first step into this direction was done in \cite{BruinierHabil}, where it was shown that if $n \geq 3$ and $L$ splits two hyperbolic planes over $\IZ$, then every meromorphic orthogonal modular form whose divisor is supported on special divisors is a Borcherds product. The result has been generalized to the case that $L$ splits a hyperbolic plane and a scaled hyperbolic plane over $\IZ$, see \cite{BruinierConverse}. Both results construct a Borcherds product that has the same divisor as the meromorphic orthogonal form using the weak converse theorem \cite[Theorem 4.23]{BruinierHabil} and then use the Koecher principle to show that they must be equal up to a constant factor. For lattices of signature $(2,1)$ or $(2,2)$ and Witt rank 2, neither the weak converse theorem nor the Koecher principle are available. The case $n = 1$ was considered in \cite{BruinierSchwagenscheidtConverse}, where a converse theorem using generalized Borcherds products was proven. The aim of the present paper is to tackle the problem for lattices of signature $(2,2)$ with Witt rank $2$. We will define a space of \emph{special boundary divisors} and show the following converse theorem.
    \begin{thm}[{see \thref{cor:ModularUnitsThatAreBorcherdsProducts}}]
        Let $F : \IH \times \IH \to \IC$ be a modular form of some weight and some character of finite order whose divisor is concentrated on the boundary, i.e. it is a modular unit. Then $F$ is a Borcherds product if and only if it has a special boundary divisor.
    \end{thm}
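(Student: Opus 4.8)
The plan is to prove the two implications separately. The forward direction (a Borcherds product is a modular unit with a special boundary divisor) I expect to be a direct consequence of the explicit product expansion together with the very definition of a special boundary divisor, while the converse I would reduce to a rigidity statement: a modular unit with empty divisor must be constant. Throughout I would lean on the two structural results announced in the abstract, namely that the distinguished subspace of invariant vectors for the Weil representation surjects onto the modular units that are Borcherds products, and that every boundary divisor occurring for a Borcherds product is realized in this way.

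For the forward direction, suppose $F$ is a Borcherds product, i.e.\ the regularized theta lift of some weakly holomorphic input. Since $F$ is assumed to be a modular unit, its interior (rational quadratic) divisor must be trivial, so the relevant negative Fourier coefficients of the input are supported only on the isotropic directions defining the cusps. Along each one-dimensional boundary component the product degenerates to a finite expression controlled by the Weyl chamber and its Weyl vector, and the order of vanishing of $F$ there is read off from the constant terms of the input attached to the corresponding rational isotropic line. As the notion of special boundary divisor was designed precisely to encode exactly this Weyl-vector data, I would simply assemble the local contributions at all cusps and match them against the definition; this direction is essentially the bookkeeping already packaged in the statement that every boundary divisor of a Borcherds product arises from the invariant-vector construction.

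For the converse, suppose the modular unit $F$ has a special boundary divisor $D$. By the surjectivity of the map from the distinguished subspace of Weil-invariant vectors, I would choose an input vector realizing $D$ and let $B$ be the associated Borcherds product; by construction $B$ is again a modular unit whose only divisor is the boundary divisor $D$. Thus $F$ and $B$ have identical divisors on $\IH \times \IH$ and identical boundary divisors. After checking that agreement of the boundary divisors forces the weights and the finite-order characters of $F$ and $B$ to coincide — the weight being pinned down by the constant term of the input, and hence by $D$, while a degree count on a compactification rules out a nonzero residual weight — the quotient $G = F/B$ is a holomorphic, nowhere-vanishing modular function of weight $0$ with a character of finite order and with totally empty divisor, both in the interior and along the boundary.

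It then remains to prove that such a $G$ is constant, and this is where I expect the main obstacle to lie, since in signature $(2,2)$ of Witt rank $2$ the Koecher principle is unavailable and one cannot simply argue by boundedness. Replacing $G$ by a suitable power we may assume the character is trivial; the key point is that, because $F$ and $B$ are modular units with the \emph{same} boundary divisor, the cancellation of the boundary data makes $G$ extend to a nowhere-vanishing holomorphic function on a smooth projective compactification $X$ of the modular surface (after passing to a finite cover to absorb the character), whence $G$ is constant by compactness. The delicate part is precisely this extension: one must verify from the definition of a modular unit that $G$ has no zeros, poles, or essential singularities along the boundary of $X$, the very holomorphicity that Koecher would otherwise supply for free. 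This is the step where the explicit structure of modular units on $\IH \times \IH$ — their description via $\eta$-type products — does the real work, and I would expect the eta-product identities mentioned in the abstract to emerge as a byproduct of carrying out this matching of boundary divisors explicitly.
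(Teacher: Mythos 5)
Your proposal follows essentially the same route as the paper: use the surjectivity onto $\spbdiv(X(L))_\IQ$ to produce a Borcherds product $\Psi_\frakv$ with the prescribed special boundary divisor, form the quotient $F/\Psi_\frakv$, and show that a modular form with empty divisor on $X(L)$ is constant. The only notable difference is that the rigidity step you flag as the main obstacle is dispatched in the paper by a two-line lemma (\thref{lem:trivial-divisor-constant}) --- since modular forms are by definition meromorphic along the boundary, a form with vanishing divisor on $X(L)$ has a holomorphic inverse of opposite weight, and the nonexistence of holomorphic forms of negative weight forces weight $0$ and constancy --- so no $\eta$-product identities are needed there (those arise instead from the linear relation among types in \thref{lem:space-types-relation}).
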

    We will now describe our results in more detail. Let $L = U(N') \oplus U(N)$ be the sum of two scaled hyperbolic planes and let $V = L \otimes \IQ$. 
    Then $L$ has signature $(2, 2)$ and Witt rank $2$. 
    For an easier exposition we will assume that $N = p^r, N' = p^{r'}$ are prime powers in the introduction. 
    Let $\rho_L$ be the Weil representation of the metaplectic group $\Mp_2(\IZ)$ on $\IC[L' / L]$. 
    The hermitian symmetric domain associated to the orthogonal group $O(V)$ can be identified with $\IH \times \IH$. Under this identification the discriminant kernel $\Gamma(L) = \ker(O(L) \to O(L' / L))$ is identified with a certain discrete subgroup of $\SL_2(\IZ) \times \SL_2(\IZ)$ in this setting. The Borcherds lift becomes a multiplicative map from weakly holomorphic modular forms $f : \IH \to \IC[L' / L]$ of weight $0$ with respect to the Weil representation $\rho_L$ to meromorphic modular forms $F : \IH \times \IH \to \IC$ with respect to $\Gamma(L)$. An important subspace of the space of weakly holomorphic modular forms of weight $0$ is the space of invariant vectors $\IC[L' / L]^{\Mp_2(\IZ)}$ which has already been studied in \cite{EhlenSkoruppa, Bieker, Zemel2021, ManuelInvariants}. Under the Borcherds lift, these map to modular units, i.e. holomorphic modular forms whose divisor is concentrated on the boundary 
    of the Bailey-Borel compactification $X(L)$ of the quotient $Y(L) = \Gamma(L) \setminus \IH \times \IH$.
    In fact the boundary divisor of a Borcherds product is completely determined by an invariant vector, see \thref{lem:boundary-mult-weyl-vect}. In particular, to study the boundary divisors of Borcherds lifts, it is sufficient to study the Borcherds lifts of invariant vectors.

    In our setting the boundary of the Bailey-Borel compactification can be described explicitly as follows.
    The boundary of $\IH \times \IH$ consists of $0$-dimensional cusps given by elements in $\IP^1(\IQ) \times \IP^1(\IQ)$ and $1$-dimensional cusps isomorphic to $\{a / c\} \times \IH,\ \IH \times \{a / c\}$ for $a / c \in \IP^1(\IQ)$ where $a,c$ are integers with $\gcd(a, c) = 1$ and $c \geq 0$. These $1$-dimensional boundary components are in $1$-$1$-correspondence with rational $2$-dimensional isotropic subspaces $I_{a, c}^1, I_{a, c}^2 \subseteq V$. We show that the image of $I_{a, c}^* \cap L', *=1,2$ in $L' / L$ is a self-dual isotropic subgroup $H_{a, c}^* = \type_L(I_{a, c})$ and call this the type of $I_{a, c}^*$ following \cite{DriscollSpittlerScheithauerWilhelm}. This yields a subspace $\IC[L' / L]_{\Types(L)} \subseteq \IC[L' / L]^{\Mp_2(\IZ)}$ generated by the characteristic functions of the types $H_{a, c}^*$ of rational $2$-dimensional isotropic subspaces $I_{a, c}^*$. We obtain the following

    \begin{prop}[{see \thref{lem:space-types-relation} and \thref{lem:space-types-dimension}}]
        We have
            \begin{align*}
                 & \sum_{a \in p \IZ/p^{r'} \IZ} v^{H^1_{a,1}}  +\sum_{c \in p^{r-r'}\IZ/p^{r}\IZ} v^{H^1_{1,c}} +  \sum_{ \substack{ u \in (\IZ/p^{r'}\IZ)^\times \\ 0 \leq s \leq r-r' }}  v^{H^1_{1, up^s}} \\
                & = \sum_{a \in p \IZ/p^{r'} \IZ} v^{H^2_{a,1}}  +\sum_{c \in p^{r-r'}\IZ/p^{r}\IZ} v^{H^2_{1,c}} +  \sum_{ \substack{ u \in (\IZ/p^{r'}\IZ)^\times \\ 0 \leq s \leq r-r' }}   v^{H^2_{1, up^s}}
            \end{align*}            
            and this is up to multiplication by scalars the only non-trivial linear relation among the characteristic functions of types of one-dimensional cusps.
    \end{prop}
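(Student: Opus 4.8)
The plan is to reduce both assertions to explicit computations in the discriminant group $L'/L \cong (\IZ/N'\IZ)^2 \oplus (\IZ/N\IZ)^2$, equipped with the discriminant form $q(a_1,b_1,a_2,b_2) = \tfrac{a_1b_1}{N'} + \tfrac{a_2b_2}{N} \bmod \IZ$. First I would make the types completely explicit. Starting from the parametrization of the $1$-dimensional cusps by $a/c \in \IP^1(\IQ)$, I would write down generators of the rational isotropic planes $I^1_{a,c}, I^2_{a,c} \subseteq V$, intersect them with $L'$, and reduce modulo $L$ to obtain the self-dual isotropic subgroups $H^1_{a,c} = \type_L(I^1_{a,c})$ and $H^2_{a,c}$. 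Since $N = p^r$ and $N' = p^{r'}$, each resulting subgroup is determined by a short list of congruences in $a$ and $c$, and I expect the three index families appearing in the statement (the $a/1$ family, the $1/c$ family, and the $1/up^s$ family) to be exactly the cases singled out by the $p$-adic valuations of $a$ and $c$.

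With explicit types in hand, the identity of \thref{lem:space-types-relation} becomes a finite verification, and the point is that it is genuinely non-trivial: the multisets of subgroups occurring on the two sides are different, but the characteristic functions $v^{H}$ of distinct self-dual isotropic subgroups satisfy inclusion--exclusion relations, so the two different sums can still coincide. I would prove the equality by expanding both sides in the standard basis $\{\mathfrak{e}_\gamma\}_{\gamma \in L'/L}$: the coefficient of $\mathfrak{e}_\gamma$ on either side is the number of cusps in the corresponding family whose type contains $\gamma$, so the identity is equivalent to the combinatorial statement that every $\gamma \in L'/L$ is covered equally often by the type-$1$ and the type-$2$ families. The $\IQ$-isometry of $V$ swapping the two hyperbolic planes interchanges type-$1$ and type-$2$ isotropic subspaces and explains heuristically why such a coincidence should hold; because it does not preserve $L$, however, the matching of coverings has to be checked on the nose from the explicit description of the $H^*_{a,c}$.

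For the uniqueness statement I would argue by a dimension count. The generators in question are the distinct vectors $v^{H}$ as $H$ ranges over all types of $1$-dimensional cusps, and since $H$ is recovered as the support of $v^H$, counting them amounts to counting the distinct subgroups produced by the three families. Comparing this count with the dimension of their span $\IC[L'/L]_{\Types(L)}$, computed in \thref{lem:space-types-dimension}, the space of linear relations has dimension equal to the difference. The plan is to show that this difference equals $1$; together with the explicit relation from the previous paragraph this forces it to be the only relation up to scalars.

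I expect the uniqueness half to be the main obstacle. Exhibiting one relation is a bounded computation, but ruling out all further relations requires controlling the entire span $\IC[L'/L]_{\Types(L)} \subseteq \IC[L'/L]$, i.e.\ determining $\dim \IC[L'/L]_{\Types(L)}$ exactly from the lattice of self-dual isotropic subgroups of $(\IZ/p^{r'}\IZ)^2 \oplus (\IZ/p^{r}\IZ)^2$. This is precisely the content that the reduction to prime powers in the introduction is designed to make tractable; the general case of arbitrary $N, N'$ should then follow by decomposing the discriminant group into its $p$-primary parts and taking the corresponding product decomposition of the relations.
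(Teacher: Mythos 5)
Your verification of the relation itself follows the paper's method exactly: after making the types $H^*_{a,c}$ explicit (this is \thref{lem:isotropic-subgroups-cusps}), the paper checks that every $\gamma \in L'/L$ is contained in as many subgroups from the left-hand families as from the right-hand ones, which is precisely your ``coefficient of $\frake_\gamma$'' computation. That half is sound, modulo carrying out the case distinctions on the $p$-adic valuations of the coordinates of $\gamma$.

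The uniqueness half has a genuine gap, and as proposed it is circular. You want to count the distinct types, compare with $\dim \IC[L'/L]_{\Types(L)}$ from \thref{lem:space-types-dimension}, and conclude that the space of relations is one-dimensional. But in the paper \thref{lem:space-types-dimension} is itself \emph{deduced} from the uniqueness assertion of \thref{lem:space-types-relation}: the dimension there is obtained as (number of types) minus (number of independent relations). So you cannot feed it back in without an independent computation of the rank of $\{v^H\}_{H \in \Types(L)}$, which is exactly the uniqueness question restated; you flag this as ``the main obstacle'' but do not supply the missing argument. The paper's actual mechanism is different and non-circular: in the course of verifying the relation one shows that for every pair consisting of one subgroup $H^1_{*,*}$ from the left-hand side and one subgroup $H^2_{*,*}$ from the right-hand side, there is an element of $L'/L$ lying in precisely these two types and in no other type. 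Evaluating an arbitrary relation $\sum_H \lambda_H v^H = 0$ at such elements forces $\lambda_{H^1} = -\lambda_{H^2}$ for every such pair, hence all coefficients attached to type-$1$ subgroups coincide and are the negatives of all coefficients attached to type-$2$ subgroups; the relation space is therefore at most one-dimensional. This separating-element argument is the idea your proposal is missing, and it is also what makes the dimension formula in \thref{lem:space-types-dimension} available afterwards.
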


    As a corollary one can calculate the dimension of the space $\IC[L' / L]_{\Types(L)}$, see \thref{lem:space-types-dimension}. Moreover, since according to \cite[Proposition 5.1]{Bieker} the Borcherds products of invariant vectors are eta products, one obtains an identity of eta products which generalizes the identity found in \cite[Corollary 5.6]{Bieker}, see \thref{rem:EtaProductIdentity}.
    
    For a self-dual isotropic subgroup $H \subseteq L' / L$ let the special boundary divisor corresponding to $H$ be defined by
    $$Z(H) = \sum_{S} \#(H \cap \type_L(S)) [S],$$
    where the sum runs over all one-dimensional cusps of the compactified modular surface $X(L)$. We denote the space generated by these special boundary divisors by $\spbdiv(X(L))$. From the explicit description of the Borcherds lift of an invariant vector we obtain

    \begin{prop}[{see \thref{prop:boundary-isotropic-descent},  \thref{lem:boundary-mult-weyl-vect}}]
        The map
        $$\IQ[L' / L]^{\Mp_2(\IZ)} \to \spbdiv(X(L))_{\IQ}$$
        given by sending an invariant vector to the boundary divisor of its Borcherds product restricts to an isomorphism
        $$\IQ[L' / L]_{\Types(L)} \to \spbdiv(X(L))_{\IQ}.$$
        Under this map, the characteristic function of a self-dual isotropic subgroup $H$ is mapped to $Z(H)$.
    \end{prop}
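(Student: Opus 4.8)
The plan is to establish the proposition in three stages: first understand the map on generators, then prove surjectivity, and finally deduce injectivity after an appropriate restriction.

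First I would make the target computation explicit on the distinguished generators. By \thref{lem:boundary-mult-weyl-vect}, the boundary divisor of the Borcherds product of an invariant vector is determined by that vector, so I would compute the image of the characteristic function $v^H$ of a self-dual isotropic subgroup $H \subseteq L'/L$. The key observation is that each one-dimensional cusp $S$ carries its type $\type_L(S)$, and the local multiplicity of the boundary divisor along $S$ should be read off from how $v^H$ pairs against the Weyl chamber data attached to $S$. I expect this pairing to produce exactly the count $\#(H \cap \type_L(S))$, matching the definition of $Z(H)$; this is the content of the assertion that $v^H \mapsto Z(H)$, and once established it pins down the map on all of $\IQ[L'/L]_{\Types(L)}$ since these characteristic functions span that space. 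Concretely, I would invoke \thref{prop:boundary-isotropic-descent} to reduce the boundary multiplicity computation to a local computation at each isotropic subspace $I_{a,c}^*$ and its type, and then a direct count gives the intersection number.

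Next I would prove surjectivity onto $\spbdiv(X(L))_{\IQ}$. By definition $\spbdiv(X(L))$ is generated by the special boundary divisors $Z(H)$ as $H$ ranges over self-dual isotropic subgroups, and the previous step shows every such $Z(H)$ is hit by $v^H \in \IQ[L'/L]_{\Types(L)}$. Hence surjectivity is immediate from the definition of the target together with the generator computation; no further work is needed here beyond observing that the $v^H$ indeed lie in $\IC[L'/L]_{\Types(L)}$, which is how that space was defined in the introduction (it is generated by the characteristic functions of the types of rational two-dimensional isotropic subspaces).

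For injectivity I would compare dimensions. The space $\IQ[L'/L]_{\Types(L)}$ has a known dimension from the relations analysis: \thref{lem:space-types-relation} gives the single nontrivial linear relation among the characteristic functions of types of one-dimensional cusps, and \thref{lem:space-types-dimension} records the resulting dimension. I would show that $\spbdiv(X(L))_{\IQ}$ has the same dimension, so that the surjective map between equidimensional spaces is automatically an isomorphism. The main obstacle is precisely here: one must verify that the only relations among the divisors $Z(H)$ correspond exactly to the relation in \thref{lem:space-types-relation}, i.e. that the map does not collapse more than expected. I would handle this by showing the relation of \thref{lem:space-types-relation} maps to the relation $\sum_S [S] = \sum_S [S]$ summed over the two families of cusps (the two sides of the relation involve the types $H^1$ and $H^2$ of the two families $I_{a,c}^1, I_{a,c}^2$), which is the natural relation among boundary divisors, and that no further relations appear because the intersection-number matrix $\#(H \cap \type_L(S))$ has the expected rank. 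This rank computation, carried out using the explicit structure of the types $H_{a,c}^*$ from the prime-power description, is the technical heart of the argument; once it matches the dimension count from \thref{lem:space-types-dimension}, injectivity and hence the isomorphism follow.
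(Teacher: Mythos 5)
Your first two steps are sound and match the paper: the computation $v^H \mapsto Z(H)$ is exactly part (3) of \thref{lem:boundary-mult-weyl-vect} (the order along the cusp of type $\type_L(I)$ is $\downarrow^L_{\type_L(I)}\frakv^H = \#(H\cap\type_L(I))$), and surjectivity is then immediate because $\spbdiv(X(L))_\IQ$ is \emph{defined} as the span of the $Z(H)$. The problem is your third step. You correctly identify that injectivity amounts to showing the matrix $\bigl(\#(H\cap\type_L(S))\bigr)$ has rank equal to $\dim\IQ[L'/L]_{\Types(L)}$, but you then defer this to an unexecuted ``rank computation, carried out using the explicit structure of the types $H^*_{a,c}$ from the prime-power description.'' As written, the key step is asserted rather than proved, and the route you sketch (matching against the dimension count of \thref{lem:space-types-dimension}) would force you to redo, on the divisor side, the entire case analysis of \thref{lem:space-types-relation} -- and note that \thref{lem:space-types-relation} is only stated for $L_{N,N'}$ with $N$ a prime power, whereas the proposition is for general $L$ of signature $(2,2)$ and Witt rank $2$, so your dimension comparison does not even have both inputs available in general.

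The paper closes this gap with no computation at all, by observing that the map you are studying is the zero-component of the product of isotropic descents $\prod_{H\in\Types(L)}(\downarrow^{L'/L}_H-)_0$, while $\IQ[L'/L]_{\Types(L)}$ is by construction the image of the adjoint map, the sum of isotropic inductions $\bigoplus_H(\uparrow^{L'/L}_H-)$. For a pair of adjoint maps with respect to a positive-definite inner product, the first map restricts to an isomorphism from the image of the second onto its own image. Concretely, your matrix $\#(H\cap H') = \langle v^H, v^{H'}\rangle$ is the Gram matrix of the vectors $v^H$ for the standard inner product on $\IQ[L'/L]$, so its rank is automatically $\dim\operatorname{span}\{v^H\} = \dim\IQ[L'/L]_{\Types(L)}$; positive-definiteness does the work that you were planning to do by hand. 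If you replace your deferred rank computation with this observation, your argument becomes complete and essentially coincides with the paper's proof of \thref{prop:boundary-isotropic-descent}.
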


    The proposition makes it possible to explicitly determine the boundary divisors of Borcherds products.

    \begin{thm}[{see \thref{thm:SpecialBoundaryDivisorsAsBorcherdsProducts}}]
        Let $f \in M_{0, L}^!$ and denote by $\Psi_f$ the corresponding Borcherds product. Then the boundary divisor of $\Psi_f(Z)$ on $X(L)$ is special and every special boundary divisor is the boundary divisor of $\Psi_\frakv(Z)$ for some $\frakv \in \IC[L' / L]_{\Types(L)}$.
        The space of special boundary divisors has dimension 
        $$\dim \spbdiv(X(L))_\IQ = \left(2((r-r'+1)p^{r'} - (r-r'-1) p^{r' - 1}) - 1\right) (r+1).$$
    \end{thm}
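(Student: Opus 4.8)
The plan is to assemble the three assertions from the two structural propositions proved above, treating each in turn. Throughout I rely on \thref{lem:boundary-mult-weyl-vect}, which says that the boundary divisor of $\Psi_f$ on $X(L)$ depends only on the invariant vector $\frakv_f \in \IC[L'/L]^{\Mp_2(\IZ)}$ that $f$ determines through the constant terms governing the Weyl chambers at the one-dimensional cusps.

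For the first assertion I would argue as follows. Given $f \in M_{0,L}^!$, extract the associated invariant vector $\frakv_f$ and apply \thref{prop:boundary-isotropic-descent}: the map $\IQ[L'/L]^{\Mp_2(\IZ)} \to \spbdiv(X(L))_\IQ$ sends every invariant vector into the space of special boundary divisors, and in particular it sends $\frakv_f$ to the boundary divisor of $\Psi_f$. Hence this boundary divisor lies in $\spbdiv(X(L))$ and is special, which is exactly what is claimed. The only point requiring care is that the boundary-divisor map is genuinely well defined on all of $\IQ[L'/L]^{\Mp_2(\IZ)}$ and agrees with the explicit Borcherds-lift formula; this is precisely the content of \thref{lem:boundary-mult-weyl-vect}, which I may assume.

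For the second assertion I would invoke the surjectivity half of \thref{prop:boundary-isotropic-descent}: since the restriction $\IQ[L'/L]_{\Types(L)} \to \spbdiv(X(L))_\IQ$ is an isomorphism carrying $v^H$ to $Z(H)$, every special boundary divisor is the image of some $\frakv \in \IQ[L'/L]_{\Types(L)} \subseteq \IC[L'/L]_{\Types(L)}$, i.e. equals the boundary divisor of $\Psi_\frakv$. Because the boundary-divisor map is additive (one has $\Psi_{\frakv_1+\frakv_2} = \Psi_{\frakv_1}\Psi_{\frakv_2}$ up to a constant, and boundary divisors add), an integral special divisor $\sum_H n_H Z(H)$ is realized by the integral vector $\frakv = \sum_H n_H v^H$, while fractional coefficients are accommodated by passing to a suitable root, which is permissible since we allow a character of finite order.

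For the dimension formula I would transport dimensions across the isomorphism of \thref{prop:boundary-isotropic-descent}, giving $\dim_\IQ \spbdiv(X(L))_\IQ = \dim_\IQ \IQ[L'/L]_{\Types(L)}$. The space on the right is spanned by the characteristic functions $v^{H^*_{a,c}}$ of the types of the one-dimensional cusps, so its dimension equals the number of distinct such types minus the rank of the module of linear relations among them; by \thref{lem:space-types-relation} this rank is exactly $1$. It remains to count the distinct types: organizing the cusps $a/c \in \IP^1(\IQ)$ by the $p$-adic valuations of $a$ and $c$, reducing modulo the congruences that determine $\type_L$, and accounting for coincidences between the two families $\ast = 1, 2$ yields the count, whence subtracting the single relation produces $\left(2((r-r'+1)p^{r'} - (r-r'-1)p^{r'-1}) - 1\right)(r+1)$. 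This enumeration, the combinatorics already carried out in \thref{lem:space-types-dimension}, is the genuine obstacle: the first two assertions are formal consequences of the propositions, whereas pinning down precisely when two cusps share a type, and assembling the resulting count into closed form, is where all the work lies.
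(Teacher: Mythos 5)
Your proposal is correct and follows essentially the same route as the paper, which states the theorem as a direct reformulation of \thref{prop:boundary-isotropic-descent} together with \thref{lem:boundary-mult-weyl-vect} and the dimension count of \thref{lem:space-types-dimension}. You correctly identify that the first two assertions are formal consequences of those propositions and that the only substantive input is the enumeration of types and the single linear relation, which you cite rather than reprove, just as the paper does.
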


    In fact, we show in \thref{lem:boundary-mult-weyl-vect} that the boundary divisor of a Borcherds product only depends on a certain invariant vector whose corresponding Borcherds product has the same boundary divisor. This yields the previously mentioned converse theorem.

    We remark that there remain two obstacles for a full converse theorem. The first one is the weak converse theorem as in \cite[Theorem 4.23]{BruinierHabil}, which is only proven for signature $(2, n), n \geq 3$. We expect that such a theorem is still true also in our case. Then the proof of \cite[Theorem 4.2]{BruinierConverse} carries over to our situation so that the full converse theorem is equivalent to the injectivity of the Kudla--Millson lift. The second obstacle is that this injectivity is only known for $L = U \oplus U(N)$ so that even with the weak converse theorem, one only obtains the full converse theorem in these cases.
    
	\subsection*{Acknowledgements}

    We would like to thank Claudia Alfes, Jan Bruinier and Ma\-nu\-el Müller for helpful discussions.
	
	\section{Preliminaries}
	
	We briefly recall some facts on lattices, the Weil representation and associated modular forms. For more details also compare \cite{BruinierHabil, ScheithauerWeilRep}.
 
	\subsection{Vector valued modular forms for the Weil representation}

    Let $L$ be an even lattice, i.e. a finitely generated free $\IZ$-module with a non-degenerate quadratic form $q : L \to \IZ$. Then its discriminant group $L' / L$ is a so called \emph{discriminant form}, i.e. it is a finite abeleian group with a non-degenerate quadratic form $q \colon L' / L \to \IQ/\IZ$. We denote by $L^-$ the lattice with the same underlying $\IZ$-module and quadratic form given by $-q$. Then the discriminant form $(L' / L)^-$ of $L^-$ is given by the same underlying abelian group as $L' / L$ together with quadratic form $-q$. 
	Recall that the Weil representation $\rho_{L}$ is a representation of the metaplectic double cover $\Mp_2(\IZ)$ of $\SL_2(\IZ)$ on the group algebra $\IC[L' / L]$, see for example \cite[Section 1.1]{BruinierHabil}. We denote the standard basis of $\IC[L' / L]$ by $\frake_\gamma$ for $\gamma \in L' / L$. 
    We denote by $\IC[L' / L]^{\Mp_2(\IZ)}$ the space of invariants under $\rho_L$.
	
	Let $k \in \frac 1 2 \IZ$. A holomorphic function $f \colon \IH \to \IC[L' / L]$ is a \emph{weakly holomorphic modular form of weight $k$ for $\rho_L$} if $f(M\tau) = \phi^{2k}(\tau) \rho_L(M, \phi) f(\tau)$
	for all $(M, \phi) \in \Mp_2(\IZ)$ and $f$ is meromorphic at $\infty$. If $f$ is holomorphic at $\infty$ (respectively vanishes at $\infty$), $f$ is called a \emph{holomorphic modular form} (respectively \emph{cusp form}) of weight $k$ for $\rho_L$.
	We denote by $M_{k,L}^!$ (respectively $M_{k,L}$ and $S_{k,L}$) the space of weakly holomorphic modular forms (respectively holomorpic modular forms and cusp forms) of weight $k$ for $\rho_L$. 

    Let
    $$\Delta_k = -v^2 \left(\frac{\partial^2}{\partial u^2} + \frac{\partial^2}{\partial v^2}\right) + ikv \left(\frac{\partial}{\partial} + i\frac{\partial}{\partial v}\right), \quad \tau = u+iv \in \IH$$
	be the weight $k$ Laplace operator on $\IH$. Following \cite{BruinierFunke} a \emph{harmonic weak Maass form} of weight $k$ for $\rho_L$ is a real analytic function $f \colon \IH \to \IC[L' / L]$ that has the transformation property of modular forms, satisfies $\Delta_k f = 0$ and has at most linear exponential growth at $\infty$. 
	Let $\xi_k$ be the linear differential operator on harmonic Maass forms given by $\xi_k(f) = 2i v \overline{\frac{\partial}{\partial \bar \tau} f}$. Then, by the linear exponential growth, $\xi_k(f)$ is a weakly holomorphic modular form of weight $2-k$ for the dual Weil representation $\rho_L^* = \rho_{L^-}$ of $\rho_L$. 
	We denote by $H_{k,L}$ the space of harmonic weak Maass forms of weight $k$ for $\rho_L$ such that $\xi_k(f)$ is a cusp form.
	Note that the map $\xi_k \colon H_{k,L} \to S_{2-k, L^-}$ is surjective with kernel $M_{k,L}^!$, see \cite[Corollary 3.8]{BruinierFunke}. Every harmonic weak Maass form $f \in H_{k,L}$ has a Fourier expansion of the form
    \begin{align*}
        f(\tau)
        &= \sum_{\gamma \in L' / L} \sum_{\substack{n \in \IQ \\ n \gg -\infty}} c^+(\gamma, n) e(n \tau) + \sum_{\gamma \in L' / L} \sum_{\substack{n \in \IQ \\ n < 0}} c^-(\gamma, n) \Gamma(1 - k, -4 \pi n v) e(n \tau),
    \end{align*}
    where $e(x) = e^{2 \pi i x}$. The finite sum
    $$P(f) = \sum_{\gamma \in L' / L} \sum_{n \leq 0} c^+(\gamma, n) e(n \tau)$$
    is called the principal part. Examples of harmonic weak Maass forms are given by the Hejhal Poincar\'e series $F_{\beta, m}(\tau, 1 - k/2)$, see \cite[Section 1.3]{BruinierHabil}. For $k = 0$ these have to be defined using Hecke summation and we have the following

    \begin{lem}\label{lem:HarmonicMassFormDecomposition}
        Let $f \in H_{0, L}$ with principal part
        $$P(f) = \sum_{\gamma \in L' / L} \sum_{n \leq 0} c^+(\gamma, n) e(n \tau).$$
        Then there exists an invariant vector $\frakv_f \in \IC[L' / L]^{\SL_2(\IZ)}$ such that
        $$f(\tau) = \frakv_f + \sum_{\gamma \in L' / L} \sum_{n < 0} c^+(\gamma, n) F_{\beta, m}(\tau, 1).$$
    \end{lem}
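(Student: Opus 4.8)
The plan is to remove the entire negative part of the principal part of $f$ using Hejhal Poincar\'e series and then to show that the remaining harmonic Maass form is forced to be an invariant vector. Recall that for each $\gamma \in L'/L$ and each $n < 0$ with $n \equiv q(\gamma) \bmod \IZ$ the weight-$0$ Hejhal Poincar\'e series $F_{\gamma, n}(\tau, 1)$, defined via Hecke summation, lies in $H_{0,L}$, has principal part $e(n\tau)(\frake_\gamma + \frake_{-\gamma})$ up to the standard normalizing constant, and has the property that $\xi_0 F_{\gamma,n}$ is a cusp form in $S_{2, L^-}$ (being a multiple of a weight-$2$ Poincar\'e series of strictly positive index $-n$). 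First I would subtract these off: setting
$$g = f - \sum_{\gamma \in L'/L} \sum_{n < 0} c^+(\gamma, n)\, F_{\gamma, n}(\tau, 1),$$
one obtains $g \in H_{0,L}$ whose principal part contains only the constant term, because the negative-index coefficients of $g$ cancel exactly against those of $f$. Here I use that $c^+(\gamma,n) = c^+(-\gamma,n)$, which holds since $\rho_L(Z)$ acts on $f$, so that the symmetric combinations $\frake_\gamma+\frake_{-\gamma}$ suffice to match the principal part.

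Next I would show that $\xi_0 g = 0$. Since $g \in H_{0,L}$, its image $\xi_0 g$ is a cusp form in $S_{2,L^-}$, and by the Bruinier--Funke pairing \cite[Proposition 3.5]{BruinierFunke} we have, with $c^+_g(\gamma,n)$ the holomorphic coefficients of $g$ and $b(\gamma, m)$ the Fourier coefficients of $\xi_0 g$,
$$\lVert \xi_0 g \rVert^2 = (\xi_0 g, \xi_0 g) = \{g, \xi_0 g\} = \sum_{\gamma \in L'/L} \sum_{n \le 0} c^+_g(\gamma, n)\, b(\gamma, -n).$$
As $\xi_0 g$ is cuspidal we have $b(\gamma, m) = 0$ for every $m \le 0$, so only indices with $-n > 0$, i.e.\ $n < 0$, could contribute; but $g$ has no negative-index terms, i.e.\ $c^+_g(\gamma, n) = 0$ for $n < 0$. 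Hence every summand vanishes and $\xi_0 g = 0$.

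Finally, $\xi_0 g = 0$ places $g$ in $\ker(\xi_0) = M_{0,L}^!$ by \cite[Corollary 3.8]{BruinierFunke}. Since the principal part of $g$ consists only of the constant term, $g$ is holomorphic at $\infty$, whence $g \in M_{0,L}$; and a holomorphic modular form of weight $0$ for $\rho_L$ is a constant invariant vector, so $M_{0,L} = \IC[L'/L]^{\SL_2(\IZ)}$. Setting $\frakv_f = g$ yields the asserted decomposition. I expect the only genuinely delicate point to be the weight-$0$ input: one must verify that the Hecke-summed series $F_{\gamma,n}(\tau,1)$ are honest harmonic weak Maass forms in $H_{0,L}$ with the stated principal parts and cuspidal $\xi_0$-images, the convergence at $s=1$ being exactly the place where Hecke summation is needed. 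Granting the standard theory of these series, the remainder is the formal pairing computation above, and the sign or complex-conjugation conventions in the pairing are immaterial since each surviving factor already vanishes.
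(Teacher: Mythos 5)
Your proof is correct and takes essentially the same route as the paper: subtract the Hejhal Poincar\'e series $F_{\beta,m}(\tau,1)$ to obtain a harmonic weak Maass form with constant principal part, then identify it as an invariant vector. The only difference is that where the paper simply cites Bruinier--Yang, Lemma 3.3, for this last step, you reprove it via the Bruinier--Funke pairing (showing $\xi_0 g=0$, hence $g\in M^!_{0,L}$ and then $g\in M_{0,L}=\IC[L'/L]^{\SL_2(\IZ)}$); that is the standard argument and is sound, modulo the normalization of the principal part of $F_{\beta,m}$, which you correctly flag.
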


    \begin{proof}
        The difference
        $$f(\tau) - \sum_{\gamma \in L' / L} \sum_{n < 0} c^+(\gamma, n) F_{\beta, m}(\tau, 1)$$
        is a harmonic weak Maass form with constant principal part, hence is a holomorphic modular form of weight $0$ according to \cite[Lemma 3.3]{BruinierYang}, i.e. an invariant vector $\frakv_f \in \IC[L' / L]^{\SL_2(\IZ)}$.
    \end{proof}

    For an isotropic subgroup $H$ of $L' / L$ the lattice $L_H = L + H$ is also an even lattice with dual lattice $L_H' = L + H^\perp$. The corresponding discriminant form is $L_H' / L_H = H^\perp / H$. Conversely, when $\tilde L \subset L$ is a sublattice of finite index the quotient $H = L/\tilde L$ is an isotropic subgroup of $\tilde L' / \tilde L$ with orthogonal complement $H^\bot = L'/\tilde L$ and $H^\bot/H \cong L'/L$.
    
    Recall that for an isotropic subgroup $H \subseteq L' / L$ we have a pair of operators, compare \cite[Section 5.1]{BruinierHabil} and \cite[Section 4]{ScheithauerConstructions}, the \emph{isotropic induction}
    $$ \uparrow_H^L \colon  \IC[H^{\perp}/H] \to \IC[L' / L], \qquad \uparrow_H^L(\frake_{\gamma+H}) = \sum_{\gamma' \in H} \frake_{\gamma + \gamma'}   $$
    for $\gamma\in H^{\perp}$ and \emph{isotropic descent}
    $$ \downarrow_H^L \colon \IC[L' / L] \to \IC[H^{\perp}/H], \qquad \downarrow_H^L(\frake_{\gamma}) = 
    \begin{cases}
     \frake_{\gamma+H} & \text{if $\gamma\in H^{\perp}$}, \\
              0 & \text{otherwise}.
    \end{cases} 
    $$
    The two operators commute with the Weil representation and therefore extend to maps $\uparrow_H^L \colon H_{k,L_H} \to H_{k,L}$ and $\downarrow^L_H \colon H_{k,L} \to H_{k, L_H}$. In particular, the two operators map invariant vectors to invariant vectors.
    Moreover, they are adjoint for the Petterson inner product (which restricts to the standard inner product on invariants up to a constant). If $\tilde{L} \subseteq L$ is a sublattice of finite index with corresponding isotropic subgroup $H$, we also write $\uparrow_L^{\tilde L}$ and $\downarrow_L^{\tilde L}$.
    
	\subsection{The lattice \texorpdfstring{$L = U(N) \oplus U(N')$}{L = U(N) + U(N')}}
	   
	For a lattice $(L,q)$ and positive integer $N$ we denote by $L(N)$ the lattice with rescaled quadratic form $Nq$. Let $U = \IZ^2$ with $q(x, y) = xy$ be the hyperbolic plane.
    The main example we consider below is the orthogonal sum of two rescaled hyperbolic planes $L = L_{N,N'} = U(N) \oplus U(N')$ for two positive integers $N, N'$ such that $N' \mid N$.
	A model for this lattice is given by
	\begin{align*}
		L = \left\{ \begin{pmatrix} a & b \\ c & d \end{pmatrix} \in \M_2(\IZ) \bigg\vert \frac{N}{N'} \mid c \right\}
	\end{align*}
	with quadratic form $q(X) = -N' \det(X)$. Let $V = L \otimes \IQ$ be its associated rational quadratic space and $V_\IR = L \otimes \IR$ the associated real quadratic space given by real $2\times2$ matrices with quadratic form $-N'\det$. The hermitian symmetric domain associated to the orthogonal group $O(V)$ can be realised inside $\IP(V \otimes \IC)$ as the open subset in the zero quadric $\calN$ given by $\calK = \{ [Z] \in \calN \colon (Z, \bar Z) > 0\}$. More explicitly, the map
    \begin{align}
		\label{eq:identification-upper-half-plane}
		 \IH \times \IH & \to \calK \\
		 \nonumber
		 Z = (z_1, z_2) & \mapsto \begin{bmatrix} z_1 & -z_1 z_2 \\ \frac{1}{N'} & -z_2 \end{bmatrix} =: Z_L
	 \end{align}
  biholomorphically identifies $\IH \times \IH$ with one of the two connected components of $\calK$ that we denote by $\calH$. Note that the action of $O(V_\IR)$ on $\calK$ restricts to an action of the identity component $\SO^+(V_\IR) \subseteq O(V_\IR)$ on the connected component. Moreover, in our setting the double cover $\Spin(V_\IR) \to \SO^+(V_\IR)$ can be identified with $\SL_2(\IR) \times \SL_2(\IR)$ such that the induced action on $\IH \times \IH$ is given by componentwise Möbius transformations.
 
 The dual lattice of $L$ is given by
	\begin{align*}
		L' = \left\{ \begin{pmatrix} a' & b' \\ c' & d' \end{pmatrix} \in \M_2(\IQ) \bigg\vert N'a', Nb', N'c', N'd' \in \IZ \right\}.
	\end{align*}
    The discriminant group $L'/L$ is identified with $(\IZ/N\IZ)^2 \oplus (\IZ/N'\IZ)^2$ using the map
    $$\begin{pmatrix} a' & b' \\ c' & d' \end{pmatrix} \mapsto \left(Nb', \frac{N}{N'}c', N'a', -N'd'\right),$$
    where the quadratic form on $(\IZ/N\IZ)^2 \oplus (\IZ/N'\IZ)^2$ is given by $q(w,x,y,z) = \frac{wx}{N} + \frac{yz}{N'}$.
	
	Under the double cover $\psi : \SL_2(\IR) \times \SL_2(\IR) \to \SO^+(V_\IR)$ the discriminant kernel $\Gamma(L)$ is identified as 
		\begin{align*}
			\Gamma(L) \simeq &\bigg\{ \left(\begin{pmatrix} a_1 & N' b_1 \\ N c_1 & d_1 \end{pmatrix}, \begin{pmatrix} a_2 & N' b_2 \\ N c_2 & d_2 \end{pmatrix} \right) \bigg\vert \ a_i, b_i, c_i, d_i \in \IZ \text{ for } i=1,2, \\
			& \quad a_1 a_2 \equiv d_1 d_2 \equiv 1 \! \pmod{N}, \ a_1 d_2 \equiv a_2 d_1 \equiv 1 \!\pmod{N'} \bigg\} / \{\pm 1\},
		\end{align*}
        compare \cite[Proposition 2.4]{Bieker} (note that we chose a slightly different model of $L_{N,N'}$ here).

        Let $\Gamma \subseteq O(V)$ be a subgroup that is commensurable with $\Gamma(L)$. The Baily-Borel compactification of $\Gamma \backslash \IH \times \IH$ is given as follows.
	For an isotropic vector $v \in V$ the point $[v] \in \IP(V_\IC)$ is in the boundary of $\mathcal{H}$ and we call $\{[v]\}$ a \emph{zero-dimensional (rational) boundary component}, \emph{special boundary point} or \emph{cusp}. For a two-dimensional isotropic subspace $F$ of $V$, we call the set of all non-special boundary points which can be represented by elements of $F \otimes_\IQ \IC$ the \emph{one-dimensional (rational) boundary component} or the \emph{one-dimensional (rational) cusp associated to $F$}. 
	The cusps of $\IH \times \IH$ are in bijection with $\IP^1(\IQ) \times \IP^1(\IQ)$ via $\begin{bsmallmatrix}
		t & -st \\
		1 & -s
	\end{bsmallmatrix} \mapsto (t,s)$, e.g. the isotropic vector $e_1$ represents the cusp $(\infty, \infty)$.
	The one-dimensional rational boundary components are via \eqref{eq:identification-upper-half-plane} of the form $\{ a/c\} \times \IH$ or $\IH \times \{ a/c\}$ for $a/c \in \IP^1(\IQ)$, where $\{ a/c\} \times \IH$ corresponds to the isotropic plane $I^1_{a,c} = \langle \begin{psmallmatrix} a & 0 \\ c & 0 \end{psmallmatrix}, \begin{psmallmatrix} 0 & a \\ 0 & c \end{psmallmatrix} \rangle \subset V$ while $\IH \times \{ a/c\}$ corresponds to the isotropic plane $I^2_{a,c} = \langle \begin{psmallmatrix} 0 & 0 \\ c & a \end{psmallmatrix}, \begin{psmallmatrix} c & a \\ 0 & 0 \end{psmallmatrix} \rangle \subset V$ for $a/c \in \IP^1(\IQ)$ with $a, c \in \IZ$ such that $\gcd(a,c) = 1$ and $c \geq 0$.
	We denote the union of $\IH \times \IH$ with all rational boundary components by $(\IH \times \IH)^*$ and write $X_{\Gamma} = \Gamma \backslash (\IH \times \IH)^*$.
	Then $X_{\Gamma}$ admits a structure of a normal projective variety and $Y_{\Gamma} = \Gamma \backslash ( \IH \times \IH) \subset X_{\Gamma}$ is an open subvariety.
	Note that the boundary $X_{\Gamma} \setminus Y_{\Gamma}$ has codimension one. For $\Gamma = \Gamma(L)$ we will use the notation $X(L) = X_{\Gamma(L)}$ and $Y(L) = Y_{\Gamma(L)}$.

        A \emph{(meromorphic) modular form} for $\Gamma$ of weight $r \in \IQ$ and multiplier system $\chi$ is a (meromorphic) function $f : \IH \times \IH \to \IC$ with the transformation property
    $$f(M_1 z_1, M_2, z_2) = (c_1 z_1 + d_1)^r (c_2 z_2 + d_2)^r \chi(M_1, M_2) f(z_1, z_2)$$
    for $M_1, M_2 \in \Gamma$, where we identify $\Gamma$ with its preimage in $\SL_2(\IR) \times \SL_2(\IR)$ and $M_i = \begin{psmallmatrix}
		a_i & b_i \\ c_i & d_i
	\end{psmallmatrix}$, and such that $f$ is meromorphic along the boundary.
	Note that since the Witt rank of $V$ is $2$ (or equivalently since the boundary has codimension one), the Koecher principle is not available. We call a meromorphic modular form $f : \IH \times \IH \to \IC$ a \emph{modular unit}, if its zeros and poles are concentrated on the boundary.
        
    For further reference we record the following two lemmas.
    \begin{lem}
        \thlabel{lem:sublattice-LNN}
        Let $L$ be an even lattice of signature $(2,2)$, Witt rank 2 and level $N$ as before. Then $L$ contains a finite index sublattice of the form $L_{N,N'}$ for some divisor $N'$ of $N$.
        
        Assume moreover that $L$ splits a scaled hyperbolic plane $U(N'')$ for some divisor $N''$ of $N$. 
        Then $L$ contains a sublattice of finite index isomorphic to $L_{N,N''}$.
    \end{lem}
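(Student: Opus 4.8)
The plan is to reduce both statements to a single computation for binary even lattices. \emph{Key fact:} let $M$ be an even lattice of rank $2$ and signature $(1,1)$ that is isotropic over $\IQ$. Picking a primitive isotropic $v\in M$ and completing to a basis $M=\IZ v\oplus\IZ w$ with $(v,w)=c>0$ and $q(w)=s$, a direct computation shows that the only isotropic lines in $M$ are $\IQ v$ and $\IQ v'$ with $v'=\frac{1}{\gcd(c,s)}(cw-sv)$, that $(v,v')=c^2/\gcd(c,s)$, and that the level of $M$ equals $\ell:=c^2/\gcd(c,s)$. Consequently every $U$-sublattice of $M$ has the form $\langle\alpha v,\beta v'\rangle\cong U(\alpha\beta\ell)$, so $M\supseteq U(m)$ if and only if $\ell\mid m$; in particular $M\supseteq U(\ell)$, and $M\supseteq U(N)$ whenever $\ell\mid N$. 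I would record this as a separate sublemma; the level computation is routine once $v'$ is found.

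For the second statement this fact is essentially all that is needed. Write $L=U(N'')\oplus M$ with $M=U(N'')^\perp$. Since the Witt rank is additive over orthogonal sums and $U(N'')$ already contributes Witt rank $1$, the complement $M$ is an even binary lattice of signature $(1,1)$ that is isotropic over $\IQ$. As $L$ is the orthogonal direct sum $U(N'')\oplus M$, its discriminant form splits as $D_{U(N'')}\oplus D_M$, so $N=\operatorname{lcm}(N'',\operatorname{level}(M))$ and in particular $\operatorname{level}(M)\mid N$. The key fact then gives $U(N)\subseteq M$, and therefore $U(N)\oplus U(N'')\subseteq M\oplus U(N'')=L$ is a finite index sublattice isomorphic to $L_{N,N''}$, using $N''\mid N$.

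For the first statement I would produce an orthogonal pair of such binary factors inside $L$. Since $V$ has Witt rank $2$ it is a split quadratic space, so I may choose a primitive isotropic vector $e_1\in L$, form the transverse lattice $K=(e_1^\perp\cap L)/\IZ e_1$ -- an even binary lattice of signature $(1,1)$ which is isotropic over $\IQ$ -- and lift an isotropic basis of a hyperbolic plane $U(\operatorname{level}(K))\subseteq K$ to a sublattice $M_2\subseteq e_1^\perp\cap L$. Because $e_1$ is isotropic and orthogonal to $e_1^\perp$, adding multiples of $e_1$ changes neither $q$ nor the pairing, so $M_2\cong U(\operatorname{level}(K))$ and $M_2\perp e_1$. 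Setting $W_2=M_2\otimes\IQ$, $W_1=W_2^\perp$ and $M_1=L\cap W_1$ gives an orthogonal decomposition $V=W_1\perp W_2$ with $M_1\oplus M_2\subseteq L$ of finite index, both $M_i$ binary even and isotropic over $\IQ$. Invoking the key fact on each factor, it remains to check that $\operatorname{level}(M_1)$ and $\operatorname{level}(M_2)=\operatorname{level}(K)$ both divide $N$: granting this I take $U(N)\subseteq M_2$ (as $\operatorname{level}(M_2)\mid N$) and $U(N')\subseteq M_1$ with $N'=\operatorname{level}(M_1)\mid N$, so that $U(N)\oplus U(N')\cong L_{N,N'}$ sits in $M_1\oplus M_2\subseteq L$ with finite index.

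The divisibilities $\operatorname{level}(M_i)\mid N$ are the crux, and I expect them to be the main obstacle. They amount to showing that a saturated binary isotropic factor of $L$ carries no scaling beyond the level of $L$. I would deduce them from the discriminant-form description of the overlattice $M_1\oplus M_2\subseteq L$: the glue group $H=L/(M_1\oplus M_2)$ is an isotropic subgroup of $D_{M_1}\oplus D_{M_2}$ with $D_L\cong H^\perp/H$, and $N$ annihilates $D_L$. Since $H$ is isotropic, $q$ vanishes on it, so the large-denominator quadratic values of $D_{M_1}$ cannot be absorbed into $H$ and must survive -- up to an adjustment by $H_2^\perp\subseteq D_{M_2}$ -- inside $H^\perp/H\cong D_L$; comparing denominators forces $\operatorname{level}(M_1)\mid N$, and symmetrically for $M_2$. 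Equivalently, this is the standard relation between the level of $L$, the divisor of the cusp $e_1$, and the level of its transverse lattice $K$ in the theory of signature $(2,n)$ lattices, which yields $\operatorname{level}(K)\mid N$ directly. I would isolate this divisibility as the one genuinely lattice-theoretic input; the remaining assembly is formal.
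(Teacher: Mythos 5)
The paper states this lemma without proof, so there is no argument of the authors' to compare against; I will assess your proposal on its own. Your key fact about binary even isotropic lattices is correct (I checked the level formula $\ell=c^2/\gcd(c,s)$ and the classification of $U$-sublattices), and with it your proof of the \emph{second} statement is complete: the level of an orthogonal summand divides the level of $L$ because discriminant forms add over orthogonal sums, so $\operatorname{level}(M)\mid N$ and $U(N)\subseteq M$ follows. The construction for the first statement ($M_2$ a lift of $U(\operatorname{level}(K))\subseteq K$, $M_1=L\cap(M_2\otimes\IQ)^\perp$) is also sound, and $\operatorname{level}(M_2)=\operatorname{level}(K)\mid N$ is indeed the standard relation (it is \thref{lem:projection-p-isos}: $K'/K\cong L_0'/L_0=H^\perp/H$ is a subquotient of $L'/L$ as a quadratic module, so its level divides $N$).

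The genuine gap is your justification of $\operatorname{level}(M_1)\mid N$. The heuristic that ``the large-denominator quadratic values of $D_{M_1}$ cannot be absorbed into the isotropic glue group $H$'' is false in general: take $L=U\oplus U$ (level $1$) with the primitive orthogonal decomposition $M_1=\langle e_1, 2f_1+e_2\rangle\cong U(2)$, $M_2=M_1^\perp\cong U(2)$; here $H\subseteq D_{U(2)}\oplus D_{U(2)}$ is the full diagonal, which is isotropic of order $4$ because the half-integral values of $q$ on the two factors cancel against each other, and $H^\perp/H=0$ has level $1$ while $\operatorname{level}(M_1)=2$. So isotropy of $H$ alone cannot force the divisibility, and the ``standard relation'' you cite only controls $M_2$ (via $K$), not $M_1$. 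The statement you need is nevertheless true for your specific construction, and the fix is short: since $M_1=L\cap W_1$ is primitive, the orthogonal projection $\pi_{W_1}$ maps $L'$ onto the dual $M_1'$ of $M_1$ in $W_1$; for $\lambda\in L'$ one has $q(\pi_{W_1}\lambda)=q(\lambda)-q(\pi_{W_2}\lambda)$, where $q(\lambda)\in\frac1N\IZ$ and $\pi_{W_2}\lambda$ lies in the dual of the saturation $\tilde M_2=L\cap W_2$. Because $\tilde M_2$ contains $M_2\cong U(\operatorname{level}(K))$, your key fact gives $\operatorname{level}(\tilde M_2)\mid\operatorname{level}(K)\mid N$, hence $q(\pi_{W_2}\lambda)\in\frac1N\IZ$ and therefore $Nq(\mu)\in\IZ$ for all $\mu\in M_1'$, i.e.\ $\operatorname{level}(M_1)\mid N$. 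In other words, the divisibility for $M_1$ is not a general fact about primitive binary factors (your own framing suggests it might be) but is inherited from the already-established bound on $M_2$; with this substitution the argument closes.
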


    As an immediate consequence of the previous lemma together with the fact that for lattices $L = L_{N,N'}$ the discriminant form $L'/L$ admits self-dual isotropic subgroups, we obtain the following
    \begin{lem}[{\cite{Bieker}}]
        Let $L$ be an even lattice of signature $(2,2)$ and of Witt rank 2. Then $L'/L$ admits self-dual isotropic subgroups and $\IC[L'/L]^{\SL_2(\IZ)}$ is generated by characteristic functions of self-dual isotropic subgroups.
    \end{lem}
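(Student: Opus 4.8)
The plan is to deduce the general statement from the special case $L = L_{N,N'}$, which underlies the lead-in fact, by passing to a finite index sublattice. By \thref{lem:sublattice-LNN} there is a finite index sublattice $\tilde L = L_{N,N'} \subseteq L$; write $H = L/\tilde L$ for the corresponding isotropic subgroup of $D := \tilde L'/\tilde L$, so that $H^\perp/H \cong L'/L$. The fact recalled just before the statement provides a self-dual isotropic subgroup $K \subseteq D$.

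First I would establish existence by a Lagrangian reduction argument: I claim that $\bar K := (K \cap H^\perp + H)/H$ is a self-dual isotropic subgroup of $H^\perp/H \cong L'/L$. Isotropy is immediate, since $q$ vanishes on $K$ and on $H$, and $K \cap H^\perp$ pairs trivially with $H$; the content is the self-duality, which amounts to the order count $|\bar K| = |K|/|H|$. This I would obtain from the perfect pairing on $D$ via the identity $(K \cap H^\perp)^\perp = K^\perp + H = K + H$ together with $|K|^2 = |D|$, which gives $|K \cap H^\perp| = |K|\,|K\cap H|/|H|$ and hence $|\bar K| = |K \cap H^\perp|/|K\cap H| = |K|/|H|$. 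This is the main technical point, but it is a standard reduction lemma for finite quadratic forms.

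For the generation statement I would use the descent operator $\downarrow_L^{\tilde L} \colon \IC[D] \to \IC[L'/L]$. A direct computation on the basis gives $\downarrow_L^{\tilde L} \circ \uparrow_L^{\tilde L} = |H| \cdot \operatorname{id}$, so $\downarrow_L^{\tilde L}$ is surjective, and since both operators commute with the Weil representation it restricts to a surjection $\IC[D]^{\SL_2(\IZ)} \twoheadrightarrow \IC[L'/L]^{\SL_2(\IZ)}$. By the case $L = L_{N,N'}$ from \cite{Bieker}, the source is generated by the characteristic functions $v^K = \sum_{\gamma \in K} \frake_\gamma$ of self-dual isotropic subgroups $K \subseteq D$. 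Evaluating the descent on such a generator and grouping the resulting $\frake_{\gamma+H}$ by cosets yields $\downarrow_L^{\tilde L}(v^K) = |K \cap H| \cdot v^{\bar K}$, a nonzero scalar multiple of the characteristic function of the self-dual isotropic subgroup $\bar K \subseteq L'/L$ constructed in the first step. Hence the images of the generators are scalar multiples of characteristic functions of self-dual isotropic subgroups, and by surjectivity these generate $\IC[L'/L]^{\SL_2(\IZ)}$.

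The only genuinely nontrivial ingredient is the reduction lemma that $\bar K$ is again self-dual isotropic; everything else is a formal consequence of the sublattice from \thref{lem:sublattice-LNN}, the known $L_{N,N'}$ case, and the adjoint pair $\uparrow_L^{\tilde L}, \downarrow_L^{\tilde L}$. I expect the order computation for $|\bar K|$ to be the step needing care, as it is precisely where the self-duality of $K$, rather than mere isotropy, is used.
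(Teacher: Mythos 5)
Your proposal is correct and follows essentially the same route as the paper: reduce to the case $L = L_{N,N'}$ via the finite index sublattice of \thref{lem:sublattice-LNN} and transport self-dual isotropic subgroups and their characteristic functions through the quotient $H^\perp/H \cong L'/L$. The paper leaves these details to \cite{Bieker} and a one-line remark, whereas you spell out the Lagrangian reduction $K \mapsto \bar K$ and the identity $\downarrow_L^{\tilde L}(v^K) = |K \cap H|\, v^{\bar K}$ explicitly; both computations check out.
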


    \section{The type of one-dimensional cusps of $X(L_{N, N'})$}\label{sec:OneDimCusps}

    We study one-dimensional rational boundary components in the case $L = L_{N,N'}$ explicitly.
    Note that in the general setting one-dimensional cusps were classified e.g. by \cite{AttwellDuval, KieferDiss, DriscollSpittlerScheithauerWilhelm}.
    We use the following notation of \cite{DriscollSpittlerScheithauerWilhelm}.
    Let $L$ be any even lattice of signature $(2,2)$ and Witt rank 2. For an isotropic plane $I \subseteq V$ the \emph{type} of $I$ is the isotropic subgroup
    $$ \mathrm{type}_L(I) = (I \cap L')/(I \cap L)$$
    of the discriminant group $L'/L$.
    As the discriminant kernel acts trivially on $L'/L$ rational isotropic planes representing the same one-dimensional cusp of $X(L)$ have the same type.
    We define the type of a one-dimensional cusp to be the type of its representatives.
    Moreover, we denote by $\mathrm{Types}(L)$ the set of all types of isotropic planes in $V$ (or equivalently the set of types of all one-dimensional cusps of $X(L)$).

    We will use the following setup, compare \cite[Section 2.1]{BruinierHabil}.
    Let $z$ and $\tilde z$ be an (ordered) pair of primitive isotropic vectors of $L$ that span $I$ over $\IQ$.
    Let us moreover fix $z' \in L'$ with $(z,z') = 1$ and $(\tilde z, z') = 0$.
    We set $K = L \cap z^\bot \cap (z')^\bot$. 
    Let $N_z$ be the positive integer such that $(z,L) = N_z \IZ$ and similarly let $N_{\tilde z}$ be the positive integer such that $(\tilde z, K) = N_{\tilde z} \IZ$, called the \emph{level} of $z$ and $\tilde z$, respectively.
    In particular, $\frac{1}{N_z} z \in L'$ and $\frac{1}{N_{\tilde z}} \tilde z \in K'$.
    Moreover, we choose $\tilde z' \in K'$ such that $(\tilde z, \tilde z') = 1$.

    Let $L_0$ be the lattice generated by $L$ and $\frac{z}{N_z}$ and let $L_{00}$ be the lattice generated by $L_0$ and $\frac{\tilde z}{N_{\tilde z}}$. Similarly, let $K_0$ be the lattice generated by $K$ and $\frac{\tilde z}{N_{\tilde z}}$. Note that
	$$L_0' = \{ \lambda \in L' \ \vert\ (\lambda, z) = 0 \pmod{N_z} \}.$$

    Modifying the orthogonal projection from $L'$ to $K'$ as in \cite[(2.7)]{BruinierHabil} we obtain a projection $p_L \colon L_0' \to K'$.
    
    \begin{lem}
        \thlabel{lem:projection-p-isos}
        The map $p_L$ induces an isomorphism
        $$ L_0'/L_0 \xrightarrow[p_L]{\cong} K'/K.$$
        Moreover, $|L'/L| = N_z^2|K'/K| = N_z^2N_{\tilde z}^2$.
    \end{lem}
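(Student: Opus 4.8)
The plan is to prove \thref{lem:projection-p-isos} by analysing the tower of lattices $L \subseteq L_0 \subseteq L_{00}$ and tracking indices. The statement has two parts: first that $p_L$ descends to an isomorphism $L_0'/L_0 \xrightarrow{\cong} K'/K$, and second the numerical identity $|L'/L| = N_z^2 |K'/K| = N_z^2 N_{\tilde z}^2$. I would treat the isomorphism first and deduce the index count as a consequence, since the hard content is really the compatibility of the modified projection $p_L$ with passing to discriminant groups.

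First I would recall the setup of \cite[Section 2.1]{BruinierHabil} concretely. The key structural input is the splitting $L_0 = K \oplus \IZ(z/N_z) \oplus \IZ z'$ rationally (the last two vectors spanning a hyperbolic plane since $(z,z')=1$), reflecting the general fact that an isotropic vector in an even lattice splits off a hyperbolic plane over $\IQ$. With $L_0' = \{\lambda \in L' : (\lambda, z) \equiv 0 \pmod{N_z}\}$ as stated, I would verify that the modified projection $p_L$ of \cite[(2.7)]{BruinierHabil}, which sends $\lambda \mapsto \lambda - (\lambda, z') \tfrac{z}{N_z} - (\lambda, \tfrac{z}{N_z}) z' + (\text{correction})$ landing in $z^\perp \cap (z')^\perp \otimes \IQ = K \otimes \IQ$, in fact maps $L_0'$ into $K'$ and $L_0$ into $K$. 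The point is that the correction term in \cite[(2.7)]{BruinierHabil} is designed precisely to make the map well-defined on $L_0$, i.e. to cancel the $z'$-component modulo $K$; I would invoke that this is exactly the content of the cited formula rather than rederive it.

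The central step is injectivity and surjectivity of the induced map $\bar p_L$ on discriminant groups. For surjectivity, given $\kappa \in K'$ I would lift it back to $L_0'$ by adding suitable multiples of $z/N_z$ and $z'$ to make the result pair integrally with all of $L$; the freedom in the hyperbolic plane $\langle z/N_z, z'\rangle$ gives exactly the room needed, and the condition $(\lambda,z) \equiv 0 \pmod{N_z}$ defining $L_0'$ is automatically met for such a lift. For injectivity, I would show that if $p_L(\lambda) \in K$ for $\lambda \in L_0'$, then $\lambda \in L_0$: the kernel of the projection is spanned over $\IQ$ by $z$ and $z'$, and the defining congruence together with $(\lambda, z') \in \IZ$ (which holds since $z' \in L'$ forces $(\lambda,z')\in\IZ$ for $\lambda \in L'\subseteq (L')'$, using $L$ even hence $L \subseteq L'$) pins $\lambda$ down to lie in $L_0$.

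Finally I would deduce the numerical identity. From the isomorphism $L_0'/L_0 \cong K'/K$ I already get $|L_0'/L_0| = |K'/K|$. To relate $|L'/L|$ to $|L_0'/L_0|$, note that $L_0 \supseteq L$ with $L_0/L = \langle z/N_z + L\rangle \cong \IZ/N_z\IZ$, so $[L_0 : L] = N_z$, and dually $[L' : L_0'] = N_z$ since $L_0' = \{\lambda : (\lambda,z)\equiv 0 \bmod N_z\}$ is the kernel of the surjection $L' \to \IZ/N_z\IZ,\ \lambda \mapsto (\lambda,z) \bmod N_z$ (surjective because $z$ is primitive with $(z,L)=N_z\IZ$). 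Combining $[L_0:L]=[L':L_0']=N_z$ with $|L'/L| = [L':L_0']\,|L_0'/L_0|\,/\,[L_0:L] \cdot [L_0:L]$ — more cleanly, $|L'/L| = [L':L_0'][L_0':L_0][L_0:L_0]$ organised via $|L'/L|/|L_0'/L_0| = [L':L_0']/[L_0:L]^{-1}$ — I arrive at $|L'/L| = N_z^2|L_0'/L_0| = N_z^2|K'/K|$. The identical argument applied to the pair $(K, \tilde z)$ inside the rank-$2$ lattice $K$, where $\tilde z$ is isotropic of level $N_{\tilde z}$ and $K_0 = K + \IZ(\tilde z/N_{\tilde z})$, gives $|K'/K| = N_{\tilde z}^2$ (the corresponding rank-zero "$\tilde K$" being trivial, so its discriminant group has order $1$), yielding $|L'/L| = N_z^2 N_{\tilde z}^2$. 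I expect the main obstacle to be verifying cleanly that $p_L$ respects the lattices $L_0$ and $L_0'$ — i.e. checking the correction term in \cite[(2.7)]{BruinierHabil} does its job without sign or normalisation errors — rather than the index bookkeeping, which is formal once the isomorphism is in place.
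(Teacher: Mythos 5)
Your overall strategy is sound, and it does more than the paper, whose entire proof is the citation to \cite[Section 2.1, p.~41]{BruinierHabil}: your index bookkeeping via the chain $L \subseteq L_0 \subseteq L_0' \subseteq L'$, with $[L_0:L]=[L':L_0']=N_z$ (the latter because $(z,L')=\IZ$ by primitivity of $z$ in $L=(L')'$), correctly yields $|L'/L|=N_z^2|L_0'/L_0|$, and iterating once more inside $K$ gives $|K'/K|=N_{\tilde z}^2$. That part I would accept as written (modulo the garbled intermediate display $[L_0:L_0]$, which is clearly a typo for $[L_0:L]$).

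There is, however, a genuine error in your injectivity step. You assert that $(\lambda,z')\in\IZ$ for all $\lambda\in L'$ ``since $z'\in L'$ forces $(\lambda,z')\in\IZ$ for $\lambda\in L'\subseteq (L')'$''. This is backwards: for a nondegenerate even lattice one has $(L')'=L\subseteq L'$, not $L'\subseteq(L')'$, and two elements of $L'$ need not pair integrally. Concretely, in $U(N)$ with $z=e_1$, $z'=f_1/N$, the vector $\lambda=e_1/N$ lies in $L_0'$ but $(\lambda,z')=1/N$. So the data you invoke (``the defining congruence together with $(\lambda,z')\in\IZ$'') is not available, and your argument that $p_L(\lambda)\in K$ forces $\lambda\in L_0$ does not go through as stated. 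The correct argument cannot avoid the explicit form of the modified projection: in \cite[(2.7)]{BruinierHabil} one fixes $\zeta\in L$ with $(\zeta,z)=N_z$ and sets $p_L(\lambda)=\lambda_K-\tfrac{(\lambda,z)}{N_z}\zeta_K$, and it is this choice of $\zeta$ \emph{inside $L$} (rather than the coordinates along $z$ and $z'$) that lets one show $p_L^{-1}(K)=L_0$ and $p_L(L_0')=K'$; your sketch of surjectivity by ``adding multiples of $z/N_z$ and $z'$'' has the analogous problem, since $z'$ itself need not lie in $L_0$. So the isomorphism half of the lemma still rests on \cite[Proposition 2.2]{BruinierHabil} (which is exactly what the paper cites), and your proposal as written does not supply an independent proof of it.
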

    \begin{proof}
        This is contained in \cite[Section 2.1, p.41]{BruinierHabil}.
    \end{proof}
    \begin{cor}
        Let $L$ be an even lattice of signature $(2,2)$ and Witt rank 2 and let $I \subset V$ be an isotropic plane.    
        The type of $I$ is a self-dual isotropic subgroup of $L'/L$, i.e. $\type_L(I)^\perp = \type_L(I)$ 
    \end{cor}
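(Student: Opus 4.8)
The plan is to prove the two subgroups $\type_L(I)$ and $\type_L(I)^\perp$ of the discriminant form $D = L'/L$ coincide by computing the latter explicitly. First I would record that $\type_L(I) = (I\cap L')/(I\cap L)$ embeds into $D$ as the subgroup $(L + (I\cap L'))/L$ via $\lambda + (I\cap L) \mapsto \lambda + L$; this is well defined and injective because $(I\cap L')\cap L = I\cap L$. Since $I$ is totally isotropic, any two $\lambda,\mu \in I\cap L'$ satisfy $(\lambda,\mu) = 0$, so $\type_L(I)$ is isotropic, i.e. $\type_L(I)\subseteq\type_L(I)^\perp$; it then remains to prove the reverse inclusion. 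The decisive structural input is that $\dim_\IQ V = 4$ while $I$ is a $2$-dimensional isotropic subspace, so $I$ is maximal isotropic and $I^\perp = I$.

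Next I would exploit $I^\perp = I$ to descend the bilinear form to a perfect $\IQ$-valued pairing $V/I \times I \to \IQ$, $(\bar v, x) \mapsto (v,x)$, which is well defined since $x\in I = I^\perp$. Writing $\bar L = (L+I)/I$ for the image of $L$, a full lattice in $V/I$, I claim that $I\cap L'$ is exactly its dual lattice $(\bar L)^\vee = \{x\in I : (v,x)\in\IZ \text{ for all } v\in L\}$: indeed an $x\in I$ pairs integrally with all of $L$ precisely when $x\in L'$. Because $\bar L$ is a full lattice and the pairing is perfect, biduality yields $((\bar L)^\vee)^\vee = \bar L$.

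Finally I would compute $\type_L(I)^\perp$. An element $\mu\in L'$ represents a class in $\type_L(I)^\perp$ iff $(\mu,\lambda)\in\IZ$ for all $\lambda\in I\cap L' = (\bar L)^\vee$, i.e. iff $\bar\mu\in((\bar L)^\vee)^\vee = \bar L = (L+I)/I$, i.e. iff $\mu\in L+I$. Combined with $\mu\in L'$ and the Dedekind modular law (using $L\subseteq L'$), this gives $\mu\in (L+I)\cap L' = L + (I\cap L')$, whence $\type_L(I)^\perp = (L+(I\cap L'))/L = \type_L(I)$. The step I expect to need the most care is the identification $(\bar L)^\vee = I\cap L'$ together with the biduality $((\bar L)^\vee)^\vee = \bar L$, since both rest on $I$ being maximal isotropic so that the induced pairing on $V/I\times I$ is perfect. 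As a cross-check one could instead argue by counting: the same biduality shows $[\bar{L'}:\bar L] = [I\cap L' : I\cap L] = |\type_L(I)|$, while a direct computation gives $[\bar{L'}:\bar L] = |L'/L|/|\type_L(I)|$, so $|\type_L(I)|^2 = |L'/L| = N_z^2 N_{\tilde z}^2$ by \thref{lem:projection-p-isos}; together with $|H|\,|H^\perp| = |L'/L|$ and isotropy this again forces $\type_L(I)^\perp = \type_L(I)$.
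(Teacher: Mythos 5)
Your argument is correct, but it takes a genuinely different route from the paper's. The paper's proof is a three-line counting argument: it uses the explicit generators $\frac{z}{N_z}$ and $\frac{\tilde z}{N_{\tilde z}}$ of $I\cap L'$ to get $|\type_L(I)| = N_z N_{\tilde z}$, combines this with $|L'/L| = N_z^2 N_{\tilde z}^2$ from the preceding lemma (itself imported from Bruinier's Habilitation), and concludes via isotropy together with $|H|\cdot|H^\perp| = |L'/L|$. You instead compute $\type_L(I)^\perp$ directly: the key observations are that $I$ is maximal isotropic in the four-dimensional space $V$, so $I^\perp = I$ and the bilinear form descends to a perfect pairing $V/I \times I \to \IQ$; that $I\cap L'$ is the dual of $\bar L = (L+I)/I$ under this pairing; and that biduality plus the modular law then identify $\type_L(I)^\perp$ with $(L+(I\cap L'))/L = \type_L(I)$. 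All the individual steps check out (the perfectness of the descended pairing, $(\bar L)^\vee = I\cap L'$, and $(L+I)\cap L' = L+(I\cap L')$ since $L\subseteq L'$), and your concluding cross-check $|\type_L(I)|^2 = |L'/L|$ in fact recovers the paper's counting argument intrinsically, without needing the explicit generators. What your approach buys is self-containedness and generality: it works verbatim for any even lattice of signature $(2,2)$ and Witt rank $2$ without invoking the structure of $I\cap L'$ or the order formula of the preceding lemma. What it costs is length; the paper's version is essentially immediate once that lemma is in place.
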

    \begin{proof}
        The type of $I$ is generated by the images of $\frac{z}{N_{z}}$ and $\frac{\tilde z}{N_{\tilde z}}$, so $|\mathrm{type}(I)| = N_zN_{\tilde z}$.
        But in this case $M = \{0\}$, so $|L'/L| = N_z^2N_{\tilde z}^2$.
        Hence, $\mathrm{type}_L(I)$ is a self-dual isotropic subgroup of $L'/L$.
    \end{proof}
    
    We denote by $\IC[L'/L]_{\mathrm{Types}(L)} \subseteq \IC[L'/L]^{\SL_2(\IZ)}$ the subspace generated by the characteristic functions $v^H$ where $H$ ranges over $\mathrm{Types}(L)$.
    Recall that $L' / L$ decomposes as an orthogonal direct sum $L' / L \cong \bigoplus_{p \mid N} D_p$ of its $p$-components, where $N$ is the level of $D$.
    Moreover, the self-dual isotropic subgroups $H$ of $L' / L$ are precisely those of the form $H \cong \bigoplus_{p \mid N} H_p$ where $H_p$ is a self-dual isotropic subgroup of $D_p$ by \cite[Lemma 3.3]{Bieker}.
    For $p \mid N$ we denote by $\mathrm{Types}(L)_p$ the set of $H_p$ for $H \in \mathrm{Types}(L)$.
    
    Let $L \subset L^{(p)} \subset L'$ be such that $(L^{(p)})'/L^{(p)} = D_p$, e.g. $L^{(p)} = L + H$ for the isotropic subgroup $H = \bigoplus_{p' \neq p} H_{p'}$ of $L' / L$ where $H_{p'} \subset D_{p'}$ is self-dual isotropic for every prime factor $p' \neq p$ of $N$.
    \begin{lem}
    \thlabel{lem:types-pcomponent}
        Let $I$ be an isotropic plane in $V$. Then $\mathrm{type}_L(I)_p = \mathrm{type}_{L^{(p)}}(I)$ as subgroups of $\left(L'/L\right)_p = \left(L^{(p)} \right)'/L^{(p)}$.
        In particular, $\mathrm{Types}(L)_p = \mathrm{Types}(L^{(p)})$.
    \end{lem}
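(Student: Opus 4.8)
The plan is to compute both types as images inside the appropriate discriminant group and to reduce the claimed identity to a compatibility between intersections and primary decompositions. Throughout, write $T = \type_L(I) = (I \cap L')/(I \cap L)$, which I regard as the subgroup $\pi_L(I \cap L') = ((I \cap L') + L)/L$ of $L'/L$ via the reduction map $\pi_L \colon L' \to L'/L$; since the kernel of $\pi_L$ restricted to $I \cap L'$ is exactly $I \cap L$, this identification is legitimate. Any lattice $L^{(p)}$ as in the statement satisfies $L \subseteq L^{(p)} \subseteq L'$, hence corresponds to an isotropic subgroup $H = L^{(p)}/L$ of $L'/L$; decomposing $H = \bigoplus_q H_q$ into $q$-parts, the requirement $(L^{(p)})'/L^{(p)} = H^\perp/H \cong D_p$ forces $H_p = 0$ and each $H_q$ ($q \neq p$) self-dual in $D_q$, so $H = \bigoplus_{q \neq p} H_q$ and $H^\perp = H \oplus D_p$. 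In particular $(L^{(p)})'/L^{(p)} = H^\perp/H$ is identified with $D_p = (L'/L)_p$ via the projection $H \oplus D_p \to D_p$.

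First I would record the chain $L \subseteq L^{(p)} \subseteq (L^{(p)})' \subseteq L'$ and use it to express $\type_{L^{(p)}}(I)$ as the image of $I \cap (L^{(p)})'$ under $(L^{(p)})' \to (L^{(p)})'/L^{(p)}$. Because $I \cap L^{(p)} = (I \cap (L^{(p)})') \cap L^{(p)}$, the quotient $(I \cap (L^{(p)})')/(I \cap L^{(p)})$ is isomorphic to that image. Under the third isomorphism theorem $(L^{(p)})'/L^{(p)} = ((L^{(p)})'/L)/(L^{(p)}/L) = H^\perp/H$, the reduction map is $x \mapsto \pi_L(x) \bmod H$. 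Now $x \in I \cap (L^{(p)})'$ precisely when $x \in I \cap L'$ and $\pi_L(x) \in H^\perp$, so $\pi_L$ carries $I \cap (L^{(p)})'$ onto $T \cap H^\perp$. Reducing modulo $H$ gives the key identity
\[
  \type_{L^{(p)}}(I) = \bigl((T \cap H^\perp) + H\bigr)/H \quad \text{inside } H^\perp/H.
\]

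It then remains to evaluate the right-hand side by primary decomposition, the one genuinely computational step. Writing $T = \bigoplus_q T_q$ with $T_q = T \cap D_q$ (primary decomposition is compatible with passing to subgroups) and using $H^\perp = \bigoplus_{q \neq p} H_q \oplus D_p$, an element $\sum_q t_q$ of $T$ lies in $H^\perp$ iff $t_q \in H_q$ for all $q \neq p$, so $T \cap H^\perp = \bigl(\bigoplus_{q \neq p}(T_q \cap H_q)\bigr) \oplus T_p$. Adding $H = \bigoplus_{q \neq p} H_q$ absorbs the components away from $p$ and yields $(T \cap H^\perp) + H = H \oplus T_p$; hence $((T \cap H^\perp)+H)/H \cong T_p$ under the identification $H^\perp/H \cong D_p$. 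This is exactly $\type_L(I)_p$, which proves the first assertion. The ``in particular'' then follows because $L$ and $L^{(p)}$ share the rational span $V$, so the same isotropic planes $I$ occur for both; applying the identity to every $I$ shows that the two sets of $p$-parts of types coincide, i.e. $\Types(L)_p = \Types(L^{(p)})$.

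The main obstacle is bookkeeping rather than a deep difficulty: one must correctly track the type through the two identifications $(L^{(p)})'/L^{(p)} = H^\perp/H$ and $H^\perp/H \cong D_p$, and verify that $\pi_L$ surjects $I \cap (L^{(p)})'$ onto $T \cap H^\perp$, so that no part of the type is lost. Once the key identity $\type_{L^{(p)}}(I) = ((T \cap H^\perp)+H)/H$ is established, the primary-decomposition computation collapsing the right-hand side to $T_p$ is routine.
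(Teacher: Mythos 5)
Your proof is correct, but it takes a genuinely different and more computational route than the paper's. The paper argues softly: the inclusion $I \cap (L^{(p)})' \subset I \cap L'$ induces an injection $\type_{L^{(p)}}(I)\hookrightarrow \type_L(I)_p$, and since both sides are self-dual isotropic subgroups of $(L'/L)_p$ --- the left-hand side by the corollary that types are self-dual applied to the lattice $L^{(p)}$, the right-hand side by the same corollary applied to $L$ together with the fact that a self-dual isotropic subgroup decomposes into self-dual isotropic $p$-parts --- they have the same order $\sqrt{|(L'/L)_p|}$, so the injection is an equality. You instead compute both subgroups outright via the identity $\type_{L^{(p)}}(I) = \bigl((T \cap H^\perp)+H\bigr)/H$ and collapse the right-hand side to $T_p$ by primary decomposition; your verification that $\pi_L$ maps $I\cap (L^{(p)})'$ onto $T\cap H^\perp$ supplies directly the surjectivity that the paper extracts from the cardinality count. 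What your argument buys is independence from the self-duality of types: you only use self-duality of the $H_q$ for $q\neq p$, which is built into the choice of $L^{(p)}$, so the lemma would survive even without the earlier corollary. What it costs is the extra bookkeeping through the identifications $(L^{(p)})'/L^{(p)} = H^\perp/H \cong D_p$, which you track correctly (in particular $(L^{(p)})' = \pi_L^{-1}(H^\perp)$ and $H^\perp = \bigoplus_{q\neq p} H_q \oplus D_p$). Your opening observation that any $L^{(p)}$ with $(L^{(p)})'/L^{(p)} = D_p$ must have $H_p = 0$ and $H_q$ self-dual for $q \neq p$ is true but not needed, since the paper simply fixes such an $H$ explicitly.
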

    \begin{proof}
        By construction the inclusion $I \cap \left(L^{(p)}\right)' \subset I \cap L'$ induces an injection $\mathrm{type}_{L^{(p)}}(I)\hookrightarrow \mathrm{type}_L(I)_p$. But since both are self-dual isotropic subgroups of $(L'/L)_p$ they have to agree.
    \end{proof}

    In the case $L = L_{N,N'}$ we can classify the types for $L$ more explicitly.
    Recall that the rational boundary component $\{\frac{a}{c}\} \times \IH$ (where $a,c \in \IZ$ with $\gcd(a,c) = 1$, $c \geq 0$ as above) corresponds to the isotropic plane $I^1_{a,c} \subset V$ for which we choose the basis
        $I^1_{a,c} = \left \langle z, \tilde z \right \rangle$  with 
        $$z = \begin{pmatrix} 0 & a \\ 0 & c \end{pmatrix} \qquad \text{ and } \qquad \tilde{z} = \frac{N}{N'M} \begin{pmatrix} a & 0 \\ c & 0  \end{pmatrix},$$
    where $M = \gcd(N/N',c)$. Note that both $z$ and $\tilde z$ are primitive isotropic vectors of $L$.
        Similarly, for the isotropic plane $I^2_{a,c}$ corresponding to the rational boundary component $\IH \times \{\frac a c \}$ we choose the basis $I^2_{a,c} = \left \langle z, \tilde z \right \rangle$ with
        $$z = \begin{pmatrix} c & a \\ 0 & 0 \end{pmatrix} \qquad \text{ and } \qquad \tilde{z} = \frac{N}{N'M} \begin{pmatrix} 0 & 0 \\ c & a  \end{pmatrix}.$$
    
    We determine the types of the isotropic planes as follows. For $* = 1,2$ we set $H^*_{a,c} = \mathrm{type}(I^*_{a,c})$.
    \begin{lem}
        \thlabel{lem:isotropic-subgroups-cusps}
        Let $L = L_{N,N'}$.
        The intersection $I^*_{a,c} \cap L'$  is generated by $\frac{z}{N_z}$ and $\frac{\tilde z}{N_{\tilde z}}$.
        Its type $H^*_{a,c}$ is the self-dual isotropic subgroup of $L'/L \cong (\IZ/N\IZ)^2 \oplus (\IZ/N'\IZ)^2$ generated by 
        $$ \left(\frac{Na}{N'M},0,0,-\frac c M \right) \qquad \text{and} \qquad \left(0,c,a,0\right) $$
        for $* = 1$ and for $* = 2$ by 
        $$ \left(\frac{Na}{N'M},0,\frac c M,0 \right) \qquad \text{and} \qquad \left(0,c,0,-a\right) .$$
        In the notation of \cite[Section 4]{Bieker} we have 
        $H^1_{a,c} = H_{(\frac{N}{N'M}d_1, 0, Md_2), (d_1, 0, d_2)}^{(0,-u d_2),(u^{-1} d_1, 0)}$
        where $d_1 = \gcd(a,N')$, $d_2 = \gcd(c/M,N')$ and $u \in (\IZ/N'\IZ)^\times$ such that $\left(\frac{Na}{N'M},0,0,-\frac c M \right)$ is a multiple of $\left( \frac{N}{N'M} d_1, 0, 0, -ud_2\right)$.
        Similarly, $H^2_{a,c} = H_{(\frac{N}{N'M}d_1, 0, Md_2), (d_2, 0, d_1)}^{(-u d_2, 0),(0, u^{-1} d_1)}$.
    \end{lem}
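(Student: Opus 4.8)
The plan is to carry out everything inside the matrix model of $L=L_{N,N'}$, exploiting the fact that the planes $I^*_{a,c}$ consist of rank-one matrices. For $*=1$ every element of $I^1_{a,c}$ has both of its columns in the line $\IQ\binom{a}{c}$, so I would write $I^1_{a,c}=\{\binom{a}{c}(s\ t):s,t\in\IQ\}$; for $*=2$ one instead has the transposes $\binom{s}{t}(c\ a)$. In these coordinates the defining conditions of $L$ and of $L'$ turn into explicit divisibility conditions on $s$ and $t$, and this is exactly where the hypotheses $\gcd(a,c)=1$ and $M=\gcd(N/N',c)$ enter.

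First I would pin down the two levels. Pairing $z$ against $L$ with the bilinear form $(X,Y)=-N'(ad'+a'd-bc'-b'c)$ gives $(z,L)=\gcd(N'c,Na)\,\IZ$, and $\gcd(N'c,Na)=N'\gcd(c,\tfrac{N}{N'}a)=N'\gcd(c,\tfrac{N}{N'})=N'M$ using $\gcd(a,c)=1$, so $N_z=N'M$. Rather than compute $N_{\tilde z}$ from $K=L\cap z^\bot\cap(z')^\bot$ directly, I would read it off from \thref{lem:projection-p-isos}: that lemma gives $|L'/L|=N_z^2N_{\tilde z}^2$, and since $|L'/L|=N^2N'^2$ and $N_z=N'M$ this forces $N_{\tilde z}=N/M$.

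Next I would determine the intersections. Substituting $X=\binom{a}{c}(s\ t)$ into the four conditions $N'a',Nb',N'c',N'd'\in\IZ$ and using $\gcd(a,c)=1$ collapses them to $s\in\tfrac1{N'}\IZ$ and $t\in\tfrac1{\gcd(Na,N'c)}\IZ=\tfrac1{N'M}\IZ$; hence $I^1_{a,c}\cap L'$ has $\IZ$-basis $\tfrac1{N'}\binom{a}{c}(1\ 0)$ and $\tfrac1{N'M}\binom{a}{c}(0\ 1)$, which are precisely $\tfrac{\tilde z}{N_{\tilde z}}$ and $\tfrac{z}{N_z}$ with the levels just found. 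The same substitution restricted to integral entries together with the extra condition $\tfrac{N}{N'}\mid c'$ yields $I^1_{a,c}\cap L=\langle z,\tilde z\rangle$, so the type is the quotient generated by the images of $\tfrac{z}{N_z}$ and $\tfrac{\tilde z}{N_{\tilde z}}$; that it is self-dual of order $N_zN_{\tilde z}=NN'$ is already supplied by the Corollary preceding the lemma. The case $*=2$ is identical after transposing and interchanging the roles of the two divisibility conditions.

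Finally I would evaluate the isomorphism $L'/L\cong(\IZ/N\IZ)^2\oplus(\IZ/N'\IZ)^2$ on these two generators. Feeding $\tfrac{z}{N_z}=\tfrac1{N'M}\begin{psmallmatrix}0&a\\0&c\end{psmallmatrix}$ and $\tfrac{\tilde z}{N_{\tilde z}}=\tfrac1{N'}\begin{psmallmatrix}a&0\\c&0\end{psmallmatrix}$ into the coordinates produces $(\tfrac{Na}{N'M},0,0,-\tfrac cM)$ and $(0,c,a,0)$, and the transposed computation gives the two vectors listed for $*=2$. The concluding sentence, which rewrites $H^*_{a,c}$ in the notation of \cite[Section 4]{Bieker}, is then pure bookkeeping: one reads off the orders of the two generators in the cyclic factors as $d_1=\gcd(a,N')$ and $d_2=\gcd(c/M,N')$ and introduces the unit $u\in(\IZ/N'\IZ)^\times$ making $(\tfrac{Na}{N'M},0,0,-\tfrac cM)$ a scalar multiple of the normal-form generator $(\tfrac{N}{N'M}d_1,0,0,-ud_2)$. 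I expect this final matching against Bieker's parametrisation of self-dual isotropic subgroups to be the most error-prone step, whereas the lattice computations leading to the generators are routine once the rank-one description of $I^*_{a,c}$ is fixed.
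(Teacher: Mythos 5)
Your argument is correct and is in substance the same direct verification that the paper's (very terse) proof asserts "can be verified quickly": you compute $N_z = N'M$ from the pairing, obtain $N_{\tilde z} = N/M$ cleanly from $|L'/L| = N_z^2N_{\tilde z}^2$, determine $I^*_{a,c}\cap L'$ and $I^*_{a,c}\cap L$ via the rank-one parametrisation, and then push $\frac{z}{N_z}$ and $\frac{\tilde z}{N_{\tilde z}}$ through the identification $L'/L\cong(\IZ/N\IZ)^2\oplus(\IZ/N'\IZ)^2$, leaving the translation into Bieker's notation as the same bookkeeping step the paper leaves implicit. One remark that concerns the paper rather than your proof: your (correct) images $(0,c,a,0)$ and $(0,c,0,-a)$ only come out if the second coordinate of the identification is read as $N'c'$ rather than the $\frac{N}{N'}c'$ printed in the preliminaries, which does not even land in $\IZ/N\IZ$ in general.
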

    \begin{proof}
        It can be verified quickly that the vectors in the statement of the lemma are precisely the images of $\frac{z}{N_z}$ and $\frac{\tilde z}{N_{\tilde z}}$ under the identification $L'/L \cong (\IZ/N\IZ)^2 \oplus (\IZ/N'\IZ)^2$.
        The last statement follows from the construction, namely the subgroup $H_{(\frac{N}{N'M}d_1, 0, Md_2), (d_1, 0, d_2)}^{(0,-u d_2),(u^{-1} d_1, 0)}$ is by definition generated by $\left( \frac{N}{N'M} d_1, 0, 0, -ud_2\right)$ and $(0, Md_2, u^{-1}d_1,0)$.
    \end{proof} 
    
    \begin{rem}
        Certain self-dual isotropic subgroups of $L'/L$ are constructed and classified in \cite[Section 4]{Bieker}.
        In particular, when $N = N' = p$ for some prime number $p$ every self-dual isotropic subgroup is of this form by \cite[Proposition 4.10]{Bieker}.
        In general this will not be true: already for $N = N' = p^2$ not every self-dual isotropic subgroup from \cite[Example  4.7]{Bieker} is of the form $H^*_{a,c}$ as above (and there are also self-dual isotropic subgroups not contained in the list of \cite[Example 4.7]{Bieker} in this case).
    \end{rem}

    \begin{lem}
    \thlabel{lem:types-factorisation}
        Let $L$ be a lattice of signature $(2,2)$, Witt rank 2 and level $N$. Then the natural maps 
        $$\mathrm{Types}(L) \xrightarrow{\cong} \prod_{p \mid N} \mathrm{Types}\left(L^{(p)} \right) $$
        and
        $$ \IC[L'/L]_{\mathrm{Types}(L)} \xrightarrow{\cong} \bigotimes_{p \mid N}\IC\left[\left( \left(L^{(p)}\right)'/L^{(p)}\right)\right]_{\mathrm{Types}\left( L^{(p)} \right)}$$
        are isomorphisms.
    \end{lem}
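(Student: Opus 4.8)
The plan is to reduce both assertions to the bijection of the finite sets $\Types(L)$ and $\prod_{p\mid N}\Types(L^{(p)})$. The point is that under the canonical isometry $\IC[L'/L]\cong\bigotimes_{p\mid N}\IC[D_p]$ coming from $L'/L\cong\bigoplus_{p\mid N}D_p$, the characteristic function of a self-dual isotropic subgroup factors as $v^{H}=\bigotimes_{p\mid N}v^{H_p}$, where $H=\bigoplus_{p\mid N}H_p$ is the decomposition into $p$-components from \cite[Lemma 3.3]{Bieker}. So a bijection $\Types(L)\xrightarrow{\cong}\prod_{p\mid N}\Types(L^{(p)})$, $H\mapsto(H_p)_{p\mid N}$, matches the spanning set $\{v^H\}$ of $\IC[L'/L]_{\Types(L)}$ with the spanning set $\{\bigotimes_p v^{H_p}\}$ of $\bigotimes_{p\mid N}\IC[D_p]_{\Types(L^{(p)})}$, and the algebra statement should follow from the set statement once the linear relations on the two sides are shown to correspond.

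First I would check that the map $H\mapsto(H_p)_p$ is well defined and injective. Well-definedness is immediate from \thref{lem:types-pcomponent}: if $H=\type_L(I)$, then $H_p=\type_L(I)_p=\type_{L^{(p)}}(I)\in\Types(L^{(p)})$. Injectivity is formal, since a self-dual isotropic subgroup is reconstructed from its $p$-components as $H=\bigoplus_{p\mid N}H_p$, so $H$ is determined by its image $(H_p)_p$.

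Surjectivity is the crux, and this is where I expect the main difficulty. Given a tuple $(H_p)_{p\mid N}$ with each $H_p\in\Types(L^{(p)})=\Types(L)_p$, I must produce a \emph{single} rational isotropic plane $I\subset V$ with $\type_L(I)_p=H_p$ for every $p\mid N$. Reducing to $L=L_{N,N'}$ via \thref{lem:sublattice-LNN}, I would exploit the explicit description in \thref{lem:isotropic-subgroups-cusps}: the type $H^*_{a,c}$ is completely governed by the residues of $a$ and $c$ together with $\gcd(a,N')$, $\gcd(c/M,N')$ and $M=\gcd(N/N',c)$, all of which are determined $p$-locally. Each prescribed $H_p$ is realised by some plane $I^*_{a_p,c_p}$, and the $p$-component of $\type_{L_{N,N'}}(I^*_{a,c})$ depends only on $(a,c)$ modulo the relevant power of $p$; the plan is then to invoke the Chinese Remainder Theorem to choose integers $a,c$ that simultaneously reduce to the required $p$-adic data. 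The delicate points, which I expect to form the heart of the proof, are (i) arranging $\gcd(a,c)=1$ and $c\ge 0$ while meeting all the local congruences at once, and (ii) the compatibility of the global ruling label $*\in\{1,2\}$ with the prescribed local types: one must verify, reading carefully off \thref{lem:isotropic-subgroups-cusps}, that a single choice of $*$ and of $(a,c)$ indeed hits exactly the tuple $(H_p)_p$ and not a nearby one.

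Finally, for the algebra isomorphism I would transport the established set bijection to the generators via $v^H=\bigotimes_p v^{H_p}$; surjectivity onto $\bigotimes_{p\mid N}\IC[D_p]_{\Types(L^{(p)})}$ is then immediate. For injectivity it remains to show that every linear relation among the $v^H$ is induced, through the tensor decomposition, by the single-prime relations described in the introduction, i.e. that the relations factor over $p\mid N$ in the same way the types do. I expect this last step to reduce to the statement that $\IC[D_p]_{\Types(L^{(p)})}$ carries, up to scalars, the one relation recorded in that proposition, so that no relations are spuriously created or lost when passing to the tensor product; confirming that the relation module is exactly the image of the factorwise relations is the remaining technical point.
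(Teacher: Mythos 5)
Your strategy coincides with the paper's: reduce to $L_{N,N}$ via \thref{lem:sublattice-LNN}, use the explicit generators of \thref{lem:isotropic-subgroups-cusps}, and glue local data by the Chinese Remainder Theorem; well-definedness and injectivity are handled exactly as in the paper. For the algebra statement you do not actually need to track relations by hand: the paper invokes the isomorphism $\IC[L'/L]^{\SL_2(\IZ)}\cong\bigotimes_{p\mid N}\IC[D_p]^{\SL_2(\IZ)}$ of Ehlen--Skoruppa, under which $v^H\mapsto\bigotimes_{p}v^{H_p}$, so the second map is the restriction of a global isomorphism and everything reduces to the set-level bijection. Your point (i) is also harmless: the paper sidesteps it by identifying the types $H^*_{a,c}$ for fixed $*$ with cyclic subgroups of $(\IZ/N\IZ)^2$ of order $N$, which visibly factor over the primes.

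The real issue is your point (ii), which you defer as ``one must verify'': this is not a routine verification but the place where the argument breaks down. Every rational isotropic plane of $V$ belongs to one of the two rulings, i.e.\ is some $I^1_{a,c}$ or some $I^2_{a,c}$, and by \thref{lem:types-pcomponent} all of its local types are then the $p$-parts of a single global type of that \emph{same} ruling. A single choice of $(a,c)$ and of $*$, however produced by CRT, can therefore only realise tuples $(H_p)_p$ whose components all carry the same ruling label. Since for $r_p'\geq 1$ the two local families $\{(H^1_{a,c})_p\}$ and $\{(H^2_{a,c})_p\}$ are disjoint (this is the claim established in the proof of \thref{lem:space-types}), mixed tuples genuinely occur once $N'$ has two distinct prime factors, and they are not in the image. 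Concretely, for $L=L_{6,6}$ one has $|\Types(L)|\leq 2\,|\IP^1(\IZ/6\IZ)|=24$, while $|\Types(L^{(2)})|\cdot|\Types(L^{(3)})|=6\cdot 8=48$, so the first map cannot be surjective. The paper's own proof shares this gap, as it only establishes surjectivity onto tuples with constant ruling label; the statement and the CRT argument are unproblematic exactly when at most one prime sees both rulings (e.g.\ $N'=1$ or $N'$ a prime power, which covers the lattices $L_{N,1}$ and $L_{p^r,p^{r'}}$ used later). To complete your proof along these lines you would have to either restrict the target to the tuples realised by a single ruling or add such a hypothesis.
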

    \begin{proof}
        We first show the bijectivity of the first map.
        The map that associates to a self-dual isotropic subgroup $H$ its $p$-primary parts $(H_p)_{p \mid N}$ is clearly injective so it remains to show surjectivity of the first map.
        By \thref{lem:sublattice-LNN} we may choose a sublattice $L_{N,N} \subseteq L$ of finite index.
        Note that for two rational isotropic planes $I_1$ and $I_2$ such that $\mathrm{type}_{L_{N,N}}(I_1) = \mathrm{type}_{L_{N,N}}(I_2)$ also $\mathrm{type}_{L}(I_1) = \mathrm{type}_{L}(I_2)$. Hence,
        \begin{align*}
            \mathrm{Types}(L_{N,N}) & \to \mathrm{Types}(L) \\
            \mathrm{type}_{L_{N,N}}(I) &\mapsto \mathrm{type}_{L}(I)
        \end{align*}
        is a well-defined map which is surjective by construction.
        It thus suffices to show surjectivity for $L = L_{N,N}$.
        By \thref{lem:isotropic-subgroups-cusps}, for fixed $* \in \{ 1,2 \}$ the types of $L_{N,N}$ of the form $H^*_{a,c}$ are in bijection with cyclic subgroups of $(\IZ/N\IZ)^2$ of order $N$. The claim now follows from the Chinese Remainder Theorem.
        
        By \cite[Proposition 3.1]{EhlenSkoruppa} the decomposition of $L'/L$ into its $p$-primary parts induces an isomorphism 
        $$\IC[L'/L]^{\SL_2(\IZ)} \cong \bigotimes_{p \mid N} \IC[(L'/L)_p]^{\SL_2(\IZ)} \cong \bigotimes_{p \mid N} \IC\left[\left( \left(L^{(p)}\right)'/L^{(p)}\right)\right]^{\SL_2(\IZ)} $$ 
        which restricts to an isomorphism on the subspaces generated by types by the first assertion.      
    \end{proof}

    \begin{prop}
    \thlabel{lem:space-types}
        Let $L = L_{N,N'}$ for $N' \mid N$ with prime factorizations $N = \prod_{p \mid N} p^{r_p}$ and $N' = \prod_{p \mid N} p^{r_p'}$.
            Then 
            $$|\mathrm{Types}(L)| = \prod_{p \mid N'} 2((r_p-r_p'+1)p^{r_p'} - (r_p-r_p'-1) p^{r_p' - 1}) \cdot \prod_{p \nmid N'}  ({r_p}+1). $$
            Moreover, when $N = p^r$ and is a prime power and $N' \neq 1$ the different types of $L$ are precisely
            $H^*_{pa, 1}$ for $a \in \IZ/p^{r'-1} \IZ$, $H^*_{1, c p^{r-r'+1}}$ for ${c} \in \IZ/p^{r'-1} \IZ$ and $H^*_{1, up^s}$ for $1 \leq u \leq p^{r'}$ with $\gcd(u, p) = 1$ and $0 \leq s \leq r-r'$.
    \end{prop}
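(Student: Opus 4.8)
The plan is to reduce everything to the prime-power case via \thref{lem:types-factorisation} and then to enumerate the subgroups $H^*_{a,c}$ directly from the generators supplied by \thref{lem:isotropic-subgroups-cusps}. By \thref{lem:types-factorisation} the map $\mathrm{Types}(L) \to \prod_{p \mid N} \mathrm{Types}(L^{(p)})$ is a bijection, so $|\mathrm{Types}(L)| = \prod_{p \mid N} |\mathrm{Types}(L^{(p)})|$; since $L^{(p)}$ has the discriminant form of $L_{p^{r_p},p^{r_p'}}$ (with $r_p' = 0$ when $p \nmid N'$), it suffices by \thref{lem:types-pcomponent} to prove that a single prime power $L_{p^r,p^{r'}}$ has $r+1$ types when $r' = 0$ and $2((r-r'+1)p^{r'} - (r-r'-1)p^{r'-1})$ types when $r' \geq 1$. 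Multiplying these over $p \mid N$ then yields the asserted product, the factor $2(\dots)$ occurring exactly for $p \mid N'$ and the factor $r_p+1$ exactly for $p \mid N$ with $p \nmid N'$.

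For $r' = 0$ the discriminant form is $(\IZ/p^r\IZ)^2$, the last two coordinates in \thref{lem:isotropic-subgroups-cusps} drop out, and one finds $H^*_{a,c} = \langle (p^r a / M, 0), (0,c) \rangle$ with $M = \gcd(p^r,c)$, independently of $* \in \{1,2\}$. Writing $\gamma = v_p(c)$ and absorbing the units $a$ and $c/p^\gamma$, this subgroup equals $\langle (p^i,0),(0,p^{r-i}) \rangle$ with $i = r - \min(\gamma,r)$, so as $\gamma$ varies one obtains exactly the $r+1$ distinct self-dual isotropic subgroups $\langle (p^i,0),(0,p^{r-i}) \rangle$, $0 \leq i \leq r$ (distinct because the first projection has image of order $p^{r-i}$). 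As $*=1$ and $*=2$ coincide here, there are $r+1$ types.

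For $r' \geq 1$ I would fix $* = 1$, the case $* = 2$ being symmetric under interchanging the two $(\IZ/p^{r'}\IZ)$-coordinates. If $p \nmid a$, scaling the two generators of \thref{lem:isotropic-subgroups-cusps} by the unit $a^{-1}$ gives $H^1_{a,c} = H^1_{1, a^{-1}c}$, so one normalizes $a = 1$; if $p \mid a$ then $p \nmid c$ and the analogous scaling normalizes $c = 1$. In the normalized form $a = 1$, setting $m = \min(r-r', v_p(c))$ the generators are $(p^{r-r'-m},0,0,-c/p^m)$ and $(0,c,1,0)$; computing the images of $H^1_{1,c}$ under the projections to the first and to the fourth coordinate shows that the type is determined exactly by $\min(v_p(c), r-r')$ together with $(c/p^m) \bmod p^{r'}$, and that these data can be read back off from the subgroup. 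This produces precisely the representatives $H^1_{1,up^s}$ with $0 \leq s \leq r-r'$ and $u \in (\IZ/p^{r'}\IZ)^\times$ (when $v_p(c) \leq r-r'$) and $H^1_{1,cp^{r-r'+1}}$ with $c \in \IZ/p^{r'-1}\IZ$ (when $v_p(c) > r-r'$), while the normalized case $p \mid a$, $c=1$ gives the $p^{r'-1}$ representatives $H^1_{pa,1}$ with $a \in \IZ/p^{r'-1}\IZ$; the latter are disjoint from the former because their image under the third-coordinate projection is the proper subgroup $p\IZ/p^{r'}\IZ$ rather than all of $\IZ/p^{r'}\IZ$. Counting gives $2p^{r'-1} + (r-r'+1)(p^{r'}-p^{r'-1}) = (r-r'+1)p^{r'} - (r-r'-1)p^{r'-1}$ types with $*=1$.

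It then remains to check that no $H^1_{a,c}$ equals any $H^2_{a',c'}$, which doubles the count and verifies the explicit list. Both families have the same image in the $(w,x)$-coordinates, so they must be separated by the way $H$ correlates the first two coordinates with the last two: in $H^1$ the $w$-direction is linked to the $z$-coordinate and the $x$-direction to the $y$-coordinate, whereas in $H^2$ these links are interchanged, and this correlation is an invariant of the subgroup which is nontrivial precisely because $r' \geq 1$ (consistently with the coincidence $H^1 = H^2$ found above when $r' = 0$). I expect the main obstacle to lie in the last two steps of the $r' \geq 1$ analysis: carefully tracking $v_p(a)$, $v_p(c)$ and the auxiliary quantity $M = \gcd(p^{r-r'},c)$ in order to prove both that every type appears in the list (surjectivity of the parametrization) and that the listed representatives are pairwise distinct (injectivity), and ruling out all coincidences between the $*=1$ and $*=2$ families.
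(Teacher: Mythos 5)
Your overall strategy coincides with the paper's: reduce to $N=p^r$, $N'=p^{r'}$ via \thref{lem:types-factorisation}, normalize $(a,c)$ so that $a=1$ or $c=1$, and count the subgroups $H^1_{a,c}$ by deciding exactly when two parameter choices give the same subgroup. Your enumeration of the $*=1$ family (the three sub-families $H^1_{1,up^s}$ with $s\le r-r'$, $H^1_{1,cp^{r-r'+1}}$, and $H^1_{pa,1}$, with the identifications of parameters and the total $2p^{r'-1}+(r-r'+1)(p^{r'}-p^{r'-1})$) matches the paper's computation, and your direct treatment of the $r'=0$ case is a fine substitute for the paper's citation of \cite[Corollary 4.3]{Bieker}.

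The genuine gap is the step you yourself flag at the end: the disjointness of the families $\{H^1_{a,c}\}$ and $\{H^2_{a',c'}\}$ when $N'>1$, which is what produces the factor $2$ in the count. Your proposed argument --- that the subgroup remembers whether the $w$-direction is ``linked'' to the $z$-coordinate or to the $y$-coordinate --- is not an invariant you have defined, and taken literally it breaks down in degenerate cases: for instance $H^1_{1,up^s}$ with $s>r-r'$ has first generator $(1,0,0,-up^{r'-(r-s)})$ whose $z$-component is divisible by $p$ (and is $0$ when $s=r$), so there is no clean $w$--$z$ correlation to read off; similarly the $H^1_{up^s,1}$ with $s\ge 1$ have second generator $(0,1,pa,0)$ whose $y$-component is not a unit. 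Making the idea precise forces exactly the case distinction the paper carries out: it splits into the three cases $\gcd(c,p)=1$; $\gcd(a,p)=1$ with $s\le r-r'$; and $\gcd(a,p)=1$ with $s>r-r'$, and in each case exhibits an explicit element (e.g.\ $(0,1,up^s,0)$ in the first case) whose membership in some $H^2_{a',c'}$ forces $a=a'=0$ or $c=c'=0$, contradicting $\gcd(a,c)=1$ or $N'>1$. Without this (or an equivalent concrete argument) the claimed cardinality $2(\cdots)$ for $p\mid N'$, and hence the explicit list of types in the statement, is not established.
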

    
    \begin{proof}
        By \thref{lem:types-factorisation} it suffices to consider the case $N = p^r$ and $N' = p^{r'}$.
        As $\gcd(a,c) = 1$, we have $\gcd(a,p) = 1$ or $\gcd(c,p) = 1$.
        As $H^*_{a,c}$ does not change when we multiply both $a$ and $c$ with a unit in $\IZ/p^r \IZ$, every type is of the form $H^*_{1, up^s}$ for some $0 \leq s \leq r$ and $u \in (\IZ/p^{r'} \IZ)^\times$ or $H^*_{up^s, 1}$ for some $0 \leq s \leq r'$ and $u \in (\IZ/p^{r'} \IZ)^\times$.
        
        In order to count the number of types we first note that when $N' = 1$ we always have $H^1_{a,c} = H^2_{a,c}$. We claim that when $N' \neq 1$ a subgroup of the form $H^1_{a,c}$ is never of the form $H^2_{a,c}$ and vice versa.
        In order to show this claim we distinguish three cases.
        Let first $\gcd(c,p) = 1$. Then $H^1_{a,c} = H^1_{up^s,1}$ by the discussion above.
        Now if $(0, 1, u p^s, 0) \in H^2_{a',c'}$ for some $a',c'$, then $\gcd(c',p) = 1$ and $a = a' = 0$. But then $(0,0,0,1) \not \in H^2_{a',c'}$ if $N' > 1$. 
        Let now $\gcd(a,p) = 1$, i.e. $H^1_{a,c} = H^1_{1, up^s}$ and assume $s \leq r -r'$. In this case if $(0, u p^s, 1, 0), (u^{-1}p^{r-r'-s}, 0 ,0 ,-1) \in H^2_{a',c'}$ for some $a',c'$, we find $\gcd(a', p) = 1$ and $c' = 0$, hence $c = 0$ which gives a contradiction.
        Lastly, assume $\gcd(a,p) = 1$ and $s > r -r'$. If in this case $H^1_{1, up^s} = H^2_{a',c'}$ for some $a',c'$, we find $\gcd(a',p) = p^{s+r'-r} > 1$ so $\gcd(c', p) = 1$, which implies $p^s = \gcd(c, p^r) = 1$ contradicting our assumption.

        The remaining claims in the $N' = 1$ case are \cite[Corollary 4.3]{Bieker}. Let us thus assume $r' \geq 1$.
        By the claim in the previous paragraph it suffices to count the subgroups of the form $H^1_{a,c}$ in order to establish the assertion.
        Let us first assume that $p^s = \gcd(c,p^r) \leq p^{r-r'}$ and $a = 1$. In this case $H^1_{1,up^s}$ is generated by $(p^{r-r'-s},0,0,-u)$ and $(0,p^s,u^{-1},0)$. In particular $H^1_{1,up^s} = H^1_{1, u'p^{s'}}$ for some $u'$ and $s'$ if and only if $u = u'$ in $(\IZ/p^{r'}\IZ)^\times$ and $s = s'$. 
        When $c = p^s$ for $s > r-r'$ (which implies $a = 1$), we find $H^1_{1,up^s} = H^1_{1, u'p^{s'}}$ if and only if $u = u'$ in $(\IZ/p^{r-s}\IZ)^\times$ and $s = s'$ as $H^1_{1,up^s}$ is generated by $(1,0,0, - up^{r'-(r-s)})$ and $(0, up^s, 1,0)$.
        Similarly, $H^1_{up^s,1} = H^1_{u'p^{s'},1}$ if and only if $s = s'$ and $u = u'$ in $(\IZ/p^{r' - s} \IZ)^\times$. Hence there are precisely $p^{r'}$ subgroups of this form.
        Moreover, $H^1_{1, up^s} = H^1_{u'p^{s'}, 1}$ if and only if $s = s' = 0$ and $u = (u')^{-1}$.
    \end{proof}

    \begin{prop}
            \thlabel{lem:space-types-relation}
            If $N = p^r$ is a prime power, then
            \begin{align}
                 & \sum_{a \in p \IZ/p^{r'} \IZ} v^{H^1_{a,1}}  +\sum_{c \in p^{r-r'}\IZ/p^{r}\IZ} v^{H^1_{1,c}} +  \sum_{ \substack{ u \in (\IZ/p^{r'}\IZ)^\times \\ 0 \leq s \leq r-r' }}  v^{H^1_{1, up^s}} \nonumber \\
                & = \sum_{a \in p \IZ/p^{r'} \IZ} v^{H^2_{a,1}}  +\sum_{c \in p^{r-r'}\IZ/p^{r}\IZ} v^{H^2_{1,c}} +  \sum_{ \substack{ u \in (\IZ/p^{r'}\IZ)^\times \\ 0 \leq s \leq r-r' }}   v^{H^2_{1, up^s}}
                \label{lem:space-types-relation-eq}
            \end{align}            
            and this is up to multiplication by scalars the only non-trivial linear relation among the characteristic functions of types of one-dimensional cusps.      
    \end{prop}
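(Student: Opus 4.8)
The statement has two independent parts—the displayed identity and the assertion that it generates the whole space of relations among the $v^{H^*_{a,c}}$—and I would prove them in turn. For the identity I expand $v^{H^*_{a,c}}=\sum_{\gamma\in H^*_{a,c}}\frake_\gamma$ and reduce to the coefficientwise claim that, for every $\gamma\in L'/L\cong(\IZ/N\IZ)^2\oplus(\IZ/N'\IZ)^2$, the coefficient of $\frake_\gamma$ on the two sides agrees, i.e. the (weighted) number of first-kind types occurring on the left and containing $\gamma$ equals the corresponding number of second-kind types on the right. The organizing device is the involution $\sigma\colon(w,x,y,z)\mapsto(w,x,-z,-y)$ of $L'/L$. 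A direct check on the generators in \thref{lem:isotropic-subgroups-cusps} shows that $\sigma$ fixes $q(w,x,y,z)=\tfrac{wx}{N}+\tfrac{yz}{N'}$ and satisfies $\sigma(H^1_{a,c})=H^2_{a,c}$; since it sends $H^1_{a,c}$ to $H^2_{a,c}$ with the same indices $(a,c)$, it is a coefficient-preserving bijection between first- and second-kind types and therefore carries the left-hand side of the relation to the right-hand side. Consequently the identity is equivalent to $\sigma$-invariance of the left-hand side as a function on $L'/L$.

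To establish this $\sigma$-invariance I translate $\gamma\in H^1_{a,c}$ into the system $w\equiv\tfrac{Na}{N'M}\alpha$, $x\equiv c\beta\ (\mathrm{mod}\ N)$, $y\equiv a\beta$, $z\equiv-\tfrac{c}{M}\alpha\ (\mathrm{mod}\ N')$ with $M=\gcd(N/N',c)$ coming from the two generators, solve for the admissible $(\alpha,\beta)$, and then pass from pairs $(a,c)$ to distinct types using the classification in \thref{lem:space-types}. This expresses each coefficient as an explicit function of the valuations $\ord_p(w),\ord_p(x),\ord_p(y),\ord_p(z)$. Because $\sigma$ interchanges exactly the coordinates $y$ and $z$ (up to sign), $\sigma$-invariance amounts to the symmetry of this function under exchanging the $y$- and $z$-valuations, which I verify by a short case analysis; this valuation bookkeeping is the computational content of the identity.

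For the uniqueness I show that the relation space $R=\ker\bigl((\lambda_H)_H\mapsto\sum_H\lambda_H v^H\bigr)$ satisfies $\dim R=1$. That $\dim R\ge 1$ is exactly the identity just proved. For the reverse bound I produce, for suitable pairs consisting of one first-kind type $H^1$ and one second-kind type $H^2$, an element $\gamma\in L'/L$ lying in \emph{exactly} these two types; such a $\gamma$ forces the ``edge'' relation $\lambda_{H^1}+\lambda_{H^2}=0$. The generators of \thref{lem:isotropic-subgroups-cusps} and simple combinations such as $(0,0,1,0)$, $(p,0,u,0)$, and $(1,0,0,-u)$ are the natural candidates, since their concentration in a single hyperbolic block confines them to very few types. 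I then check that these edges connect all of $\Types(L)$ into a single bipartite component whose two parts are the first- and second-kind types; connectivity of a bipartite graph forces $\lambda$ to be constant on each part with opposite signs, so $R$ is spanned by the single relation $f_1-f_2$ where $f_1,f_2$ denote the two sides.

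The main obstacle is this connectivity step. As $r-r'$ grows the family $H^1_{1,up^s}$ with $0\le s\le r-r'$ becomes large, and for each such type one must exhibit a distinguishing element meeting only one second-kind type; verifying that the resulting edges reach every type (rather than splitting $\Types(L)$ into several components) is where the real work lies. By contrast, once $\sigma$ has collapsed the valuation cases the identity is a routine, if lengthy, finite check.
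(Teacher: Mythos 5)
Your overall strategy coincides with the paper's: both arguments verify the identity by counting, for each $\gamma\in L'/L$, the types containing $\gamma$, and both derive uniqueness from elements lying in exactly two types. Your involution $\sigma(w,x,y,z)=(w,x,-z,-y)$ is a genuinely nice organizing device --- it preserves $q$ and carries the generators of $H^1_{a,c}$ from \thref{lem:isotropic-subgroups-cusps} to those of $H^2_{a,c}$, so it correctly reduces the identity to $\sigma$-invariance of the left-hand side. Note, though, that this is the same computation in disguise: the $H^2$'s containing $\gamma$ are exactly the $\sigma$-images of the $H^1$'s containing $\sigma(\gamma)$, so checking $\sigma$-invariance of the left-hand side is the paper's two-sided count reindexed. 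Two points need repair. First, the coefficient of $\frake_\gamma$ on the left-hand side is \emph{not} a function of the valuations of $w,x,y,z$ alone: already for $N=N'=p$ with $p$ odd, the elements $(1,1,1,-1)$ and $(1,1,1,1)$ have identical valuations but coefficients $1$ and $0$, since membership in any type forces $q(\gamma)=0$. You must at least restrict to isotropic $\gamma$, and even then the claim that the count depends only on valuations is an assertion requiring proof rather than bookkeeping; the unit parameters $u$ in $H^1_{1,up^s}$ enter through congruences, not just valuations, and only the sum over $u$ can be expected to collapse. Second, your uniqueness step via connectivity of the bipartite graph is where you rightly locate the main obstacle, but it is more work than necessary: the paper observes that the element-by-element verification of the identity already exhibits, for \emph{every} pair consisting of one type from the left and one from the right, an element of $L'/L$ lying in precisely those two subgroups. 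The graph is therefore complete bipartite, and the relation space is one-dimensional with no connectivity analysis at all. If you record these distinguished elements as a byproduct of your case analysis for the identity, the uniqueness assertion comes for free.
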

    Note that when $N' = p$ this is essentially contained in \cite[Proposition 4.12]{Bieker}. The linear relation \eqref{lem:space-types-relation-eq} is the sum of the $r$-many linear relations of \cite[Proposition 4.12]{Bieker}.  
    \begin{proof}
        The assertion follows as in the proof of \cite[Proposition 4.12]{Bieker}. Let us sketch the argument. 
        We first verify that the linear relation \eqref{lem:space-types-relation-eq} holds by checking that every element of $L'/L$ is contained in as many subgroups appearing on the left hand side of the equation as on the right hand side. For the zero vector this is clear. 

        We first consider elements of $H^1_{a,1}$ for $a \in p \IZ/p^{r'} \IZ$. Let us now consider $\lambda = x(ap^{r-r'}, 0, 0, -1) + y(0,1,a,0)$ with $x \in \IZ/p^{r'} \IZ$ and $y \in \IZ/p^r \IZ$ with $\lambda \neq 0$. Let $r_x$ such that $\gcd(p^{r'}, x) = p^{r_x}$ and similarly define $r_y$. Then $\lambda$ is contained precisely in $H^1_{\tilde a,1}$ with $xa = x\tilde a$ and $ya = y \tilde a$. Thus there are precisely $p^{\min(r_x, r_y)}$ such subgroups. Moreover, $\lambda \not \in H^1_{1,c}$ for any $c$.
        When $r_y < r_x$ we find $\lambda \in H^2_{\tilde a, 1}$ for all $\tilde a \in \IZ/p^{r'} \IZ$ with $y\tilde a = x$ in $\IZ/p^{r'} \IZ$ and when $r_x \leq r_y$ we have $\lambda \in H^2_{1,\tilde c}$ with $x \tilde c = y$.
        The other cases can be checked similarly.

        In order to see that \eqref{lem:space-types-relation-eq} is the only relation (up to scalar multiplication) we note that for every pair of subgroups $H^1_{*,*}$ and $H^2_{*,*}$ appearing on the left hand side respectively on the right hand side of the equation we found an element of $L'/L$ appearing in precisely these two subgroups.
    \end{proof}

    \begin{cor}
        \thlabel{lem:space-types-dimension}
        Let $L = L_{N,N'}$ as above. Then the dimension of $ \IC[L'/L]_{\mathrm{Types}(L)}$ is given by 
            \begin{equation}
                \prod_{p \mid N'} \left(2((r_p-r_p'+1)p^{r_p'} - (r_p-r_p'-1) p^{r_p' - 1}) - 1\right)  \cdot \prod_{p \nmid N'}  ({r_p}+1).
                \label{lem:space-types-dimension-eq}
            \end{equation}  
            The characteristic functions of types span the full space of invariants if and only if $N$ is square-free.        
    \end{cor}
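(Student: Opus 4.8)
The plan is to establish the dimension formula first and then the spanning criterion, in both cases reducing to a single prime by the factorisations already in hand.

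\emph{Dimension formula.} By \thref{lem:types-factorisation} we have $\IC[L'/L]_{\mathrm{Types}(L)} \cong \bigotimes_{p \mid N} \IC[(L^{(p)})'/L^{(p)}]_{\mathrm{Types}(L^{(p)})}$, and since the dimension of a tensor product is the product of the dimensions it suffices to treat $N = p^r$, $N' = p^{r'}$. If $r' \geq 1$, then $\mathrm{Types}(L)$ has cardinality $2((r-r'+1)p^{r'}-(r-r'-1)p^{r'-1})$ by \thref{lem:space-types}, and by \thref{lem:space-types-relation} the characteristic functions $v^H$ satisfy exactly one linear relation up to scalars; hence the local dimension is this cardinality minus one. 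If $r' = 0$, then $H^1_{a,c}=H^2_{a,c}$, there are $r+1$ types by \thref{lem:space-types}, and they are linearly independent by \cite[Corollary 4.3]{Bieker}, so the local dimension is $r+1$. Multiplying over $p \mid N$ --- the factor being $2(\cdots)-1$ for the primes with $r_p'\geq 1$, i.e.\ $p\mid N'$, and $r_p+1$ for the primes $p \mid N$ with $p \nmid N'$ --- yields exactly \eqref{lem:space-types-dimension-eq}.

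\emph{Spanning criterion.} Since $\IC[L'/L]_{\mathrm{Types}(L)} \subseteq \IC[L'/L]^{\SL_2(\IZ)}$ always holds, equality is equivalent to equality of dimensions, and by \cite[Proposition 3.1]{EhlenSkoruppa} the invariant space also factors as $\bigotimes_{p \mid N} \IC[(L^{(p)})'/L^{(p)}]^{\SL_2(\IZ)}$, so it suffices to argue prime by prime; as $N$ is square-free exactly when $r_p = 1$ for all $p \mid N$, the claim reduces to showing that the types span the full invariants at $p$ if and only if $r_p = 1$. When $r_p = 1$ every self-dual isotropic subgroup of $(L^{(p)})'/L^{(p)}$ is a type --- for $r_p' = 1$ this is \cite[Proposition 4.10]{Bieker}, and for $r_p' = 0$ the form is $(\IZ/p\IZ)^2$ with exactly its two isotropic lines, both types --- and since the invariants are generated by characteristic functions of self-dual isotropic subgroups (cf.\ \cite{Bieker}), the types span.

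It remains to show that for $r_p \geq 2$ the span is proper. Here I would exhibit a self-dual isotropic subgroup $H_0$ of $(L^{(p)})'/L^{(p)}$ that is not a type --- such subgroups exist by the remark following \thref{lem:isotropic-subgroups-cusps} --- together with an isotropic element $\gamma_0 \in H_0$ lying in no type $H^*_{a,c}$; then the characteristic function of every type has vanishing $\frake_{\gamma_0}$-coefficient while $v^{H_0}$ does not, so the invariant $v^{H_0}$ is not in the span of the types and the inclusion is strict. The main obstacle is precisely the construction of this distinguishing vector for all $r_p \geq 2$, which calls for a careful analysis of the explicit generators of the types in \thref{lem:isotropic-subgroups-cusps} to verify that their union does not exhaust the isotropic vectors contained in self-dual isotropic subgroups; alternatively, one may compare \eqref{lem:space-types-dimension-eq} directly with the dimension of $\IC[(L^{(p)})'/L^{(p)}]^{\SL_2(\IZ)}$ and check the strict inequality when $r_p \geq 2$.
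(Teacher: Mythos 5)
Your dimension count is correct and is the paper's own argument: reduce to a prime power via \thref{lem:types-factorisation}, count the types with \thref{lem:space-types}, and subtract one for the unique relation of \thref{lem:space-types-relation} when $r_p' \geq 1$ (resp.\ invoke linear independence from \cite[Corollary 4.3]{Bieker} when $r_p'=0$). For the spanning criterion your ``if'' direction is a legitimate variant: instead of comparing dimensions you observe that for $r_p=1$ every self-dual isotropic subgroup of $D_p$ is a type and that the invariants are spanned by characteristic functions of self-dual isotropic subgroups. That part is fine.

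The genuine gap is the ``only if'' direction, i.e.\ that the span is proper whenever some $r_p \geq 2$, which you leave as two unexecuted options. The first option does not obviously work as stated: the existence of a self-dual isotropic subgroup $H_0$ that is not a type does not by itself produce an isotropic element $\gamma_0 \in H_0$ lying in \emph{no} type, and conversely the span could be proper even if every isotropic vector lies in some type (linear dependence is not detected coefficient-by-coefficient in this way), so this route requires a genuinely new computation that you have not supplied. Your second option is exactly what the paper does, but it hinges on knowing $\dim \IC[(L^{(p)})'/L^{(p)}]^{\SL_2(\IZ)}$; the paper imports this from the dimension formula of \cite[Theorem 5.4, Discussion after Corollary 5.5]{Zemel2021} and compares it with \eqref{lem:space-types-dimension-eq}, whereas you neither state that formula nor carry out the comparison. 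As written, the final sentence of the corollary is therefore only proved in one direction.
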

    \begin{proof}
        The dimension formula follows directly from \thref{lem:space-types} together with \thref{lem:space-types-relation} and \cite[Corollary 4.3]{Bieker}. The last assertion follows by comparing \eqref{lem:space-types-dimension-eq} with the dimension formula for $\IC[L'/L]^{\SL_2(\IZ)}$ of \cite[Theorem 5.4, Discussion after Corollary 5.5]{Zemel2021}.
    \end{proof}
	
	\section{Borcherds products in signature \texorpdfstring{$(2,2)$}{(2,2)}}

	Let us recall the construction of Borcherds products following \cite{Borcherds, BruinierHabil}. Let again $L$ be an even lattice of signature $(2,2)$ and Witt rank 2. We consider the theta function
	\begin{align*}
		\Theta_L(\tau, Z) &= v \sum_{\lambda \in L'} e\left(\frac{iv}{2 y_1 y_2} \lvert (\lambda, Z_L) \rvert^2\right) \frake_\lambda(q(\lambda) \overline{\tau})
	\end{align*}

    and define the regularized theta lift of a harmonic weak Maass form $f \in H_{0, L}$ by
    $$\Phi(Z, f) = \int_{\Mp_2(\IZ) \bs \IH}^{\reg} \langle f(\tau), \Theta_L(\tau, Z) \rangle \frac{du dv}{v^2}.$$

    Recall the setup from the beginning Section \ref{sec:OneDimCusps}.
    We fix a primitive isotropic vector $z \in L$ together with $z' \in L'$ such that $(z,z') = 1$. 
	We set $K = L \cap z^\bot \cap (z')^\bot$ and get a projection $p_L \colon L_0' \to K'$
	We again choose a primitive isotropic vector $\tilde{z} \in K$ and $\tilde{z}' \in K'$ with $(\tilde{z}, \tilde{z}') = 1$. 
	For $f \in H_{0,L}^!$ we write
	$$f_K = \sum_{\gamma \in K' / K} \sum_{\substack{\delta \in L_0'/L  \\ p(\delta) = \gamma}} f_{\delta} \frake_\gamma$$
	and
	$$f_{\{0\}} =  \sum_{\gamma \in K'_0 / K} \sum_{\substack{\delta \in L_0'/L  \\ p(\delta) = \gamma}} f_{\delta}.$$
    \begin{lem}
        The isomorphism $p_L \colon L_0'/L_0 \xrightarrow{\cong} K'/K$ identifies $f_K = \downarrow^L_{L_0}(f)$.
        Similarly, $f_{\{0\}} = \downarrow^L_{L_{00}}(f) = \downarrow^L_{\mathrm{type}_L(I)} f$ where $I$ is the isotropic plane generated by $z$ and $\tilde z$.
    \end{lem}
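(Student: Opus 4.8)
The goal is to identify the $K$-theta-lift coefficient vectors $f_K$ and $f_{\{0\}}$ with the isotropic descent operators $\downarrow^L_{L_0}(f)$ and $\downarrow^L_{L_{00}}(f)$. The plan is to unwind both definitions purely in terms of the Fourier coefficients $f_\delta$ and then match them index by index, using the isomorphism $p_L \colon L_0'/L_0 \xrightarrow{\cong} K'/K$ of \thref{lem:projection-p-isos} to translate between the two index sets.

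First I would recall that, by definition, $\downarrow^L_{L_0}(f) = \downarrow^L_{H}(f)$ where $H = L_0/L$ is the isotropic subgroup of $L'/L$ generated by the image of $\frac{z}{N_z}$, so that $H^\perp/H = L_0'/L_0$. The isotropic descent sends $\frake_\delta$ to $\frake_{\delta + H}$ when $\delta \in H^\perp = L_0'/L$ and to $0$ otherwise. Hence the $\frake_{\gamma}$-component of $\downarrow^L_{L_0}(f)$, read through the identification $L_0'/L_0 \cong K'/K$, is exactly $\sum_{\delta} f_\delta$ where $\delta$ runs over the classes in $L_0'/L$ mapping to $\gamma$ under $p_L$. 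This is term-by-term the defining formula for $f_K$, so the first identification is a direct comparison of the two expressions once one checks that the condition $\delta \in L_0'$ matches the support condition in the descent operator. The analogous statement for $f_{\{0\}}$ follows by composing two descents: $\downarrow^L_{L_{00}} = \downarrow^{L_0}_{L_{00}} \circ \downarrow^L_{L_0}$, where the second step is the descent along the isotropic subgroup generated by the image of $\frac{\tilde z}{N_{\tilde z}}$ in $L_0'/L_0 \cong K'/K$. Concretely, the inner summation $\sum_{\gamma \in K_0'/K}$ in the definition of $f_{\{0\}}$ is precisely the descent from $K'/K$ to $K_0'/K_0 = \{0\}$, i.e. the evaluation picking out those $\gamma$ lying in $K_0'/K$.

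The key structural input is that $\mathrm{type}_L(I)$, for $I = \langle z, \tilde z\rangle$, equals the isotropic subgroup $L_{00}/L$: by the Corollary after \thref{lem:projection-p-isos}, the type is generated by the images of $\frac{z}{N_z}$ and $\frac{\tilde z}{N_{\tilde z}}$, and these are exactly the generators of $L_{00}/L$, which is what lets me rewrite $\downarrow^L_{L_{00}}$ as $\downarrow^L_{\mathrm{type}_L(I)}$. So the chain of equalities $f_{\{0\}} = \downarrow^L_{L_{00}}(f) = \downarrow^L_{\mathrm{type}_L(I)}(f)$ reduces to identifying $L_{00}/L$ with $\mathrm{type}_L(I)$ as isotropic subgroups of $L'/L$.

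The main obstacle, and the only place where care is genuinely needed, is the bookkeeping of the two successive quotients and their dual lattices: one must verify that descending first along $z$ and then along $\tilde z$ is compatible with the description $L_{00}' = $ (span of $L$, $\frac{z}{N_z}$, $\frac{\tilde z}{N_{\tilde z}}$)$'$, and that under $p_L$ the subgroup $K_0'/K \subseteq K'/K$ corresponds exactly to the image of $L_{00}'/L_0 \subseteq L_0'/L_0$. This is where the modified projection of \cite[(2.7)]{BruinierHabil} enters, and it is essentially a routine but delicate check that $p_L$ is an isometry of discriminant forms intertwining the two descent operations. Everything else is formal manipulation of the defining sums.
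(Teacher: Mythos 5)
Your proposal is correct and follows the same route as the paper, whose proof is simply the one-line remark that the statement follows from \thref{lem:projection-p-isos} and the definition of isotropic descent; you have merely spelled out the index-matching and the identification $L_{00}/L = \mathrm{type}_L(I)$ that the authors leave implicit.
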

    \begin{proof}
        This follows from \thref{lem:projection-p-isos} and the definition of isotropic descent.
    \end{proof}
	
    Using Lemma \ref{lem:HarmonicMassFormDecomposition} it suffices to consider invariant vectors and Hejhal Poincar\'e series independently.

    \begin{thm}[{\cite[Section 2, 3]{BruinierHabil}}]\label{thm:BorcherdsProductsHejhalPoincareSeries}
        Let $W$ be a Weyl chamber that contains $z$ in its closure. Let $f \in H_{0, L}$ be a harmonic weak Maass form which is a linear combination of Hejhal Poincar\'e series. Then $\Phi(Z, f)$ is given by
        \begin{align*}
            &8 \pi (\rho(K, W, f_K), Y) + C_f - c^+(0, 0) \log(Y^2)\\
            &- 4 \sum_{\substack{\lambda \in K' \\ (\lambda, W) > 0}} \sum_{\substack{\delta \in L_0' / L \\ \pi(\delta) = \lambda + K}} c^+(\delta, q(\lambda)) \log\lvert 1 - e((\lambda, Z) + (\delta, z'))\rvert \\
            &- 4 \sum_{\substack{\lambda \in K' \\ (\lambda, W) > 0}} \sum_{\substack{\delta \in L_0' / L \\ \pi(\delta) = \lambda + K}} c^-(\delta, -q(\lambda)) \log\lvert 1 - e((\lambda, \widetilde{Z}) + (\delta, z'))\rvert,
        \end{align*}
        where $\widetilde{Z} = (z_1, -\overline{z}_2), C_f$ is a constant depending on $f$ and
        $$\rho(K, W, f_K) = \frac{1}{2} \sum_{\substack{b_1 \in \IZ / N_z \IZ \\ b_2 \in \IZ / N_{\tilde{z}} \IZ}} c^+\left(\frac{b_1 z}{N_z} + \frac{b_2 \tilde z}{N_{\tilde{z}}}, 0\right) B_2(b_2 / N_{\tilde{z}}) \tilde{z}$$
        is the so called Weyl vector.
    \end{thm}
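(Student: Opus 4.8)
The plan is to evaluate the regularized integral by unfolding it along the chosen isotropic direction $z$, following Borcherds' method as developed in \cite[Sections 2, 3]{BruinierHabil}. Since $f$ is assumed to be a linear combination of Hejhal Poincar\'e series, Lemma \ref{lem:HarmonicMassFormDecomposition} shows that $f$ has no invariant-vector part, so the entire contribution to $\Phi(Z, f)$ comes from its non-constant Fourier coefficients. First I would rewrite the Siegel theta function $\Theta_L(\tau, Z)$ in terms of the theta function of the rank-two sublattice $K = L \cap z^\bot \cap (z')^\bot$ of signature $(1,1)$. The standard tool is Poisson summation: decomposing each $\lambda \in L'$ into its $K'$-component and its components along $z, z'$ and dualizing the summation over the $z$-direction introduces an auxiliary index $n \in \IZ$. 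Under the isomorphism $L_0'/L_0 \cong K'/K$ of \thref{lem:projection-p-isos}, this expresses the $\IC[L'/L]$-valued theta function as a sum over $K'/K$ of Gaussians in the $Y$-variable multiplied by exponentials in the $z'$-direction.

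Next I would substitute this decomposition together with the Fourier expansion of $f$ into the regularized integral and unfold against the stabilizer $\Gamma_\infty \subset \Mp_2(\IZ)$ of the cusp attached to $z$, which is generated by $\tau \mapsto \tau + 1$. The unfolding replaces the integral over the fundamental domain $\Mp_2(\IZ) \bs \IH$ by an integral over the strip $\{ \tau = u + iv : 0 \leq u < 1,\ v > 0 \}$, in which the $u$-integration becomes an orthogonality relation selecting, for each $\lambda \in K'$ and each residue $\delta$, the Fourier coefficient $c^+(\delta, q(\lambda))$ or $c^-(\delta, -q(\lambda))$ of $f$ through its pairing with the factor $e(q(\lambda)\overline\tau)$ in $\Theta_L$. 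The sign condition $(\lambda, W) > 0$ imposed by the Weyl chamber $W$ fixes which of $\pm\lambda$ contributes in each term.

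Then I would evaluate the remaining $v$-integrals term by term. The index $n = 0$ produces the contributions that are polynomial or logarithmic in $Y$: integrating the Gaussian of the $K$-theta function against the zeroth Fourier coefficient yields, through the Bernoulli polynomial $B_2$, the Weyl vector term $8\pi(\rho(K, W, f_K), Y)$ together with $-c^+(0,0)\log(Y^2)$ and the constant $C_f$. The indices $n \neq 0$ produce, after summing the elementary series $-\log(1-x) = \sum_{m \geq 1} x^m/m$ and taking real parts, the two logarithmic sums over $\lambda \in K'$ with $(\lambda, W) > 0$. The holomorphic coefficients $c^+$ give the first sum in the variable $Z$, whereas the non-holomorphic coefficients $c^-$, whose incomplete gamma factor $\Gamma(1, -4\pi n v)$ records the complex conjugation in the second variable, give the second sum in $\widetilde Z = (z_1, -\overline z_2)$.

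The main obstacle I expect is the regularization at weight $k = 0$. Because the Hejhal Poincar\'e series $F_{\beta, m}(\tau, 1)$ are only defined through Hecke summation at this weight, one must carry the auxiliary spectral parameter $s$ through the computation, perform the unfolding and the $v$-integrals in a region where everything converges absolutely, and only then analytically continue to the relevant value, verifying that no spurious polar terms appear and that interchanging the regularized integral with the summations is justified. This is precisely the point at which the signature $(2,2)$, Witt rank $2$ situation is more delicate than the case $n \geq 3$; once it is settled, each individual term is computed exactly as in \cite[Sections 2, 3]{BruinierHabil}, and the rest is bookkeeping to match the normalizations.
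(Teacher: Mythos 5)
Your proposal follows the same route as the paper: both defer the analytic work (Poisson summation along $z$, unfolding against $\Gamma_\infty$, evaluation of the $v$-integrals, and the Hecke-summation subtlety at weight $0$) to \cite[Sections 2, 3]{BruinierHabil}. The paper's written proof consists only of the signature-$(2,2)$ specializations of Bruinier's general formulas --- the elementary evaluation of the special function $\calV_2$ via $K_{-1/2}$, the identity $\sqrt{\lvert\lambda\rvert^2 Y^2 + (\lambda,Y)^2} = \lvert(\lambda,Y)\rvert$ that turns the $c^-$-terms into logarithms in $\widetilde Z$, and the computation of the $K$-lift as a Bernoulli-polynomial expression yielding the Weyl vector --- all of which your outline subsumes.
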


    \begin{proof}
        This follows from \cite[Section 2, 3]{BruinierHabil} using that the special function $\calV_2(A, B)$ occurring in the Fourier expansion is given by
        $$\calV_{2}(A, B) = \sqrt{\pi} (2 \cdot 2 \sqrt{A^2 + B^2} / \pi)^{1/2} K_{-1/2}(2 \sqrt{A^2 + B^2}) = \sqrt{\pi} e^{-2 \sqrt{A^2 + B^2}}.$$
        By \cite[Section 3.2]{BruinierHabil} using
        $$\sqrt{\lvert \lambda \rvert^2 Y^2 + (\lambda, Y)^2} = \lvert (\lambda, Y) \rvert$$
        we obtain the Fourier expansion
        \begin{align*}
            &\frac{\lvert Y \rvert}{\sqrt{2}} \Phi^{K}(Y / \lvert Y \rvert, f) + C_f - c^+(0, 0) \log(Y^2)\\
            &- 4 \sum_{\substack{\lambda \in K' \\ (\lambda, W) > 0}} \sum_{\substack{\delta \in L_0' / L \\ \pi(\delta) = \lambda + K}} c^+(\delta, q(\lambda)) \log\lvert 1 - e((\lambda, Z) + (\delta, z'))\rvert \\
            &+ 2 \sum_{\substack{\lambda \in K' \\ q(\lambda) < 0}} \sum_{\substack{\delta \in L_0' / L \\ \pi(\delta) = \lambda + K}} c^-(\delta, q(\lambda)) \sum_{n = 1}^\infty \frac{1}{n} e(ni\sqrt{\lvert \lambda \rvert^2 Y^2 + (\lambda, Y)^2} + n(\lambda, X) + n(\delta, z')) \\
            &= \frac{\lvert Y \rvert}{\sqrt{2}} \Phi^{K}(Y / \lvert Y \rvert, f) + C_f - c^+(0, 0) \log(Y^2)\\
            &- 4 \sum_{\substack{\lambda \in K' \\ (\lambda, W) > 0}} \sum_{\substack{\delta \in L_0' / L \\ \pi(\delta) = \lambda + K}} c^+(\delta, q(\lambda)) \log\lvert 1 - e((\lambda, Z) + (\delta, z'))\rvert \\
            &- 4 \sum_{\substack{\lambda \in K' \\ q(\lambda) > 0 \\ (\lambda, W) > 0}} \sum_{\substack{\delta \in L_0' / L \\ \pi(\delta) = \lambda + K}} c^-(\delta, -q(\lambda)) \log \lvert 1 - e((\lambda, \widetilde{Z}) + (\delta, z')) \rvert.
        \end{align*}
        Moreover, the results of \cite[Section 3.1, 2.3]{BruinierHabil} yield
        \begin{align*}
            &\frac{\lvert Y \rvert}{\sqrt{2}} \Phi^{K}(Y / \lvert Y \rvert, f) \\
            &= 4 \pi y_2 \sum_{\substack{b_1 \in \IZ / N_z \IZ \\ b_2 \in \IZ / N_{\tilde{z}} \IZ}} c^+\left(\frac{b_1 z}{N_z} + \frac{b_2 \tilde z}{N_{\tilde{z}}}, 0\right) B_2(b_2 / N_{\tilde{z}}). \qedhere
        \end{align*}
    \end{proof}
 
	\begin{thm}[{\cite[Theorem 13.3]{Borcherds}}]\thlabel{thm:BorcherdsProductsWeaklyHolomorphic}
		Let $W$ be a Weyl chamber whose closure contains $z$ and let $f \in M_{0,L}^{!}$. Then there is a modular form $\Psi_f$ of weight $c(0, 0) / 2$ with respect to $\Gamma(L)$ and some multiplier system $\chi$, such that
        $$\log\lvert \Psi_f(Z) \rvert = -\frac{1}{4}\Phi(Z, f) - \frac{c^+(0, 0)}{4} \log(Y^2) + \frac{C_f}{4},$$
        where $C_f$ is a constant depending on $f$. The zeros and poles of $\Psi_f$ lie on rational quadratic divisors $\lambda^\perp$ for $\lambda \in L$ with $q(\lambda) < 0$ and have order
		$$\sum_{x \lambda \in L} c(x \lambda, q(x \lambda)).$$
		Moreover, for each primitive norm $0$ vector $z \in L$, the function $\Psi_f$ has an infinite product expansion in a neighborhood of the cusp $z$ given by 
  \begin{align*}
      \Psi_f(z_1, z_2) 
      &= C e(\varrho(K, W, f_K), z_1 \tilde{z} + z_2 \tilde{z}') \\
      &\times \prod_{\substack{\lambda_1, \lambda_2 \in \IZ_{\geq 0} \\ (\lambda_1, \lambda_2) \neq (0, 0)}} \prod_{\substack{b \in \IZ / N_z \IZ}} (1 - e(\lambda_1 z_2 + \lambda_2 z_1 + b / N_z))^{c(\lambda_1 \tilde{z} / N_{\tilde{z}} + \lambda_2 \tilde{z} / N_{\tilde{z}} + bz / N_z, \lambda_1 \lambda_2 / N_{\tilde{z}})},
  \end{align*}
        where
        $$\varrho(K, W, f_K) = \varrho_{\tilde{z}} \tilde{z} + \varrho_{\tilde{z}'} \tilde{z}'$$
        with $\varrho_{\tilde{z}'}$ being the constant term of $f_{\{0\}} E_2(\tau) / 24$ and $\varrho_{\tilde{z}}$ can be determined as in \cite[Section 10]{Borcherds}.
	\end{thm}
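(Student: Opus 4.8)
The plan is to exponentiate the Fourier expansion of the regularized theta lift established in Theorem~\ref{thm:BorcherdsProductsHejhalPoincareSeries} and to deduce the modular transformation behaviour from the invariance of the lift. Since $\Phi(Z,f)$ depends linearly on $f$, I would first use Lemma~\ref{lem:HarmonicMassFormDecomposition} to write $f = \frakv_f + g$, where $\frakv_f \in \IC[L'/L]^{\SL_2(\IZ)}$ is an invariant vector and $g$ is a finite linear combination of Hejhal Poincaré series of negative index. For $g$ the expansion of $\Phi(Z,g)$ is supplied by Theorem~\ref{thm:BorcherdsProductsHejhalPoincareSeries}, and since $f$ (hence $g$) is weakly holomorphic we have $c^-(\delta,n)=0$, so the last sum over $\lambda$ with $q(\lambda)>0$ drops out. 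The lift of the invariant piece $\frakv_f$ contributes only to the Weyl-vector term and to the additive constant, producing the eta-product factor at the cusp $z$.

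Next I would rewrite the surviving expansion as the real part of a holomorphic function. Using $\log\lvert 1-w\rvert = \Re\log(1-w)$ and recognising the linear term $8\pi(\varrho(K,W,f_K),Y)$ as the real part of $2\pi i(\varrho, z_1\tilde z + z_2\tilde z')$ along the identification \eqref{eq:identification-upper-half-plane}, one obtains
\[
-\tfrac14\Phi(Z,f) - \tfrac{c^+(0,0)}4\log(Y^2) + \tfrac{C_f}4 = \log\lvert\Psi_f(Z)\rvert,
\]
where $\Psi_f$ is the stated product. Reindexing $\lambda = \lambda_1\tilde z/N_{\tilde z} + \lambda_2\tilde z'$ and $\delta$ over $\IZ/N_z\IZ$ turns the Fourier series into the double product over $\lambda_1,\lambda_2\in\IZ_{\geq0}$, and one checks that the exponents are exactly the coefficients $c(\lambda_1\tilde z/N_{\tilde z}+\lambda_2\tilde z/N_{\tilde z}+bz/N_z,\lambda_1\lambda_2/N_{\tilde z})$. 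For $Y$ deep inside the Weyl chamber $W$, i.e. in a neighbourhood of the cusp $z$, the product converges absolutely and defines $\Psi_f$ as a holomorphic, in general meromorphic, function.

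For modularity I would invoke the fundamental invariance of the regularized theta lift: $\Phi(Z,f)$ is invariant under the action of $\Gamma(L)\subseteq\SO^+(V)$ on $\IH\times\IH$ by componentwise Möbius transformations, so $\log\lvert\Psi_f(Z)\rvert + \tfrac{c^+(0,0)}4\log(Y^2)$ is $\Gamma(L)$-invariant. Since $Y^2\mapsto \lvert c_1z_1+d_1\rvert^{-2}\lvert c_2z_2+d_2\rvert^{-2}\,Y^2$ under $(M_1,M_2)\in\Gamma(L)$, it follows that
\[
\lvert\Psi_f(M_1z_1,M_2z_2)\rvert = \lvert(c_1z_1+d_1)(c_2z_2+d_2)\rvert^{c(0,0)/2}\,\lvert\Psi_f(z_1,z_2)\rvert .
\]
As $\Psi_f$ and the automorphy factor are holomorphic where $\Psi_f\neq0$, the ratio $\Psi_f(MZ)\big/\big[(c_1z_1+d_1)(c_2z_2+d_2)\big]^{c(0,0)/2}\Psi_f(Z)$ is a holomorphic function of modulus $1$, hence a constant $\chi(M_1,M_2)$; this defines the multiplier system and exhibits $\Psi_f$ as a meromorphic modular form of weight $c(0,0)/2$. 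Using this transformation law the product, a priori defined only near $z$, extends meromorphically to all of $\IH\times\IH$.

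The remaining points are the divisor and the exact Weyl vector. The order of $\Psi_f$ along a rational quadratic divisor $\lambda^\perp$ with $q(\lambda)<0$ is read off from the wall-crossing of $\Phi(Z,f)$ across $\lambda^\perp$, which contributes $\sum_{x\lambda\in L}c(x\lambda,q(x\lambda))$; this jump in the singular part of the lift is carried out as in \cite[Sections 6, 10]{Borcherds}. Finally $\varrho_{\tilde z'}$ is identified with the constant term of $f_{\{0\}}E_2(\tau)/24$ by comparing the Bernoulli term $B_2(b_2/N_{\tilde z})$ in $\rho(K,W,f_K)$ with the Fourier expansion of $E_2$, while $\varrho_{\tilde z}$ comes from the one-dimensional theta lift as in \cite[Section 10]{Borcherds}. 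I expect the main obstacle to be exactly this last stage together with the global continuation: one must verify that the single local product genuinely glues, via the derived transformation law, to a form that is holomorphic away from the divisors $\lambda^\perp$ with the stated orders, and that the resulting multiplier $\chi$ has finite order.
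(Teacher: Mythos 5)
The paper does not prove this statement at all: it is imported, adapted to signature $(2,2)$, from Borcherds' Theorem 13.3 (the only computation the paper carries out itself is the Fourier expansion of the regularized lift in the preceding theorem, which it attributes to \cite[Sections 2, 3]{BruinierHabil}). Your sketch therefore cannot match ``the paper's proof''; what it does is reconstruct the standard Borcherds--Bruinier argument, and in outline it is sound: split $f$ via Lemma \ref{lem:HarmonicMassFormDecomposition}, note that weak holomorphy kills the $c^-$-terms, exponentiate the Fourier expansion, and get modularity from the $\Gamma(L)$-invariance of $\Phi(Z,f)$ together with the maximum-modulus argument for the multiplier. Two places where you are thinner than the cited sources deserve flagging. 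First, the global object is not obtained by analytically continuing the local product via the transformation law; rather $\Phi(Z,f)$ is already defined on all of $\calH$ as a regularized integral with logarithmic singularities along the $\lambda^\perp$, and $\Psi_f$ is produced by locally writing $\Phi$ plus the singular contributions as $-4\log$ of the modulus of a holomorphic function and patching; the infinite product is only the expansion of this global object in the Weyl chamber $W$. Second, the finite order of the multiplier system $\chi$ is not a routine consequence of the construction --- it was a conjecture of Borcherds, proved later by Bruinier; you correctly identify it as an obstacle but should cite it as a known theorem rather than something to be verified ad hoc. With those caveats your proposal is a faithful, if compressed, account of the proof that the paper delegates to the literature.
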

	
	When the input $f$ of the Borcherds lift is holomorphic, i.e. $f  = \frakv_f \in \IC[L'/L]^{\SL_2(\IZ)}$ is an invariant vector, the divisor of $\Psi_f$ on $Y(L)$ vanishes. Since the boundary has codimension one in our setting, $\Psi_f$ can still have a non-vanishing divisor along the boundary.  
	For $L = L_{N,N'}$ such Borcherds products are explicitly computed in \cite[Section 5]{Bieker}.
    We need the following two results.
	Let
    $$\eta : \IH \to \IC, \qquad \eta(\tau) = e(\tau / 24) \prod_{n = 1}^\infty (1 - e(n \tau))$$
    be the \emph{Dedekind eta function}.
    \begin{prop}[{\cite[Proposition 5.1]{Bieker}}]
        \thlabel{propModUnits-NN'}
        Let $L = L_{N,N'}$ and $\frakv \in \IQ[L'/L]^{\SL_2(\IZ)}$.
        Let $\frakv = v^{H^*_{a,c}}$ be the characteristic function of the self-dual isotropic subgroup $H^*_{a,c}$ and let $\Psi_{a,c}^* = \Psi_\frakv$ be the corresponding Borcherds product. Then $\Psi_{a,c}^{(2)}(z_1, z_2) = \Psi_{a,c}^{(1)}(z_2, z_1)$ and the product expansion of $\Psi^{(1)}_{a,c}(z_1,z_2)$ in the cusp $(\infty, \infty)$ is up to a non-zero constant given by 
            $$\eta \left( \frac{\frac{Nd_1}{N'M} z_1 + u d_2}{N'/d_1} \right) \cdot \eta \left( \frac{N}{N'M} z_2 \right),$$
        where $d_1 = \gcd(a, N')$, $d_2 = \gcd(c/M, N')$, $M = \gcd(N / N', c)$ and $u \in (\IZ/N'\IZ)^\times$ are as in \thref{lem:isotropic-subgroups-cusps}.
    \end{prop}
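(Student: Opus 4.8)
The plan is to feed the holomorphic input $f=\frakv=v^{H^*_{a,c}}$ into the product expansion of \thref{thm:BorcherdsProductsWeaklyHolomorphic} and to exploit that an invariant vector has essentially one family of Fourier coefficients. As a constant function $f(\tau)=v^{H^*_{a,c}}$ has $c(\gamma,n)=0$ unless $n=0$, and $c(\gamma,0)=1$ precisely for $\gamma\in H^*_{a,c}$. Consequently the exponent $c(\,\cdot\,,\lambda_1\lambda_2/N_{\tilde z})$ in the product vanishes unless $\lambda_1\lambda_2=0$; since $(\lambda_1,\lambda_2)\neq(0,0)$, exactly one of $\lambda_1,\lambda_2$ is positive, so the infinite product splits as a product of a factor depending only on $z_1$ and a factor depending only on $z_2$. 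This factorisation is the structural reason why the Borcherds lift of an invariant vector is a product of two eta functions, one in each variable.

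First I would fix the cusp data at $(\infty,\infty)$: take the primitive isotropic vector $z$ representing this $0$-dimensional cusp together with $z'\in L'$ such that $(z,z')=1$, set $K=L\cap z^\perp\cap(z')^\perp$, and choose $\tilde z\in K$, $\tilde z'\in K'$ as in the setup preceding \thref{lem:projection-p-isos}, recording the levels $N_z,N_{\tilde z}$ (which satisfy $N_zN_{\tilde z}=NN'$ by \thref{lem:projection-p-isos}). With these fixed, the two surviving families are indexed by $(\lambda,b)$ with $\lambda\in\IZ_{>0}$ and $b\in\IZ/N_z\IZ$, and the corresponding exponent equals $1$ exactly when $\lambda\tilde z/N_{\tilde z}+bz/N_z\in H^*_{a,c}$ (respectively with $\tilde z'$ in the other family). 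The heart of the computation is to solve this membership condition using the explicit generators of $H^1_{a,c}$ from \thref{lem:isotropic-subgroups-cusps}, namely $(\tfrac{Na}{N'M},0,0,-\tfrac cM)$ and $(0,c,a,0)$. This cuts out an arithmetic progression of admissible $\lambda$ (producing the scalings $\tfrac{Nd_1}{N'M}$ and $\tfrac{N}{N'M}$ with $d_1=\gcd(a,N')$, $d_2=\gcd(c/M,N')$, $M=\gcd(N/N',c)$) and, for each such $\lambda$, a unique residue $b$ depending linearly on $\lambda$; the class $e(b/N_z)$ of that residue is exactly the $\lambda$-linear root of unity that reassembles into the constant shift $ud_2$ inside the eta argument.

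Having solved the membership condition, I would reindex each surviving sub-product into the shape $\prod_{n\geq1}(1-e(n\tau'))$. One family carries no root-of-unity shift and gives the non-constant part of $\eta\bigl(\tfrac{N}{N'M}z_2\bigr)$; the other carries the $\lambda$-linear shift and gives the non-constant part of $\eta\bigl(\tfrac{\frac{Nd_1}{N'M}z_1+ud_2}{N'/d_1}\bigr)$, where $u\in(\IZ/N'\IZ)^\times$ is the unit relating $(\tfrac{Na}{N'M},0,0,-\tfrac cM)$ to $(\tfrac{N}{N'M}d_1,0,0,-ud_2)$ as in \thref{lem:isotropic-subgroups-cusps}. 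It then remains to recover the two prefactors $e(\tau'/24)$, which come from the Weyl vector $\varrho(K,W,f_K)$ of \thref{thm:BorcherdsProductsWeaklyHolomorphic}: evaluating the $B_2$-sum $\tfrac12\sum_{b_1,b_2}c^+(\tfrac{b_1z}{N_z}+\tfrac{b_2\tilde z}{N_{\tilde z}},0)\,B_2(b_2/N_{\tilde z})\,\tilde z$ over the elements of $H^*_{a,c}$, and using $B_2(x)=x^2-x+\tfrac16$ together with the linear Weyl term $e((\varrho,z_1\tilde z+z_2\tilde z'))$, reproduces exactly the slopes $\tfrac1{24}$ of the two eta factors, up to the undetermined non-zero constant $C$.

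Finally, the symmetry $\Psi^{(2)}_{a,c}(z_1,z_2)=\Psi^{(1)}_{a,c}(z_2,z_1)$ follows by repeating the computation verbatim for $*=2$: by \thref{lem:isotropic-subgroups-cusps} the generators of $H^2_{a,c}$ are the image of those of $H^1_{a,c}$ under the involution $(w,x,y,z)\mapsto(w,x,-z,-y)$ of the discriminant form $(\IZ/N\IZ)^2\oplus(\IZ/N'\IZ)^2$, which preserves $q=\tfrac{wx}{N}+\tfrac{yz}{N'}$ and interchanges the roles of the two factors of $\IH\times\IH$. Hence the two surviving sub-products trade places and the resulting eta product is that of $\Psi^{(1)}_{a,c}$ with $z_1$ and $z_2$ exchanged, up to a non-zero constant; conceptually this reflects the lattice isometry $X\mapsto JX^{\mathsf T}J$ with $J=\begin{psmallmatrix}0&1\\1&0\end{psmallmatrix}$, which carries $I^1_{a,c}$ to $I^2_{a,c}$ and induces the coordinate exchange on $\calH$ (up to the conjugation relating the two components of $\calK$), so that equivariance of the regularized theta lift yields the identity. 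I expect the main obstacle to be the arithmetic bookkeeping of the second and third steps: reading off the admissible progression and the $\lambda$-linear residue from the generators of \thref{lem:isotropic-subgroups-cusps} so that the scaling $\tfrac{Nd_1}{N'M}$, the denominator $N'/d_1$, and the shift $ud_2$ all emerge with the correct normalisation, and pinning down the $\tfrac1{24}$ exponents through the Bernoulli-polynomial Weyl-vector sum.
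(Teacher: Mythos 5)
The paper does not actually prove this proposition: it is imported verbatim from \cite[Proposition 5.1]{Bieker}, and no proof environment follows the statement. So there is no in-paper argument to compare against, and your proposal should be judged as a reconstruction of the proof in the cited reference. As such, it is the standard and correct derivation: for the constant input $f=v^{H^*_{a,c}}$ one has $c(\gamma,n)=0$ unless $n=0$, so in the product expansion of \thref{thm:BorcherdsProductsWeaklyHolomorphic} only the terms with $\lambda_1\lambda_2=0$ survive, the product splits into a $z_1$-part and a $z_2$-part, the surviving exponents are governed by the membership condition $\lambda\tilde z/N_{\tilde z}+bz/N_z\in H^*_{a,c}$ (resp.\ with $\tilde z'$), which one solves against the explicit generators from \thref{lem:isotropic-subgroups-cusps}, and the two $e(\tau/24)$ prefactors come from the Weyl vector via the distribution relation for $B_2$. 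This is exactly the route one expects Bieker's proof to take, and your identification of where the constants $d_1$, $d_2$, $M$, $u$ enter is consistent with the statement. The one step you rightly flag as the real work --- the arithmetic bookkeeping turning the membership condition into the progression with scaling $\tfrac{Nd_1}{N'M}$, denominator $N'/d_1$ and shift $ud_2$ --- is only sketched, but the plan for it is sound.

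One genuine wrinkle in your symmetry argument: the isometry $X\mapsto JX^{\mathsf T}J$ does induce the involution $(w,x,y,z)\mapsto(w,x,-z,-y)$ on $L'/L$ and carries $I^1_{a,c}$ to $I^2_{a,c}$, but on $\calK$ it sends $Z_L(z_1,z_2)$ to the point $\begin{psmallmatrix} -z_2 & -z_1z_2 \\ 1/N' & z_1\end{psmallmatrix}$, i.e.\ formally to $Z_L(-z_2,-z_1)$, which lies in the \emph{other} connected component of $\calK$; composing with complex conjugation then yields the antiholomorphic map $(z_1,z_2)\mapsto(-\bar z_2,-\bar z_1)$ rather than the holomorphic swap. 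So the ``conceptual'' equivariance argument needs an additional correction (e.g.\ a further sign-adjusted isometry) before it produces $\Psi^{(2)}_{a,c}(z_1,z_2)=\Psi^{(1)}_{a,c}(z_2,z_1)$. Your primary argument --- rerunning the explicit computation for the generators of $H^2_{a,c}$ and observing that the two surviving sub-products trade places --- does not suffer from this and is the safe way to conclude.
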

    In the two cases $N' = 1$ and $N'=N$ the formulas simplify as follows.
	\begin{cor}
		\thlabel{propModUnits}
		\begin{enumerate}
			\item 
            \label{propModUnits-N1}
            Let $N'= 1$. Then the product expansion of $\Psi^{(1)}_{a,c}= \Psi^{(2)}_{a,c}$ in the cusp $(\infty, \infty)$ is up to a non-zero constant given by
			$$ \eta(dz_1) \eta(d z_2), $$ 
			where $d = \frac{N}{N'M}$ and $M = \gcd(N / N', c)$.
			\item 
            \label{propModUnits-NN}
            Let $N = N'$. Then the product expansion of $\Psi^{(1)}_{a,c}$ in the cusp $(\infty, \infty)$ is up to a non-zero constant given by 
            $$\eta \left( \frac{d_1 z_1 + u d_2}{N/d_1} \right) \cdot \eta \left( z_2 \right).$$
		\end{enumerate}
	\end{cor}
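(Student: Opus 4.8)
The plan is to derive both parts as immediate specializations of the general product expansion of \thref{propModUnits-NN'}, which is the only substantive input; the corollary simply records the two extreme cases $N' = 1$ and $N = N'$.

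For part (1) I would set $N' = 1$ and evaluate the arithmetic quantities entering \thref{propModUnits-NN'}. Then $d_1 = \gcd(a, N') = 1$ and $d_2 = \gcd(c/M, N') = 1$, the divisor is $M = \gcd(N/N', c) = \gcd(N, c)$, and $u$ lies in the trivial group $(\IZ/N'\IZ)^\times$. Substituting these values, the first eta factor becomes $\eta\bigl(\frac{N}{M} z_1 + u\bigr)$ and the second $\eta\bigl(\frac{N}{M} z_2\bigr)$. Writing $d = N/M = \frac{N}{N'M}$, this is already $\eta(d z_1)\,\eta(d z_2)$ apart from the integer translation $u$ in the first argument, which I would absorb using the quasi-invariance $\eta(\tau + 1) = e(1/24)\,\eta(\tau)$: the shift contributes only a nonzero constant, matching the phrase ``up to a non-zero constant'' in the statement. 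For the equality $\Psi^{(1)}_{a,c} = \Psi^{(2)}_{a,c}$ I would invoke the observation made in the proof of \thref{lem:space-types} that $H^1_{a,c} = H^2_{a,c}$ whenever $N' = 1$, together with the fact that the Borcherds product $\Psi_{\frakv}$ depends only on the invariant vector $\frakv = v^{H^*_{a,c}}$.

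For part (2) I would set $N = N'$, so that $M = \gcd(N/N', c) = \gcd(1, c) = 1$ and hence $d_1 = \gcd(a, N)$ and $d_2 = \gcd(c, N)$. Substituting into \thref{propModUnits-NN'}, the coefficient $\frac{N d_1}{N'M}$ collapses to $d_1$ while $\frac{N}{N'M}$ collapses to $1$, so the general expansion reduces directly to $\eta\bigl(\frac{d_1 z_1 + u d_2}{N/d_1}\bigr) \cdot \eta(z_2)$, which is exactly the claimed formula, with no further manipulation needed.

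The only genuinely nonroutine point is the treatment of the integer shift by $u$ in part (1); everything else is substitution into \thref{propModUnits-NN'}. I therefore anticipate no real obstacle beyond confirming that the quasi-invariance of $\eta$ under $\tau \mapsto \tau + 1$ cleanly absorbs that translation into the ambient nonzero constant.
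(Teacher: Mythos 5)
Your proposal is correct and matches the paper, which states the corollary without proof as an immediate specialization of \thref{propModUnits-NN'}; your substitutions ($d_1 = d_2 = 1$, $u$ trivial for $N'=1$; $M=1$ for $N=N'$) are exactly right, and the two points you add --- absorbing the integer shift $u$ via $\eta(\tau+1) = e(1/24)\eta(\tau)$ and deducing $\Psi^{(1)}_{a,c} = \Psi^{(2)}_{a,c}$ from $H^1_{a,c} = H^2_{a,c}$ when $N'=1$ --- are the correct justifications for the details the paper leaves implicit.
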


    \begin{rem}\thlabel{rem:EtaProductIdentity}
        The linear relation we found in \thref{lem:space-types-dimension} lifts to the identity of $\eta$-quotients
        $$ \prod_{u \in (\IZ/p^r\IZ)^\times} \eta\left( \frac{z + u}{p^r} \right) = \exp \left( \frac{p^{r} - p^{r-1}}{48} \right) \frac{\eta(z)^{p^r-p^{r-1}+2}}{\eta(z/p^r) \eta(p^rz)}$$
        for every prime $p$ and every $r \geq 1$ generalizing the result \cite[Corollary 5.6]{Bieker} to arbitrary $r$.
    \end{rem}

	\section{Boundary divisors of Borcherds products}

	We determine the boundary divisors of Borcherds products.
    In order to compute the multiplicity of a one-dimensional boundary component in the divisor of a Borcherds product we use the notion of types of one-dimensional boundary components introduced in Section \ref{sec:OneDimCusps}.
    As before, let $L$ be an even lattice of signature $(2,2)$ and Witt rank 2.
    \begin{defn}
    \begin{enumerate}
        \item 
        Let $H \subset L'/L$ be a self-dual isotropic subgroup.
        The \emph{special boundary divisor associated to $H$} is defined to be the boundary divisor
        $$ Z(H) = \sum_{S} \#(H \cap \mathrm{type}_L(S)) [S]$$
        in $X(L)$, where the sum runs over all one-dimensional cusps of $X(L)$.
        \item 
        A divisor on $X(L)$ is called \emph{special} if its restriction to $Y(L)$ is a linear combination of Heegner divisors and if its boundary part is a linear combination of special boundary divisors $Z(H)$ for self-dual isotropic subgroups $H \in \mathrm{Types}(L)$ of $L'/L$.
        The space of special boundary divisors is denoted by $\spbdiv(X(L))_\IQ$.
    \end{enumerate}
    \end{defn}
	We use the following criterion to control divisors of Borcherds products.
	\begin{prop}\thlabel{lem:boundary-mult-weyl-vect}
		Let $f \in H_{0, L}$ such that there exists a meromorphic modular form $F \colon \IH\times \IH \to \IC$ with $- 4\log|F| = \Phi(Z,f)$ (e.g. for $f \in M^!_{0,L}$).
        Let $$f(\tau) = \frakv_f + \sum_{\gamma \in L' / L} \sum_{n < 0} c^+(\gamma, n) F_{\beta, m}(\tau, 1)$$ be the decomposition of $f$ into an invariant vector $\frakv_f$ and a linear combination of Hejhal-Poincar\'e series as in Lemma \ref{lem:HarmonicMassFormDecomposition}. Then the following hold:
        \begin{enumerate}
            \item 
            The divisor of $F$ on $Y(L)$ depends only on $f - \frakv_f$ and is a sum of Heegner divisors.
            \item 
            Let $I$ be a rational isotropic plane in $V = L\otimes_\IZ \IQ$.
		      The order of $F$ along the one-dimensional cusp of $X(L)$ corresponding to $I$ is given by $\varrho_{\tilde{z}'} = \downarrow_{\type_L(I)}^L \frakv_f$.
            In particular,it only depends on $\frakv_f$ and $\type_L(I)$.
            \item  When $f = \frakv^H$ is the characteristic function of a self-dual isotropic subgroup $H \subset L'/L$ the boundary divisor of $F$ is given by $Z(H)$.
        \end{enumerate}
	\end{prop}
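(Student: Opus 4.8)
The plan is to read off each of the three assertions from the explicit Fourier expansion of the regularized theta lift recorded in \thref{thm:BorcherdsProductsHejhalPoincareSeries} together with the product expansion in \thref{thm:BorcherdsProductsWeaklyHolomorphic}. First I would use Lemma \ref{lem:HarmonicMassFormDecomposition} to write $f = \frakv_f + g$, where $g = \sum_{\gamma} \sum_{n<0} c^+(\gamma,n) F_{\beta,m}(\tau,1)$ is a linear combination of Hejhal--Poincaré series, and exploit that $\Phi(Z,f) = \Phi(Z,\frakv_f) + \Phi(Z,g)$ by linearity of the regularized theta lift. The point is that the two summands contribute to genuinely different parts of the divisor: the Poincaré part $g$ governs the Heegner divisors in the interior, while the invariant vector $\frakv_f$ governs the boundary behaviour through the Weyl vector.

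For assertion (1), I would observe that by \thref{thm:BorcherdsProductsWeaklyHolomorphic} the zeros and poles of $\Psi_f$ on $Y(L)$ lie on rational quadratic divisors $\lambda^\perp$ with orders given by the coefficients $c(x\lambda, q(x\lambda))$. Since $\frakv_f$ is an invariant vector, i.e.\ $\frakv_f \in \IC[L'/L]^{\SL_2(\IZ)}$ is holomorphic at $\infty$ with only a constant term, it contributes no principal-part coefficients $c^+(\gamma,n)$ with $n<0$, hence no Heegner divisors on the interior $Y(L)$. Thus the interior divisor is determined entirely by the principal part of $g = f - \frakv_f$, which is precisely the claim; the shape of these divisors as sums of Heegner divisors is exactly Borcherds' statement.

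For assertion (2), the key input is the product expansion near the cusp $z$ in \thref{thm:BorcherdsProductsWeaklyHolomorphic}: the leading exponential factor $Ce((\varrho(K,W,f_K), z_1\tilde z + z_2 \tilde z'))$ with $\varrho(K,W,f_K) = \varrho_{\tilde z}\tilde z + \varrho_{\tilde z'}\tilde z'$ encodes the vanishing order along the one-dimensional cusp. I would argue that the order of $F$ along the cusp corresponding to $I = \langle z,\tilde z\rangle$ is read off from the $z_1\tilde z$-coefficient, namely $\varrho_{\tilde z'}$, the constant term of $f_{\{0\}}E_2(\tau)/24$. The lemma preceding \thref{thm:BorcherdsProductsHejhalPoincareSeries} identifies $f_{\{0\}} = \downarrow_{\type_L(I)}^L f$, and since isotropic descent sends invariant vectors to invariant vectors while the Poincaré part $g$ has no constant term contributing here, this reduces to $\downarrow_{\type_L(I)}^L \frakv_f$. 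The main subtlety—and I expect this to be the principal obstacle—is checking carefully that only $\frakv_f$ and not the Poincaré part $g$ enters the constant term $\varrho_{\tilde z'}$, which requires that the negative-index coefficients $c^+(\gamma,n)$ with $n<0$ attached to $z$ and $\tilde z$ do not contribute to the relevant Weyl-vector component; this follows because the Weyl vector in \thref{thm:BorcherdsProductsHejhalPoincareSeries} involves only the $n=0$ coefficients $c^+(\tfrac{b_1 z}{N_z} + \tfrac{b_2\tilde z}{N_{\tilde z}},0)$.

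Finally, for assertion (3) I would specialise (2) to $f = \frakv^H = v^H$, the characteristic function of a self-dual isotropic subgroup $H$. Here $\frakv_f = v^H$ itself, so the order along the cusp corresponding to $I$ is $\downarrow_{\type_L(I)}^L v^H$ evaluated at its constant term. Using the explicit definition of isotropic descent $\downarrow_H^L$, the descent of $v^H$ to $H^\perp/H = \type_L(I)^\perp/\type_L(I)$ has constant-coefficient value equal to $\#(H \cap \type_L(I))$, since descent collapses those basis vectors $\frake_\gamma$ with $\gamma \in \type_L(I)$ that also lie in $H$. Summing over all one-dimensional cusps $S$ with $\type_L(S)$ ranging over $\Types(L)$ then yields exactly $\sum_S \#(H \cap \type_L(S))[S] = Z(H)$, matching the definition of the special boundary divisor. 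The only care needed is to confirm that the normalisation of the Weyl vector produces the integer $\#(H\cap\type_L(I))$ rather than a scalar multiple, which I would verify directly from the formula for $\rho(K,W,f_K)$ using $B_2(0) = \tfrac16$ and the structure of $H\cap\type_L(I)$ as a subgroup.
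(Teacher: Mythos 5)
Your proposal is correct and follows essentially the same route as the paper, whose proof consists of little more than pointers to the same two expansions (\thref{thm:BorcherdsProductsHejhalPoincareSeries}, \thref{thm:BorcherdsProductsWeaklyHolomorphic}) for (1) and (2) and the one-line computation $\varrho_{\tilde z'} = \downarrow^L_{\type_L(I)}\frakv^H = \#(H\cap\type_L(I))$ for (3); you have simply written out the details. One small sharpening for the subtlety you flag in (2): the cleanest reason the Poincar\'e part $g$ does not contribute is not merely that the formula in \thref{thm:BorcherdsProductsHejhalPoincareSeries} involves only $c^+(\cdot,0)$, but that the Weyl vector $\rho(K,W,g_K)$ there is visibly a multiple of $\tilde z$, so its $\tilde z'$-component — which is exactly what measures the order along the boundary component — vanishes, while for the invariant vector $\frakv_f$ the descent $\downarrow^L_{\type_L(I)}\frakv_f$ is constant, so the constant term of $f_{\{0\}}E_2(\tau)/24$ reduces (up to the normalization the paper also suppresses) to that descent.
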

	\begin{proof}
 \begin{enumerate}
    \item This is \cite[Theorem 3.16]{BruinierHabil} and \cite[Theorem 13.3]{Borcherds}, compare \thref{thm:BorcherdsProductsWeaklyHolomorphic}.
     \item 
		This can be read off from the product expansions of $F$ as in Theorem \ref{thm:BorcherdsProductsHejhalPoincareSeries} and Theorem \ref{thm:BorcherdsProductsWeaklyHolomorphic}, compare \cite[Corollary 2.3, Remark 2.2]{KudlaProduct}.
    \item 
    By the definition of Weyl vectors in this case we find $\varrho_{\tilde z'} = \downarrow^L_{\type_L(I)} \frakv^H = \#(H \cap \type_L(I))$.\qedhere
 \end{enumerate}
	\end{proof}

    In particular, it suffices to study the boundary divisors of lifts of invariant vectors.
    As the multiplicity of the boundary divisor of a Borcherds product along a one-dimensional cusp only depends on the type of the cusp, we may identify $\spbdiv(X(L))_\IQ$ with a subspace of $\IQ^{\mathrm{Types}(L)}$.
    The map that associates to an invariant vector the boundary divisor of its Borcherds product
    $$ \di_\partial \circ \Psi(Z, -) \colon \ \IQ[L'/L]^{\SL_2(\IZ)} \to \spbdiv(X(L))_\IQ $$ is a linear map of $\IQ$-vector spaces. The surjectivity follows from \thref{lem:boundary-mult-weyl-vect}. In particular, $\spbdiv(X(L))_\IQ$ is exactly the space of boundary divisors of Borcherds products.

    In general the map will fail to be an isomorphism as the number of types of one-dimensional cusps of $L$ is strictly smaller than the dimension of the space of invariants, compare the discussion following \thref{lem:space-types} above.
    However, we show that the restriction of $\di_\partial \circ \Psi(Z, -)$ to $\IQ[L'/L]_{\mathrm{Types}(L)}$ becomes an isomorphism.
    This allows us to describe the space $\spbdiv(X(L))_\IQ$ more explicitly in this case.

    Taking the product (respectively the sum) of isotropic descent  (respectively isotropic induction) for all types of one-dimensional cusps of $L$ we obtain a pair of adjoint operators
    $$ \prod_{H \in \mathrm{Types}(L)} \left(\downarrow^{L' / L}_H - \right) \colon \IC[L' / L]^{\SL_2(\IZ)} \to  \bigoplus_{H \in \mathrm{Types}(L)} \IC[H^{\perp}/H]^{\SL_2(\IZ)} $$ and 
    $$ \bigoplus_{H \in \mathrm{Types}(L)} \left( \uparrow^{L' / L}_H - \right)  \colon \bigoplus_{H \in \mathrm{Types}(L)} \IC[H^{\perp}/H]^{\SL_2(\IZ)} \to \IC[L' / L]^{\SL_2(\IZ)}.$$
    Note that by definition $\IC[L'/L]_{\mathrm{Types}(L)}$ is the image of $\bigoplus_{H \in \mathrm{Types}(L)} \left( \uparrow^{L' / L}_H - \right)$. 
    
    \begin{prop}
        \thlabel{prop:boundary-isotropic-descent}
        The map
        $$\di_\partial \circ \Psi(Z, -) \colon \IQ[L'/L]^{\SL_2(\IZ)} \to \spbdiv(X(L))_\IQ \hookrightarrow \IQ^{\mathrm{Types}(L)}$$
        agrees with the map that associates to an invariant vector the zero-component of its isotropic descent
        $$ \prod_{H \in \mathrm{Types}(L)} (\downarrow^{L' / L}_H - )_0 \colon \IQ[L'/L]^{\SL_2(\IZ)} \to \IQ^{\mathrm{Types}(L)}.$$
        Moreover, the map restricts to an isomorphism
        $$ \IQ[L'/L]_{\mathrm{Types}(L)} \xrightarrow{\cong} \spbdiv(X(L))_\IQ.$$
    \end{prop}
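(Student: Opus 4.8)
The plan is to prove the two assertions in order, deriving the isomorphism as a consequence of the first equality of maps together with the structural results of Section \ref{sec:OneDimCusps}. First I would establish that the map $\di_\partial \circ \Psi(Z,-)$ coincides with the zero-component of isotropic descent. By part (2) of \thref{lem:boundary-mult-weyl-vect}, the order of the Borcherds product $\Psi_f$ along the one-dimensional cusp corresponding to a rational isotropic plane $I$ equals $\varrho_{\tilde z'} = \downarrow_{\type_L(I)}^L \frakv_f$, read off as the zero-component of the isotropic descent to $\type_L(I)^\perp / \type_L(I)$. Since the types of one-dimensional cusps are exactly the elements of $\mathrm{Types}(L)$ by the discussion in Section \ref{sec:OneDimCusps}, indexing the boundary divisor by $\mathrm{Types}(L)$ gives precisely the map $\prod_{H} (\downarrow^{L'/L}_H -)_0$. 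This identification is essentially a bookkeeping step once \thref{lem:boundary-mult-weyl-vect} is in hand; the content is that the Weyl-vector computation really produces the zero Fourier coefficient of the descent.

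For the isomorphism, I would argue surjectivity and injectivity separately. Surjectivity is immediate: by definition $\IQ[L'/L]_{\mathrm{Types}(L)}$ is generated by the characteristic functions $v^H$ for $H \in \mathrm{Types}(L)$, and part (3) of \thref{lem:boundary-mult-weyl-vect} sends $v^H$ to the special boundary divisor $Z(H)$, so the image is exactly $\spbdiv(X(L))_\IQ$. The crux is injectivity, equivalently the equality of dimensions $\dim \IQ[L'/L]_{\mathrm{Types}(L)} = \dim \spbdiv(X(L))_\IQ$. Here I would compare the two sides against the combinatorics already worked out: on the source side, \thref{lem:space-types-dimension} gives the dimension of $\IQ[L'/L]_{\mathrm{Types}(L)}$ via the single relation \eqref{lem:space-types-relation-eq} of \thref{lem:space-types-relation}; on the target side, I must show that the only linear relation among the divisors $Z(H)$ is the image of that same relation. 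Since $Z(H)$ records the numbers $\#(H \cap \mathrm{type}_L(S))$ as $S$ ranges over cusps, and $\mathrm{type}_L(S)$ ranges over $\mathrm{Types}(L)$, the relation \eqref{lem:space-types-relation-eq} manifestly maps to a relation among the $Z(H)$; the point is that it is the \emph{only} one.

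The main obstacle will be this injectivity/no-further-relations step. I expect to handle it by reducing to prime-power level via \thref{lem:types-factorisation}, which splits both $\IQ[L'/L]_{\mathrm{Types}(L)}$ and the combinatorics of $\mathrm{Types}(L)$ into a tensor product over $p \mid N$, so it suffices to treat $N = p^r$. In that case I would invoke the explicit list of types from \thref{lem:space-types} and the relation-detection method from the proof of \thref{lem:space-types-relation}: for each pair $H^1_{*,*}, H^2_{*,*}$ of generators appearing on opposite sides of \eqref{lem:space-types-relation-eq} one exhibits an element of $L'/L$ lying in precisely those two subgroups, which shows that after collapsing by the single relation the remaining characteristic functions are linearly independent. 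Transporting this independence through the map $\prod_H (\downarrow^{L'/L}_H -)_0$ — using that the pairing $\langle v^H, (\downarrow^{L'/L}_{H'} -)_0\rangle = \#(H \cap H')$ is essentially the incidence matrix of types against types — forces any kernel element to satisfy the same relation, giving injectivity on $\IQ[L'/L]_{\mathrm{Types}(L)}$. Combining the matching dimensions from \thref{lem:space-types-dimension} with surjectivity then yields the desired isomorphism.
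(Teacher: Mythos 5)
Your proposal is correct in substance but routes the key step differently from the paper. For the first assertion you do exactly what the paper does: read off the boundary multiplicities from \thref{lem:boundary-mult-weyl-vect} and match them with the zero-component of the descent. For the isomorphism, however, the paper gives a two-line abstract argument: the map $\prod_H (\downarrow^{L'/L}_H-)_0$ is (up to the identification $(\downarrow_H \frakv)_0 = \langle \frakv, v^H\rangle$) the adjoint of the isotropic induction $\bigoplus_H \uparrow^{L'/L}_H$, whose image is by definition $\IQ[L'/L]_{\mathrm{Types}(L)}$; a linear map restricted to the image of its adjoint is automatically injective with image equal to the image of the map, which is $\spbdiv(X(L))_\IQ$. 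Your proposal contains the germ of this — the observation that the matrix of the restricted map is the Gram matrix $(\#(H\cap H'))_{H,H'}$ of the vectors $v^H$ — and that observation alone already closes the argument, since the rank of a Gram matrix equals the dimension of the span of the vectors, so relations among the $Z(H)$ are exactly relations among the $v^H$. The detour through \thref{lem:types-factorisation}, the explicit lists of \thref{lem:space-types}, and the relation \eqref{lem:space-types-relation-eq} is therefore unnecessary, and it also costs you generality: the proposition is stated for an arbitrary even lattice $L$ of signature $(2,2)$ and Witt rank $2$, while the explicit combinatorics you invoke are only available for $L = L_{N,N'}$ (the paper only uses \thref{lem:space-types-dimension} afterwards, in \thref{thm:SpecialBoundaryDivisorsAsBorcherdsProducts}, to evaluate the dimension in that special case). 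If you streamline your last paragraph to the adjointness/Gram-matrix statement and drop the prime-power reduction, you recover the paper's proof.
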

    \begin{proof}
        The first part follows from the explicit descriptions of the two maps in \thref{lem:boundary-mult-weyl-vect} and the definition of the isotropic descent. 
        The second statement follows from the fact that isotropic descent and induction are adjoint together with the fact that $\IQ[L'/L]_{\mathrm{Types}(L)}$ and $\spbdiv(X(L))_\IQ$ are the images of isotropic induction and isotropic descent, respectively.
    \end{proof}

    The following theorem is a reformulation of the previous proposition together with \thref{lem:boundary-mult-weyl-vect} and \thref{lem:space-types-dimension}.
    \begin{thm}\thlabel{thm:SpecialBoundaryDivisorsAsBorcherdsProducts}
        Let $f \in M_{0, L}^!$. Then the boundary divisor of $\Psi_f(Z)$ on $X(L)$ is special and every special boundary divisor is the boundary divisor of $\Psi_\frakv(Z)$ for some $\frakv \in \IC[L' / L]_{\Types(L)}$.
        For $L = L_{N,N'}$ the space of special boundary divisors has dimension 
        $$\dim \spbdiv(X(L))_\IQ = \prod_{p \mid N'} \left(2((r_p-r_p'+1)p^{r_p'} - (r_p-r_p'-1) p^{r_p' - 1}) - 1\right)  \cdot \prod_{p \nmid N'}  ({r_p}+1).$$
    \end{thm}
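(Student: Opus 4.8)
The plan is to deduce all three assertions directly from the structural results already in place, so the proof is an assembly rather than a fresh computation. The first step is to reduce an arbitrary weakly holomorphic input $f \in M_{0,L}^!$ to its invariant part. Writing $f = \frakv_f + \sum_{\gamma, n} c^+(\gamma, n) F_{\beta, m}(\tau, 1)$ as in Lemma \ref{lem:HarmonicMassFormDecomposition}, I would invoke \thref{lem:boundary-mult-weyl-vect}: part (1) shows that the divisor of $\Psi_f$ on $Y(L)$ is a sum of Heegner divisors, while part (2) shows that the order of $\Psi_f$ along the one-dimensional cusp attached to an isotropic plane $I$ equals the zero-component of $\downarrow^L_{\type_L(I)} \frakv_f$, hence depends only on $\frakv_f$ and on $\type_L(I)$. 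Consequently the boundary divisor of $\Psi_f$ coincides with that of the Borcherds product of the invariant vector $\frakv_f$, and the Poincaré part contributes nothing along the boundary.

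Next I would identify this boundary divisor inside $\spbdiv(X(L))_\IQ$ and establish the converse. By \thref{prop:boundary-isotropic-descent} the map $\di_\partial \circ \Psi(Z, -)$ on invariants agrees with the zero-component of isotropic descent $\prod_{H \in \mathrm{Types}(L)} (\downarrow^{L'/L}_H -)_0$, whose image lies in $\spbdiv(X(L))_\IQ$ by construction; combined with the reduction above this yields the first assertion that the boundary divisor of $\Psi_f$ is special. For the converse, the same proposition supplies the isomorphism $\IQ[L'/L]_{\mathrm{Types}(L)} \xrightarrow{\cong} \spbdiv(X(L))_\IQ$, and its surjectivity says precisely that every special boundary divisor arises as the boundary divisor of $\Psi_\frakv$ for some $\frakv \in \IC[L'/L]_{\mathrm{Types}(L)}$, which is the second assertion. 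Finally, transporting the dimension count through this isomorphism gives $\dim \spbdiv(X(L))_\IQ = \dim \IQ[L'/L]_{\mathrm{Types}(L)}$, and for $L = L_{N,N'}$ the right-hand side is exactly formula \eqref{lem:space-types-dimension-eq} of \thref{lem:space-types-dimension}.

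Since the substantive work is carried out earlier---in \thref{prop:boundary-isotropic-descent}, where the adjointness between isotropic descent and induction forces the restricted map to be an isomorphism, and in \thref{lem:space-types} together with \thref{lem:space-types-dimension}, where the types are classified and counted---the only genuine point to watch in the present argument is the reduction in the first step: one must confirm that for weakly holomorphic $f$ the entire boundary behaviour factors through $\frakv_f$ and that the condition on $Y(L)$ is met, so that the resulting divisor satisfies the full definition of \emph{special}. As this is exactly what \thref{lem:boundary-mult-weyl-vect} provides, I do not anticipate a serious obstacle beyond carefully matching the normalization of the Weyl vector $\varrho_{\tilde z'}$ with the zero-component of isotropic descent $\downarrow^L_{\type_L(I)} \frakv_f$.
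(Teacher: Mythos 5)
Your proposal is correct and follows essentially the same route as the paper, which states the theorem as a reformulation of \thref{prop:boundary-isotropic-descent} together with \thref{lem:boundary-mult-weyl-vect} and \thref{lem:space-types-dimension}. The reduction of $f$ to its invariant part $\frakv_f$ and the use of the descent/induction adjointness for surjectivity are precisely the ingredients the paper relies on.
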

    
    \begin{lem}
    \thlabel{lem:trivial-divisor-constant}
        Let $F$ be a modular form for $\Gamma(L)$ of some weight $k$ and some character (of finite order) such that the divisor of $F$ vanishes on $X(L)$. Then $F$ is constant.
    \end{lem}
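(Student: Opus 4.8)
The plan is to reduce to a nowhere-vanishing holomorphic \emph{function} on the compact variety $X(L)$ in two stages: first show that the weight $k$ must be zero, then invoke compactness.

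First I would reduce to the case of trivial character. If $\chi$ has order $n$, then $F^n$ is a modular form for $\Gamma(L)$ of weight $nk$ with trivial character, its divisor on $X(L)$ still vanishes, and $F$ is constant as soon as $F^n$ is, since $\IH\times\IH$ is connected. So assume $\chi$ is trivial. The hypothesis $\operatorname{div}(F)=0$ means exactly that $F$ is holomorphic and nowhere vanishing on $Y(L)$ and has order $0$ along every one-dimensional boundary component, i.e. $F$ is a nowhere-vanishing global section of the automorphic line bundle $\mathcal L_k$ of weight $k$ on $X(L)$.

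Second, and this is the crux, I would argue that $k=0$. Recall that the Baily--Borel compactification is $X(L)=\operatorname{Proj}\bigoplus_{m\ge 0}M_{m\kappa}(\Gamma(L))$ for a suitable positive base weight $\kappa$, and that under this description the automorphic line bundle $\mathcal L_\kappa=\mathcal O_{X(L)}(1)$ is ample; hence the class $A$ of $\mathcal L_1$ is a nonzero ample $\IQ$-class in $N^1(X(L))_\IQ$. A nowhere-vanishing section of $\mathcal L_k$ trivializes $\mathcal L_k$, so $[\operatorname{div}(F)]=k\,A=0$ in $N^1(X(L))_\IQ$. Intersecting with the class of any complete curve $C\subset X(L)$, for instance the closure of a one-dimensional boundary component (which is complete since $X(L)$ is projective), gives $k\,(A\cdot C)=0$ with $A\cdot C>0$ by ampleness, and therefore $k=0$. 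Alternatively, one may restrict $F$ to a one-dimensional boundary component $\IH\times\{\infty\}$: its constant Fourier--Jacobi coefficient $a_0(z_1)=\lim_{z_2\to i\infty}F(z_1,z_2)$ is a weight-$k$ modular form for the induced subgroup of $\SL_2(\IZ)$ which is nowhere vanishing on $\IH$ by Hurwitz's theorem, and the valence formula then forces $k=0$; this route has the drawback that one must separately rule out zeros of $a_0$ at the cusps of the boundary curve, which are codimension-two points of $X(L)$ not seen by $\operatorname{div}(F)$.

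Finally, with $k=0$ and trivial character, $F$ is a $\Gamma(L)$-invariant holomorphic function on $Y(L)$. Since $\operatorname{div}(F)=0$, it is holomorphic and nonvanishing across the codimension-one boundary, and it then extends holomorphically across the zero-dimensional cusps by normality of $X(L)$ (Hartogs in codimension $\ge 2$). Thus $F$ descends to a global holomorphic function on the compact normal projective variety $X(L)$ and is therefore constant; hence $F$ itself is constant. The main obstacle is precisely the step $k=0$: unlike in signature $(2,n)$ with $n\ge 3$ there is no Koecher principle available here, so the argument must genuinely use the positivity of the automorphic line bundle on the Baily--Borel compactification (equivalently the one-variable valence formula), while handling the finite-order character and the cusp singularities of $X(L)$ when extending functions.
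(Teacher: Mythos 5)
Your argument is correct, and its skeleton (kill the character by passing to a power, force $k=0$, then conclude constancy from compactness of the Baily--Borel compactification) matches the paper's. The difference is in how $k=0$ is obtained. The paper's proof is a two-line reduction: since the divisor of $F$ vanishes on $X(L)$, the reciprocal $1/F$ is again a holomorphic modular form, now of weight $-k$, and one invokes the (standard but, in the absence of the Koecher principle, not completely trivial) fact that there are no nonzero holomorphic modular forms of negative weight. You instead prove the needed positivity from scratch: a nowhere-vanishing section trivializes the automorphic $\IQ$-line bundle, whose class is $k$ times the ample class coming from $X(L)=\operatorname{Proj}\bigoplus_m M_{m\kappa}$, and intersecting with a complete curve forces $k=0$. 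These are two faces of the same coin --- your trivialization of $\mathcal L_k$ \emph{is} the section $1/F$ of $\mathcal L_{-k}$ --- but your version is self-contained where the paper cites a known nonexistence result, and it is more careful about the points the paper glosses over: the extension of $F$ and $1/F$ across the zero-dimensional cusps via normality of $X(L)$ (Hartogs in codimension two), which is exactly what makes ``divisor vanishes on $X(L)$'' usable at the singular boundary points. Your flagged alternative via restriction to a boundary modular curve and the valence formula would also work and is the most elementary route, and you correctly identify its weak spot (zeros of the Fourier--Jacobi constant term at cusps of the boundary curve, which are codimension-two in $X(L)$). In short: same strategy, with the key input either quoted (paper) or proved via ampleness (you); either is acceptable, and yours costs a paragraph but removes an external dependency.
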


    \begin{proof}
        By replacing $F$ with a suitable power we may assume that $F$ has trivial character.
        Then $\frac{1}{F}$ is also a holomorphic modular form of weight $-k$. As there are no holomorphic modular forms of negative weight, this implies that $F = 0$ when $k \neq 0$. 
        When $k= 0$ this means that $F$ is constant.
    \end{proof}

    \begin{cor}\thlabel{cor:ModularUnitsThatAreBorcherdsProducts}
        Let $F : \IH \times \IH \to \IC$ be a modular unit of some weight and some character of finite order for $\Gamma(L)$. Then $F$ is a Borcherds product up to a constant if and only if it has a special boundary divisor. In particular, such a modular unit is a product of the eta-functions appearing in \thref{propModUnits-NN'} and the constructions of \cite{Bieker} indeed produce all modular units.
    \end{cor}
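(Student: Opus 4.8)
The plan is to deduce the equivalence from the isomorphism of \thref{prop:boundary-isotropic-descent} together with the rigidity statement \thref{lem:trivial-divisor-constant}, treating the two implications separately.

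For the forward implication, suppose $F = c\,\Psi_f$ for some $f \in M_{0,L}^!$ and $c \in \IC^\times$. Then \thref{thm:SpecialBoundaryDivisorsAsBorcherdsProducts} (which in turn rests on \thref{lem:boundary-mult-weyl-vect}) shows that the boundary divisor of $\Psi_f$ on $X(L)$ is special. Since $F$ is a modular unit, its entire divisor is concentrated on the boundary and coincides with that of $\Psi_f$; hence $\di(F)$ is a special boundary divisor, and this direction requires no further work.

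For the reverse implication, I would start from a modular unit $F$ whose boundary divisor $D = \di(F)$ lies in $\spbdiv(X(L))_\IQ$. By the isomorphism $\IQ[L'/L]_{\Types(L)} \xrightarrow{\cong} \spbdiv(X(L))_\IQ$ of \thref{prop:boundary-isotropic-descent}, there is an invariant vector $\frakv \in \IQ[L'/L]_{\Types(L)}$ with $\di_\partial \Psi_\frakv = D$. Because $\frakv$ is an invariant vector, hence a holomorphic input with no negative Fourier coefficients, \thref{thm:BorcherdsProductsWeaklyHolomorphic} gives that $\Psi_\frakv$ has trivial divisor on $Y(L)$; thus $\Psi_\frakv$ is itself a modular unit with divisor $D$ on $X(L)$. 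Forming the quotient $G = F / \Psi_\frakv$, I obtain a meromorphic modular form for $\Gamma(L)$ of some weight and character of finite order whose divisor on $X(L)$ is $D - D = 0$. By \thref{lem:trivial-divisor-constant} the function $G$ is constant; since $F \neq 0$ this constant is nonzero, so in particular the weights and characters of $F$ and $\Psi_\frakv$ must agree. Therefore $F = c\,\Psi_\frakv$ for some $c \in \IC^\times$, i.e.\ $F$ is a Borcherds product up to a constant.

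Finally, for the ``in particular'' I would unwind $\frakv$ as a linear combination of the characteristic functions $v^{H^*_{a,c}}$ of types and use the multiplicativity of the Borcherds lift: $\Psi_\frakv$ equals, up to a constant, the corresponding product of the lifts $\Psi^*_{a,c}$, each of which is given explicitly by the eta functions of \thref{propModUnits-NN'}. Hence every modular unit with special boundary divisor is, up to a constant, such a product of eta functions, and these are precisely the modular units produced by the constructions of \cite{Bieker}. I expect the only genuine subtlety to lie in the reverse direction, namely in ensuring that $\frakv$ can be chosen over $\IQ$ with $\Psi_\frakv$ a well-defined (fractional weight, finite order multiplier) modular form matching the weight and character of $F$; this, however, is forced a posteriori by \thref{lem:trivial-divisor-constant}, so the argument does not require controlling weights and characters in advance.
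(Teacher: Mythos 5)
Your proof of the main equivalence is correct and is essentially the paper's own argument: produce $\frakv$ with matching boundary divisor via \thref{thm:SpecialBoundaryDivisorsAsBorcherdsProducts}, divide, and apply \thref{lem:trivial-divisor-constant}. The only place you diverge is the final ``in particular'' clause: you invoke \thref{propModUnits-NN'} directly, which presupposes $L = L_{N,N'}$, whereas the corollary is stated for an arbitrary even lattice of signature $(2,2)$ and Witt rank $2$; the paper closes this gap by choosing a finite-index sublattice $L_{N,N} \subseteq L$ via \thref{lem:sublattice-LNN} and observing that $\Gamma(L_{N,N}) \subseteq \Gamma(L)$, so that $F$ is also a modular unit for $\Gamma(L_{N,N})$, where the explicit eta-product description applies.
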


    \begin{proof}
        Since the boundary divisor of $F$ is special, there is $\frakv \in \IQ[L' / L]^{\SL_2(\IZ)}$ such that $\Psi_\frakv(Z)$ has the same boundary divisor by \thref{thm:SpecialBoundaryDivisorsAsBorcherdsProducts}. Then the divisor of $F / \Psi_\frakv$ vanishes on $X(L)$. By \thref{lem:trivial-divisor-constant} we obtain the result. That $F$ is a product of eta-functions can be seen by choosing a finite index sublattice $L_{N, N} \subseteq L$ using \thref{lem:sublattice-LNN} and observing that $\Gamma(L_{N, N}) \subseteq \Gamma(L)$.
    \end{proof}

	In the two cases $L = L_{N,1}$ and $L = L_{p^r,p^r}$ we more explicitly determine the space of boundary divisors of Borcherds products $\spbdiv(X(L)) \subseteq \Div_\partial(X(L))_\IQ$.

    \begin{cor}
    \thlabel{cor:boundary-div-N1}
		Let $L = L_{N,1}$. 
        A boundary divisor on $X(L_{N,1})$ arises as a divisor of a Borcherds product if and only if it
		\begin{enumerate}
			\item is symmetric and 
			\item its multiplicity along $\{\frac{a}{c}\} \times \IH$ only depends on $\gcd(c,N)$.
		\end{enumerate}        
    \end{cor}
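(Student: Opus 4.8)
The plan is to read off both implications from \thref{thm:SpecialBoundaryDivisorsAsBorcherdsProducts}, which identifies the boundary divisors of Borcherds products with the space $\spbdiv(X(L))_\IQ$ spanned by the special boundary divisors $Z(H) = \sum_S \#(H \cap \type_L(S))[S]$, combined with the explicit description of the types $H^*_{a,c}$ from \thref{lem:isotropic-subgroups-cusps} specialized to $N' = 1$. For the ``only if'' direction, since conditions (1) and (2) cut out a linear subspace of $\Div_\partial(X(L))_\IQ$ and the $Z(H)$ span $\spbdiv(X(L))_\IQ$, it suffices to verify them for a single $Z(H)$. The multiplicity of $Z(H)$ along $\{a/c\}\times\IH$ is $\#(H\cap H^1_{a,c})$ and along $\IH\times\{a/c\}$ is $\#(H\cap H^2_{a,c})$, so both conditions reduce to statements about the subgroups $H^*_{a,c}$.

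First I would record the two structural facts that drive the proof. Set $M=\gcd(c,N)$; note that $N'=1$ forces $M=\gcd(N/N',c)$. Specializing \thref{lem:isotropic-subgroups-cusps} to $N'=1$, the coordinates living in $(\IZ/N'\IZ)^2=0$ drop out, and both $H^1_{a,c}$ and $H^2_{a,c}$ become the subgroup of $(\IZ/N\IZ)^2$ generated by $(Na/M,0)$ and $(0,c)$. This immediately yields $H^1_{a,c}=H^2_{a,c}$ (as already observed in the proof of \thref{lem:space-types}), hence the symmetry in condition (1). A short computation in $\IZ/N\IZ$ then identifies this subgroup as $\langle N/M\rangle\oplus\langle M\rangle$: since $\gcd(a,M)=1$, the element $(N/M)a$ has order $M$ and generates the unique subgroup of that order in the first factor, and writing $c=Mc'$ with $\gcd(c',N/M)=1$ shows that $c$ generates the unique subgroup of order $N/M$ in the second factor. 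Thus $H^1_{a,c}$ depends only on $M=\gcd(c,N)$, and therefore so does $\#(H\cap H^1_{a,c})$, giving condition (2).

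For the ``if'' direction I would argue by a dimension count. By condition (2) and the symmetry (1), the subspace of $\Div_\partial(X(L))_\IQ$ cut out by (1) and (2) is parametrized by one multiplicity for each value of $\gcd(c,N)$; every divisor $M\mid N$ is realized (take $c=M$, $a=1$, and $M=N$ by the cusp $\{\infty\}\times\IH$), so this subspace has dimension equal to the number of divisors $\prod_{p\mid N}(r_p+1)$. On the other hand, \thref{thm:SpecialBoundaryDivisorsAsBorcherdsProducts} with $N'=1$ gives $\dim\spbdiv(X(L))_\IQ=\prod_{p\nmid N'}(r_p+1)=\prod_{p\mid N}(r_p+1)$, the same number. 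Since the ``only if'' direction already places $\spbdiv(X(L))_\IQ$ inside the subspace cut out by (1) and (2), the equality of dimensions forces the two spaces to coincide, which is precisely the asserted equivalence.

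I expect the only genuine content to be the explicit identification $H^1_{a,c}=\langle N/M\rangle\oplus\langle M\rangle$ with $M=\gcd(c,N)$; once it is established, both implications are formal. This identification is essentially \cite[Corollary 4.3]{Bieker}, and it can also be read off from the first part of \thref{propModUnits}, whose eta-product $\eta(dz_1)\eta(dz_2)$ has argument $d=N/M$ manifestly depending only on $\gcd(c,N)$ and is symmetric in $z_1,z_2$. The only care needed is the bookkeeping of element orders in the cyclic group $\IZ/N\IZ$, which uses the coprimality $\gcd(a,c)=1$.
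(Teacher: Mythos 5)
Your proof is correct and follows essentially the same route as the paper: the paper's (very terse) argument is that conditions (1) and (2) exactly say the multiplicity depends only on the type of the cusp, which combined with \thref{thm:SpecialBoundaryDivisorsAsBorcherdsProducts} and the fact that for $N'=1$ there are no relations among the $v^{H^*_{a,c}}$ (so $\spbdiv(X(L))_\IQ$ is all of $\IQ^{\Types(L)}$) gives both implications. Your explicit identification $H^1_{a,c}=H^2_{a,c}=\langle N/M\rangle\oplus\langle M\rangle$ and the dimension count just unpack the same two ingredients (\thref{lem:isotropic-subgroups-cusps} and \thref{lem:space-types-dimension}) in more detail.
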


	\begin{cor}\thlabel{cor:dimension-boundary-divisors}
            Let $L = L_{p^r,p^r}$. Then the subspace $ \spbdiv(X(L_{p,p}))_\IQ \subset  \Div_\partial(X(L_{p,p}))_\IQ$ is characterised by the following conditions.
            The multiplicities
            \begin{enumerate} 
            \item for boundary components of the form $\{\frac{a}{c} \} \times \IH$ with $p \mid a$ only depend on $a c^{-1} \in \IZ/p^r\IZ$ and
            \item for boundary components of the form $\{\frac{a}{c} \} \times \IH$ with $\gcd(p,a) = 1$ only depend on $a^{-1} c \in \IZ/p^r\IZ$. 
        \end{enumerate}
        The corresponding conditions are also required for the boundary components of the form $\IH \times \{\frac{a}{c} \}$. 
        Moreover, 
        \begin{enumerate}[resume]
            \item
            \label{it:cond-boundary-pp:sum-multiplicities}
            the total sum of multiplicities of boundary components of the form $\{\frac{a}{c} \} \times \IH$ agrees with the sum of multiplicities of components of the form $\IH \times \{\frac{a}{c} \}$.
        \end{enumerate}
	\end{cor}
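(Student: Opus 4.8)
The plan is to prove the characterization by establishing both inclusions between $\spbdiv(X(L))_\IQ$ and the subspace of $\Div_\partial(X(L))_\IQ$ cut out by (1)--(4), and then to close the argument with a dimension count. First I would record that, by \thref{lem:boundary-mult-weyl-vect}, the multiplicity of the boundary divisor of any Borcherds product along a one-dimensional cusp depends only on the type of that cusp. For $L = L_{p^r,p^r}$ the explicit description in \thref{lem:isotropic-subgroups-cusps} shows that $\type_L(I^1_{a,c}) = H^1_{a,c}$ is unchanged when $(a,c)$ is scaled by a unit of $\IZ/p^r\IZ$, so it only depends on the class $[a:c]\in\IP^1(\IZ/p^r\IZ)$; concretely, when $p\mid a$ this class is determined by $ac^{-1}\in\IZ/p^r\IZ$ and when $\gcd(a,p)=1$ by $a^{-1}c\in\IZ/p^r\IZ$, and symmetrically for $\type_L(I^2_{a,c}) = H^2_{a,c}$. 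This is precisely conditions (1) and (2) together with their analogues for the components $\IH\times\{a/c\}$, so these conditions are necessary, and they identify the space of boundary divisors whose multiplicities factor through $\Types(L)$ with $\IQ^{\Types(L)}$.

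Next I would determine $\spbdiv(X(L))_\IQ$ as a subspace of $\IQ^{\Types(L)}$. By \thref{prop:boundary-isotropic-descent} the map $\di_\partial\circ\Psi(Z,-)$ sends an invariant vector $\frakv$ to the tuple $(\langle\frakv, v^H\rangle)_{H\in\Types(L)}$ and restricts to an isomorphism $\IQ[L'/L]_{\Types(L)}\xrightarrow{\cong}\spbdiv(X(L))_\IQ$; combined with \thref{lem:space-types-dimension} this yields $\dim\spbdiv(X(L))_\IQ = |\Types(L)|-1$, so that $\spbdiv(X(L))_\IQ$ is a hyperplane in $\IQ^{\Types(L)}$. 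Since the multiplicities are the values $H\mapsto\langle\frakv,v^H\rangle$ of the (symmetric) Gram pairing of the characteristic functions, the image of this map is the orthogonal complement, for the standard inner product on $\IQ^{\Types(L)}$, of the space of linear relations among the $v^H$. By \thref{lem:space-types-relation} that space is spanned by a single vector, so the hyperplane is cut out by the one linear functional dual to that relation.

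It then remains to identify this functional with condition (4). Here I would use that the unique relation of \thref{lem:space-types-relation} identifies, after collecting terms, $\sum_{H\in\Types^1}v^H$ with $\sum_{H\in\Types^2}v^H$, where $\Types^1$ and $\Types^2$ denote the types of the first- and second-factor cusps. Hence $\spbdiv(X(L))_\IQ$ consists exactly of those type-functions $m$ with $\sum_{H\in\Types^1}m(H) = \sum_{H\in\Types^2}m(H)$. To match this with the statement about geometric components, I would compute the one-dimensional cusps of $X(L)$: the image of $\Gamma(L)$ under projection to the first $\SL_2$-factor is $\pm\Gamma(p^r)$, so the first-factor cusps are the classes of $(a,c)$ modulo $\pm1 \bmod p^r$, and each type $H^1_{a,c}$ is the type of the same number of such cusps, independently of $H^1_{a,c}$ (and likewise for the second factor). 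Consequently the total sum of multiplicities over all components $\{a/c\}\times\IH$ equals this constant times $\sum_{H\in\Types^1}m(H)$, so condition (4) is equivalent to $\sum_{\Types^1}m(H) = \sum_{\Types^2}m(H)$.

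Finally I would conclude by the dimension count: conditions (1), (2), their analogues, and (4) cut out a subspace of $\IQ^{\Types(L)}$ of dimension $|\Types(L)|-1$, which contains $\spbdiv(X(L))_\IQ$ by the necessity established above and has the same dimension, hence equals it. The step I expect to be the main obstacle is the identification in the third paragraph: pinning down the unique linear relation among the characteristic functions of types and, in parallel, verifying that the number of one-dimensional cusps of a given type is independent of the type, so that summing multiplicities over geometric cusps agrees, up to a uniform factor, with summing over types. Everything else is either an immediate consequence of \thref{lem:boundary-mult-weyl-vect} and \thref{prop:boundary-isotropic-descent} or a bookkeeping of the explicit subgroups $H^*_{a,c}$ from \thref{lem:isotropic-subgroups-cusps}.
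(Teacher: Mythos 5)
Your argument is correct and is essentially the paper's own proof written out in full: the paper disposes of the corollary in two sentences, asserting that conditions (1), (2) and their analogues express that the multiplicity depends only on the type, and that condition (3) is the lift of the unique relation \eqref{lem:space-types-relation-eq} under the identification of \thref{prop:boundary-isotropic-descent} --- exactly your chain of reductions. The only point to watch is the one you yourself flagged: when you ``collect terms'' in \eqref{lem:space-types-relation-eq} for $N=N'=p^r$ the second and third sums as printed overlap on the types $H^*_{1,u}$ with $u$ a unit, and one must check (for instance by counting memberships of an element such as $(0,1,1,0)$ when $r=1$) that the true relation carries every \emph{distinct} type with coefficient $1$, as both your orthogonal-complement step and the unweighted sum in condition (3) require; likewise your observation that each type arises from the same number of geometric cusps is a genuine (if easy) verification that the paper leaves implicit.
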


   \begin{proof}[Proof of \thref{cor:boundary-div-N1} and \thref{cor:dimension-boundary-divisors}]
       In both cases conditions (1) and (2) say that the multplicity of a one-dimensional cusp only depends on its type. Moreover, \eqref{it:cond-boundary-pp:sum-multiplicities} is the lift of condition \eqref{lem:space-types-relation-eq}.
   \end{proof}

    \begin{rem}
        One can also determine the space of boundary divisors for $L = L_{N,N'}$ more directly by computing the boundary divisors of the $\eta$-products of \thref{propModUnits-NN'} using for example \cite[Proposition 2.1]{Koehler} and \cite[Proposition 6.2]{ScheithauerWeilRep}.
    \end{rem}

    \begin{rem} 
        By \thref{propModUnits-NN'}, the Borcherds lift $\Psi_{\frakv}$ for an invariant vector $\frakv \in \IC[L'/L]^{\SL_2(\IZ)}$ for $L = L_{N,N'}$ factors as $\Psi_\frakv(z_1, z_2) = \psi_1(z_1) \psi_2(z_2)$. Both components $\psi_1, \psi_2$ are modular forms for 
        $$\Gamma = \left\{\begin{pmatrix}
            a & b \\
            c & d
        \end{pmatrix} \in \SL_2(\IZ) \ \bigg \vert \ N \mid c, \ N' \mid b,  \ a \equiv d \equiv 1 \!\pmod{N'} \right\}.$$
        In particular, for $N' = 1$ we have $\Gamma = \Gamma_0(N)$ and for $N' = N$ we have $\Gamma = \Gamma(N)$.   
        Moreover, the divisors of the $\psi_i$ are supported on the cusps.
        Modular functions with divisors supported on the cusps are usually called modular units following \cite{KubertLangBook}.

        By \cite[Theorem 5]{Newman}, all modular units for $\Gamma_0(N)$ are $\eta$-products and appear as components of Borcherds lifts in \thref{propModUnits} (\ref{propModUnits-N1}).
        However, by \cite[Theorem 2.3]{KubertLang2} the space of boundary divisors generated by boundary divisors of modular units for $\Gamma(N)$ agrees with the full space of cuspidal divisors, a set of generators is given by Klein forms.
		For $N \geq 5$ the subspace of boundary divisors of $\eta$-quotients as in \thref{propModUnits} (\ref{propModUnits-NN}) is a proper subspace as the dimension is strictly smaller than the number of cusps. 
		This shows in particular that not all modular units for $\Gamma(N)$ arise as components of Borcherds products for $\Gamma(L_{N,N})$ in this case.
	\end{rem}

\renewcommand\bibname{References}
\bibliographystyle{alphadin}
\bibliography{./bibliography}

\end{document}